\newcommand{\strutstretchdef}{\newcommand{\strutstretch}{\vphantom{\raisebox{1pt}{$\big($}\raisebox{-1pt}{$\big($}}}}
\theoremstyle{plain}
\newtheorem{theorem}{Theorem}[section]
\newtheorem{lemma}[theorem]{Lemma}
\newtheorem{proposition}[theorem]{Proposition}
\newtheorem{corollary}[theorem]{Corollary}
\theoremstyle{definition}
\newtheorem{definition}[theorem]{Definition}
\newtheorem{example}[theorem]{Example}
\theoremstyle{remark}
\newtheorem{remark}[theorem]{Remark}
\numberwithin{equation}{section}
\newlength{\struh}
\newlength{\textminustop}
\newcommand*{\Ge}{\geqslant}
\newcommand*{\Le}{\leqslant}
\newcommand{\ncom}{\newcommand}
\ncom{\bq}{\begin{equation}}
\ncom{\eq}{\end{equation}}
\ncom{\beqn}{\begin{eqnarray*}}
\ncom{\eeqn}{\end{eqnarray*}}
\ncom{\beq}{\begin{eqnarray}}
\ncom{\eeq}{\end{eqnarray}}
\ncom{\nno}{\nonumber}
\ncom{\rar}{\rightarrow}
\ncom{\Rar}{\Rightarrow}
\ncom{\noin}{\noindent}
\ncom{\im}{{\rm Im\,}}
\ncom{\sgn}{{\rm sgn\,}}
\ncom{\eop}{\hfill{{\rule{2.5mm}{2.5mm}}}}
\ncom{\eof}{\hfill{{\rule{1.5mm}{1.5mm}}}}
\ncom{\inp}[2]{\langle{#1},\,{#2} \rangle}
\ncom{\Inp}[2]{\Langle{#1},\,{#2} \Langle}
\keywords{Hartogs triangle, domain of holomorphy, reproducing kernel, commutant, multiplier, Hardy space, Taylor spectrum, determinant operator, von Neumann's inequality, subnormality, tensor product}
\subjclass[2010]{Primary 47A13, 46E22; Secondary 32Q02, 32A10}
\begin{document}
\title[Operator theory on generalized Hartogs triangles]{Operator theory on generalized \\ Hartogs triangles}

\author[S. Chavan, S. Jain and P. Pramanick]{Sameer Chavan, Shubham Jain and Paramita Pramanick}

\address{Department of Mathematics and Statistics 
Indian Institute of Technology Kanpur, India}

\email{chavan@iitk.ac.in, shubjain@iitk.ac.in, paramitap@iitk.ac.in}

\thanks{The third named author was supported by NBHM Grant 0204-9-2022-R$\&$D-II-5885.} 


\begin{abstract} 
We consider the family $\mathcal P$ of $n$-tuples $P$ consisting of polynomials $P_1, \ldots, P_n$ with nonnegative coefficients which satisfy $\partial_i P_j(0) = \delta_{i, j},$  
$i, j=1, \ldots, n.$ 
With any such $P,$ we associate a Reinhardt domain $\triangle^{\!n}_{_P}$ that we will call the generalized Hartogs triangle. We are particularly interested in the choices 
$P_a = (P_{1, a}, \ldots, P_{n, a}),$ $a \Ge 0,$ where $P_{j, a}(z) = z_j + a \prod_{k=1}^n z_k,~ j=1, \ldots, n.$ The generalized Hartogs triangle associated with $P_a$ is given by 
\beqn
\triangle^{\!n}_a = \Big\{z \in \mathbb C \times  \mathbb C^{n-1}_* : |z_j|^2  < |z_{j+1}|^2(1-a|z_1|^2), ~j=1, \ldots, n-1, &&\\ 
 |z_n|^2 + a|z_1|^2 < 1\Big\}.
 \eeqn
The domain $\triangle^{\!2}_{_0}$ is the Hartogs triangle. Unlike most domains relevant to the multi-variable operator theory, the domain $\triangle^{\!n}_{_P},$ $n \Ge 2,$ is never polynomially convex. However, $\triangle^{\!n}_{_P}$ is always holomorphically convex. With any $P \in \mathcal P$ and $m \in \mathbb N^n,$ we associate a positive semi-definite kernel $\mathscr K_{_{P, m}}$ on $\triangle^{\!n}_{_P}.$ This combined with the Moore's theorem yields a reproducing kernel Hilbert space $\mathscr H^2_m(\triangle^{\!n}_{_P})$ of holomorphic functions on $\triangle^{\!n}_{_P}.$ We study the space $\mathscr H^2_m(\triangle^{\!n}_{_P})$ and the multiplication $n$-tuple $\mathscr M_z$ acting on $\mathscr H^2_m(\triangle^{\!n}_{_P}).$ It turns out that $\mathscr M_z$ is never rationally cyclic, but $\mathscr H^2_m(\triangle^{\!n}_{_P})$ admits an orthonormal basis consisting of rational functions on $\triangle^{\!n}_{_P}.$ Although 
the dimension of the joint kernel of $\mathscr M^*_z-\lambda$ is constant of value $1$ for every $\lambda \in \triangle^{\!n}_{_P}$, it has jump discontinuity at the serious singularity $0$ of the boundary of $\triangle^{\!n}_{_P}$ with value equal to $\infty.$ 
We capitalize on the notion of joint subnormality to define a Hardy space $\mathscr H^2(\triangle^{\!n}_{_0})$ on the $n$-dimensional Hartogs triangle $\triangle^{\!n}_{_0}.$ This in turn gives an analog of the von Neumann's inequality for $\triangle^{\!n}_{_0}.$
\end{abstract}

\maketitle

\tableofcontents

\section{Introduction, preliminaries and layout of the paper}
A motivation for the present investigations comes from some recent developments pertaining to the function theory and operator theory on the Hartogs triangle and related domains (see \cites{AMY2020, CJP2022, GGLV2021, M2021, P2019, T2022}). It is worth noting that the work carried out in \cites{GGLV2021, M2021} provides a framework to construct Hardy spaces on a class of domains that include the Hartogs triangle.  Another motivation for this paper comes from the works \cites{CV1993, MV1993, CV1995, A1998, P1999, P2019} providing a foundation for the functional calculus and model theory for commuting tuples of Hilbert space operators. The purpose of this paper is to discuss function theory of the so-called generalized Hartogs triangles and study a class of commuting operator tuples naturally associated with these domains. In particular, we construct Hardy-Hilbert spaces of holomorphic functions on the generalized Hartogs triangles and discuss operator theory on these spaces.  



Denote by $\mathbb Z, \mathbb Z_+$ and $\mathbb N,$ the sets of integers, nonnegative integers and positive integers, respectively. The set of real  numbers is denoted by $\mathbb R.$ 
We use the notation $\mathbb C$ (resp., $\mathbb C_*$) for the set of complex numbers (resp., nonzero complex numbers). 
Let $\mathbb T$ and $\mathbb D$ denote the unit circle and unit disc in the complex plane $\mathbb C,$ respectively. 
For $n \in \mathbb N$ and a nonempty set $A,$ let $A^n$ denote the $n$-fold Cartesian product of $A$ with itself. Let $\varepsilon_k$ denote the $n$-tuple in $\mathbb C^n$ with $k$-th entry equal to $1$ and zeros elsewhere. 
For $z =(z_1, \ldots, z_n) \in \mathbb C^n$ and $\alpha = (\alpha_1, \ldots, \alpha_n) \in \mathbb Z^n_+,$ $z^\alpha$ denotes the complex number $\prod_{j=1}^n z^{\alpha_j}_j.$ In what follows, we use the conventions that the product over an empty set is $1$ and the sum over an empty set is $0.$ For $\beta = (\beta_1, \ldots, \beta_n) \in \mathbb Z^n,$ we write $\alpha  \Le \beta$ if $\alpha_j \Le \beta_j$ for every $j=1, \ldots, n.$ 
For  $w =(w_1, \ldots, w_n) \in \mathbb C^n,$ the {\it Hadamard product} $z \diamond {w}$ of $z$ and $w$ is defined~by
\beq \label{H-prod}
z \diamond  {w} = (z_1{w}_1, \ldots, z_n{w}_n).
\eeq
For $a =(a_1, \ldots, a_n) \in \mathbb C^n$ and an $n$-tuple $r=(r_1, \ldots, r_n)$ of positive real numbers, let 
\beqn 
\mathbb D^n(a, r)&=&\{z \in \mathbb C^n : |z_j-a_j| < r_j, ~j=1, \ldots, n\}, \\  {\mathbb D}^n_*(a, r) &=& \{z \in \mathbb C^n : 0 <|z_j-a_j| < r_j, ~j=1, \ldots, n\}. 
\eeqn
For simplicity, we use the notation $\mathbb D(a, r)$ (resp. $\mathbb D_*(a, r)$) for $\mathbb D^1(a, r)$ (resp. $\mathbb D^1_*(a, r)$).  
Also, we use the notation 
$\mathbb D_*$ for ${\mathbb D}_*(0, 1).$ 
The ring of complex polynomials in the variables $z_1, \ldots, z_n$ is denoted by $\mathbb C[z_1, \ldots, z_n]$ or sometimes $\mathbb C[z]$ for brevity. 
For $Q \in \mathbb C[z],$ let 
\beq \label{Q-ball-new}
\mathbb B^n_{_{Q}}=\{z \in \mathbb C^n : |Q(z  \diamond  \overline{z})| < 1\},
\eeq 
where $\overline{z}=(\overline{z}_1, \ldots, \overline{z}_n)$ denotes the $n$-tuple of complex conjugates $\overline{z}_1, \ldots, \overline{z}_n$ of $z_1, \ldots, z_n.$  
Following \cite{P1999}, we refer to $\mathbb B^n_{_{Q}}$ as the {\it $Q$-ball}. For simplicity, we use the notation $\mathbb D_{_{Q}}$ for $\mathbb B^1_{_{Q}}.$ In general, the $Q$-ball $\mathbb B^n_{_{Q}},$ $n \Ge 2,$ is not bounded (e.g. $Q(z_1, z_2)=z_1$).

A {\it domain $\Omega$ in $\mathbb C^n$} is a nonempty open connected subset  of $\mathbb C^n.$  
The {\it absolute space} of a domain $\Omega$ in $\mathbb C^n$ is given by $\{ (|z_1|, \ldots, |z_n|) \in \mathbb R^n : z \in \Omega\}.$
By a {\it Reinhardt domain}, we understand a domain which is $\mathbb T^n$-invariant$:$
\beqn
  (\zeta_1 z_1, \ldots, \zeta_n z_n)\in \Omega~\mbox{whenever~} (z_1, \ldots, z_n) \in \Omega~\mbox{and~}(\zeta_1, \ldots, \zeta_n) \in \mathbb T^n.
\eeqn
A Reinhardt domain $\Omega$ is {\it complete} if for every $z \in \Omega,$ the closed polydisc centered at the origin and of the polyradius $(|z_1|, \ldots, |z_n|)$ is contained in $\Omega.$ 

\begin{lemma} \label{application-CS}
Let $Q$ be a polynomial in the complex variables $z_1, \ldots, z_n$ with nonnegative coefficients. If $Q(0)=0,$ then the $Q$-ball $\mathbb B^n_{_Q}$ is a complete Reinhardt domain. 
\end{lemma}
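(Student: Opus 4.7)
The plan is to unpack the defining inequality $|Q(z \diamond \overline z)|<1$ into something transparent, and then read off each required property from monotonicity.

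First I would observe that since $Q$ has nonnegative coefficients and $Q(0)=0$, one may write $Q(t_1,\dots,t_n)=\sum_{\alpha\ne 0} c_\alpha\, t^\alpha$ with $c_\alpha\Ge 0$. Plugging in $t=z\diamond\overline z=(|z_1|^2,\dots,|z_n|^2)\in\mathbb R^n_+$ yields
\beqn
Q(z\diamond\overline z) \;=\; \sum_{\alpha\ne 0} c_\alpha \prod_{j=1}^n |z_j|^{2\alpha_j} \;\Ge\; 0,
\eeqn
so the absolute value in the definition is cosmetic and the defining function depends only on $(|z_1|,\dots,|z_n|)$. From this single remark, $\mathbb T^n$-invariance is immediate: replacing $z_j$ by $\zeta_j z_j$ with $|\zeta_j|=1$ leaves $|z_j|$ unchanged.

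Next I would check the three remaining pieces. \emph{Nonempty and open:} $Q(0)=0<1$, so $0\in \mathbb B^n_{_Q}$; and $z\mapsto Q(z\diamond\overline z)$ being a real polynomial in $|z_1|^2,\dots,|z_n|^2$, hence continuous in $z$, shows $\mathbb B^n_{_Q}$ is open as the preimage of $(-\infty,1)$. \emph{Completeness:} let $z\in\mathbb B^n_{_Q}$ and let $w\in\mathbb C^n$ satisfy $|w_j|\Le|z_j|$ for every $j$. Since every $c_\alpha\Ge 0$, each monomial $\prod_j |w_j|^{2\alpha_j}\Le\prod_j|z_j|^{2\alpha_j}$, so
\beqn
0\Le Q(w\diamond\overline w)\Le Q(z\diamond\overline z)<1,
\eeqn
which places the entire closed polydisc of polyradius $(|z_1|,\dots,|z_n|)$ inside $\mathbb B^n_{_Q}$. \emph{Connectedness:} the same monotonicity, applied along the segment $t\mapsto tz$ for $t\in[0,1]$, shows $\mathbb B^n_{_Q}$ is star-shaped with respect to $0$; so it is path-connected, and combined with openness it is a domain.

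I do not expect a serious obstacle here; the only thing one has to be mildly careful about is that the hypothesis $Q(0)=0$ is used twice (once to ensure $0\in \mathbb B^n_{_Q}$, and implicitly to guarantee $Q$ is nonnegative on $\mathbb R^n_+$, although nonnegative coefficients alone suffice for the latter). Everything else reduces to the elementary fact that a polynomial with nonnegative coefficients is coordinatewise nondecreasing on $\mathbb R^n_+$, which is exactly what forces both completeness and star-shapedness.
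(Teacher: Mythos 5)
Your proof is correct and follows essentially the same route as the paper, which merely asserts that openness and $\mathbb T^n$-invariance are clear and that completeness follows from the nonnegativity of the coefficients; you have simply supplied the details (monotonicity on $\mathbb R^n_+$, star-shapedness for connectedness) that the paper leaves implicit.
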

\begin{proof}
Assume that $Q(0)=0.$ Clearly, $\mathbb B^n_{_Q}$ is open and $\mathbb T^n$-invariant. Since $Q$ has nonnegative coefficients, $\mathbb B^n_{_Q}$ is a complete Reinhardt domain. 
\end{proof}

%
%
For a bounded domain $\Omega$ in $\mathbb C^n,$ let $\mathscr H^{\infty}(\Omega)$ denote the Banach algebra of bounded holomorphic functions on $\Omega$ endowed with the sup norm $\|\cdot\|_{\infty, \Omega}.$

\subsection{The $n$-dimensional Hartogs triangle}

For a positive integer $n,$ consider the domain
\beqn
\triangle^{\!n}_0 =  \{z \in \mathbb C^n : |z_{1}|  < |z_2| < \ldots < |z_n| < 1\}.
\eeqn
We refer to $\triangle^{\!n}_0$ as 
the {\it $n$-dimensional Hartogs triangle}. Note that $\triangle^{\!1}_0$ is the open unit disc $\mathbb D,$ while $\triangle^{\!2}_0$ is commonly known as the {\it Hartogs triangle}. 
For $n \Ge 2,$
 consider the automorphism $\varphi$ of $\mathbb C \times \mathbb C^{n-1}_*$ given by 
\beq \label{tilde}
\varphi(z) = \Big(\frac{z_1}{z_2}, \ldots, \frac{z_{n-1}}{z_n}, z_n\Big), \quad 
z  =(z_1, \ldots, z_n) \in \mathbb C \times \mathbb C^{n-1}_*.
\eeq
Note that $\varphi^{-1}$ is given by
\beq \label{phi-inverse}
\varphi^{-1}(z) = \Big(\prod_{j=1}^n z_j,  \ldots, \prod_{j=n-1}^n  z_j, z_n\Big), \quad z =(z_1, \ldots, z_n) \in \mathbb C \times \mathbb C^{n-1}_*, 
\eeq
which, being a polynomial mapping, extends naturally to $\mathbb C^n.$
Note that $\varphi$ maps $\triangle^{\!n}_0$ biholomorphically onto $\mathbb D \times \mathbb D^{n-1}_*.$ In particular, $\triangle^{\!n}_0$ is a domain of holomorphy.  
It is easy to see that the {\it Jacobian} $J_{_{\varphi^{-1}}}$ of $\varphi^{-1}$ is given by
\beq \label{Jaco}
J_{_{\varphi^{-1}}}(z) = \prod_{j=2}^n z^{j-1}_j, \quad z=(z_1, \ldots, z_n) \in \mathbb C \times \mathbb C^{n-1}_*.
\eeq 

\subsection{Operator theoretic prerequisites}
For a complex Hilbert space $H,$ let $\mathcal B(H)$ denote the  $C^*$-algebra of bounded linear operators on $H.$ 
Let $\mathscr H$ be a Hilbert space of functions $f : \Omega \rar \mathbb C.$ 
A function $\phi : \Omega \rar \mathbb C$ is a said to be a {\it multiplier} of $\mathscr H$ if $\phi f \in \mathscr H$ for every $f \in \mathscr H.$ 
If $\mathscr H$ is a reproducing kernel Hilbert space (refer to \cite{PR2016} for the definition), then by the closed graph theorem, for every multiplier $\phi,$ the linear operator $\mathscr M_\phi(f)=\phi f,$ $f \in \mathscr H,$  defines a bounded linear operator on $\mathscr H,$ and hence  
the {\it multiplier algebra} of $\mathscr H$ is the subalgebra $\{\mathscr M_\phi : \phi ~\text{is a multiplier of}~\mathscr H\}$ of $\mathcal B(\mathscr H)$ (endowed with the operator norm).
For $A, B \in \mathcal B(H),$ the {\it commutator} of $A$ and $B,$ denoted by $[A, B],$ is the operator $AB-BA.$
By a {\it commuting $n$-tuple $T$ on $H$}, we understand the tuple $(T_1, \ldots, T_n)$ of commuting operators $T_1, \ldots, T_n$ in $\mathcal B(H).$
The joint kernel $\ker(T)$ of $T$ is given by $\cap_{j=1}^n \ker(T_j).$ 
In what follows, the symbols $\sigma_p(T),$ $\sigma(T)$ and $r(T)$ denote the point spectrum, the Taylor spectrum and the spectral radius of a commuting $n$-tuple $T$ on $H,$ respectively. It turns out that $\sigma(T)$ is a nonempty compact subset of $\mathbb C^n$ and $r(T)$ is a nonegative real number (see \cite{T1970}; the reader is referred to \cite{C1988} for a treatise on the multivariable spectral theory).

\subsection{Layout of the paper}

In Section~\ref{S2}, we introduce, in line with \cite{P1999}, the notion of positive regular polynomial $n$-tuple $P$ (see Definition~\ref{reg-def}) and associate a domain with it, which we call a $P$-triangle or a generalized Hartogs triangle (see \eqref{gen-H-tri} and Figure~\ref{fig}). The generalized Hartogs triangles are bounded Reinhardt domains. Also, 
if $P$ is an admissible polynomial $n$-tuple, then  
the associated $P$-triangle is a product domain up to a biholomorphism (see Proposition~\ref{DoH}). 
The generalized Hartogs triangles are domains of holomorphy (see Proposition~\ref{always-DoH}). However, these are neither polynomially convex nor hyperconvex (see Corollary~\ref{coro-h-convex}). For the choice $P=(z_1, \ldots, z_n),$ the $P$-triangle is nothing but the $n$-dimensional Hartogs triangle (see Examples~\ref{exm-triangle} and \ref{exam-Delta-P-r}). 

In Section~\ref{S3}, with a positive regular polynomial $n$-tuple $P$ and $m \in \mathbb N^n,$ we associate a scalar-valued positive semi-definite kernel  
$\mathscr K_{_{P, m}}$ on $\triangle^{\!n}_{_P}$ (see \eqref{exp-2}). This combined with the Moore's theorem yields a Hardy-Hilbert space $\mathscr H^2_m(\triangle^{\!n}_{_P})$ of holomorphic functions on the generalized Hartogs triangle $\triangle^{\!n}_{_P}.$ The space $\mathscr H^2_m(\triangle^{\!n}_{_P})$ admits an orthonormal basis consisting of rational functions (see Proposition~\ref{on-basis}).
We also provide a simplified form of the reproducing kernel $\mathscr K_{a, m}$ of the Hilbert space $\mathscr H^2_m(\triangle^{\!n}_{a})$ (see Example~\ref{exm-triangle-2}). 

In Section~\ref{S4}, we show that $z_1, \ldots, z_n$ are multipliers of $\mathscr H^2_m(\triangle^{\!n}_{_P})$ (see Proposition~\ref{bddness}). In particular,  
the multiplication $n$-tuple $\mathscr M_z$ defines a commuting $n$-tuple on $\mathscr H^2_m(\triangle^{\!n}_{_P}).$  We also provide an upper bound for the operator norm $\|\mathscr M_{z_j}\|$ of $\mathscr M_{z_j},$ $j=1, \ldots, n$ (see \eqref{norm-estimate}).  
In case $P$ is $n$-admissible, we provide a lower bound for $\|\mathscr M_{z_j}\|,$ $j=1, \ldots, n$ 
(see Corollary~\ref{norm-attained}). 
We also introduce the notion of the unilateral weighted multishift associated with $\mathscr M_z$ and discuss some  applications to the spectral theory (see Remark~\ref{after-action-adjoint} and Proposition~\ref{spectral-coro-new}).

In Section~\ref{S5}, we obtain several basic properties of the multiplication $n$-tuple $\mathscr M_z$ on $\mathscr H^2_m(\triangle^{\!n}_{_P}).$ In particular, 
we show that $\mathscr M_z$ fails to be doubly commuting and finitely rationally cyclic (see Proposition~\ref{not-F}). Moreover, we exhibit a family of positive regular polynomial $n$-tuples $P$ for which  $\mathscr M_{z_n}$ on $\mathscr H^2_m(\triangle^{\!n}_{_P})$ is not essentially normal (see Proposition~\ref{not-en} and Corollary~\ref{coro-not-en}). Further, we show that  
$\mathscr M_z$ on $\mathscr H^2_m(\triangle^{\!n}_{_P})$ is always strongly circular (see Proposition~\ref{p-circular}). As a consequence, we show that the Taylor spectrum of $\mathscr M_z$ is a rotation invariant connected subset of $\mathbb C^n$ that contains $\overline{\triangle^{\!n}_{_P}}$ (see Corollary~\ref{circular-spectral}). Furthermore, under the assumption that $P$ is an admissible $2$-tuple, we characterize the trace-class membership of the positive determinant operator of the multiplication $2$-tuple $\mathscr M_z$ on $\mathscr H^2_m(\triangle^{\!2}_{_P})$ and provide a formula for its trace (see Proposition~\ref{B-S-Hartogs}). Also, we introduce the class of $\triangle^{\!n}_{_P}$-contractions of order $m$ and show that $\mathscr M^*_z$ on $\mathscr H^2_m(\triangle^{\!n}_{_P})$ has this property provided  $\frac{1}{\mathscr K_{_{P, m}}}$ is a hereditary polynomial (see Proposition~\ref{positivity-prop}). In particular, $\mathscr M^*_z$ on $\mathscr H^2_{m}(\triangle^{\!n}_{_a})$ is a $\triangle^{\!n}_{_a}$-contraction of order $m$ (see Corollary~\ref{Coro-H-contraction}). 

In Section~\ref{S6}, we show that the commutant of $\mathscr M_z$ is a maximal abelian subalgebra of $\mathcal B(\mathscr H^2_m(\triangle^{\!n}_{_P})),$ it coincides with the multiplier algebra of $\mathscr H^2_m(\triangle^{\!n}_{_P})$ and it is contractively contained in $\mathscr H^\infty(\triangle^{\!n}_{_P})$ (see Theorem~\ref{mult-h-infty}). As a consequence, we show that $\mathscr M_z$ is irreducible (see Corollary~\ref{irrducible}). An important step in the proof of Theorem~\ref{mult-h-infty} needs the explicit computation of the joint kernel of $\mathscr M^*_z - w,$ $w \in \triangle^{\!n}_{_P} \cup \{0\}$ (see Lemma~\ref{spectral-lem}). It turns out that $w \mapsto \dim  \ker (\mathscr M^*_z - w)$ is constant $1$ on $\triangle^{\!n}_{_P},$ yet it has a jump discontinuity at the origin with value equal to $\infty.$ These facts allow us to  show that the point spectrum of $\mathscr M^*_z$ on $\mathscr H^2_m(\triangle^{\!n}_{_P})$ intersects $\mathbb C \times \mathbb C^{n-1}_*$ precisely at the points in  $\triangle^{\!n}_{_P}$ (see Theorem~\ref{theorem-pt-spectrum}). Interestingly, in case of $n=2,$ the point spectrum of $\mathscr M^*_z$ is equal to $\triangle^{\!2}_{_P} \cup \{0\}$ (see Corollary~\ref{pt-spec-in-2}).

In Section~\ref{S7}, we characterize jointly subnormal $n$-tuples $\mathscr M_z$ on $\mathscr H^2_m(\triangle^{\!n}_{_P})$ in terms of the positive regular polynomial $n$-tuples $P$ and $m \in \mathbb N^n$ (see Proposition~\ref{joint-s}). If, in addition, $P$ is an admissible $n$-tuple, 
then this characterization becomes more handy (see Corollary~\ref{coro-subnormal}). This combined with a solution of a Hausdorff moment problem (see Example~\ref{exm-triangle-3}) helps us define a Hardy space of the $n$-dimensional Hartogs triangle $\triangle^{\!n}_{0}$ (see Proposition~\ref{norm-Hardy}). In turn, this yields von Neumann's inequality for $\triangle^{\!n}_{0}$ (see Theorem~\ref{vn-H}). As a consequence, we show that for any $\triangle^{\!n}_{_0}$-contraction $T=(T_1, \ldots, T_n)$ on $H$ such that $\sigma(T) \subset \triangle^{\!n}_{_{0}}$ satisfies $T^*_1T_1 \Le \ldots \Le T^*_nT_n \Le I$ (see Corollary~\ref{coro-v-N-i}). 

In Section~\ref{S8}, we associate with every admissible $n$-tuple $P$ and $m \in \mathbb N^n,$ a Hilbert space $\mathscr H^2_{m}( \mathbb D^n_{_{\widetilde{P}}})$ of holomorphic functions on the polydisc $\mathbb D^n_{_{\widetilde{P}}}$ and disclose its intimate relation with $\mathscr H^2_m(\triangle^{\!n}_{_P})$ (see Proposition~\ref{Mz-tilde-Mz}). The notions of separate subnormality and joint subnormality coincide for the multiplication $n$-tuples $\widetilde{\mathscr M}_{z}$ on $\mathscr H^2_{m}( \mathbb D^n_{_{\widetilde{P}}}),$ and in particular, this fact yields a handy criterion for the joint subnormality of $\mathscr M_z$ on $\mathscr H^2_m(\triangle^{\!n}_{_P})$
(see Corollary~\ref{sep-joint-sub}). 
Moreover, we capitalize on the work \cite{CV1978} together with Proposition~\ref{Mz-tilde-Mz} to compute the Taylor spectrum of $\mathscr M_z$ on $\mathscr H^2_m(\triangle^{\!n}_{_P})$ provided $P$ is an admissible $n$-tuple (see Proposition~\ref{prop-spectrum}). If, in addition, $\mathscr M_z$ is separately hyponormal, then the Taylor spectrum of $\mathscr M_z$  is equal to $\overline{\triangle^{\!n}_{_{P_{\bf r}}}},$ where ${\bf r}=\varphi(\|\mathscr M_{z_1}\|, \ldots, \|\mathscr M_{z_n}\|)$ (see Corollary~\ref{spec-precise}). 

In Section~\ref{S9}, we discuss the Pick's interpolation for the Hartogs triangle and show how it can be deduced from the corresponding result for the bidisc (see Proposition~\ref{Pick}). Next, we present several examples of $\triangle^{\!n}_{_0}$-isometries (see Example~\ref{last-ex}) and discuss the connection between the $\triangle^{\!n}_{_0}$-isometries and toral $n$-isometries (see \eqref{connection}).  
We conclude the paper with some possible directions for future work.

\section{Generalized Hartogs triangles} \label{S2}

Before we introduce the so-called generalized Hartogs triangles, we need a definition. 
\begin{definition} \label{reg-def}
{\it For a positive integer $n,$ let $P_1, \ldots, P_n \in \mathbb C[z_1, \ldots, z_n].$
The polynomial $n$-tuple $P=(P_1, \ldots, P_n)$ is said be {\it positive regular} if for every $j=1, \ldots, n,$ 
\begin{enumerate}
\item $P_j$ has nonnegative coefficients and
\item $P_j$ has the expression of the form 
\allowdisplaybreaks
\beqn P_j(z)= a_jz_j + \sum_{k = 2}^{N_j} Q_{jk}(z), 
\eeqn for some $N_j \in \mathbb N,$ $a_j > 0$ and a homogeneous polynomial $Q_{jk}$ of degree $k.$    
\end{enumerate}
Let $d \in \mathbb N$ and assume that $n \Ge 2.$ 
A positive regular polynomial $n$-tuple $P$ is said to be {\it $d$-admissible} if for every $j=1, \ldots, n,$
\beqn 
(\partial^{\alpha} P_j)(0) =0, \quad \alpha \in \mathbb Z^n_+, ~|\alpha| \Le d, ~\alpha \neq m\varepsilon_j, ~m =1, \ldots, d,
\eeqn
where $\partial^\alpha=\partial^{\alpha_1}_1 \cdots \partial^{\alpha_n}_n$ denotes the $\alpha^{\mbox{\tiny th}}$ mixed partial derivative. We say that $P$ is {\it admissible} if it is $d$-admissible with  
$d=\max\{\deg P_1, \ldots, \deg P_n\},$ where $\deg$ denotes the degree of a polynomial.} 
\end{definition}
\begin{remark} \label{rmk-d-admissible}
Let $P=(P_1, \ldots, P_n)$ be any polynomial $n$-tuple. A routine argument using the Taylor series expansion around the origin shows that  
$P$ is $d$-admissible if and only if for every $j=1, \ldots, n,$
 $P_j$ has nonnegative coefficients, $\partial_j P_j(0) > 0$ and 
\beq \label{eqn-d-admissible}
P_j(z) = \sum_{k=1}^d \frac{(\partial^k_j P_j)(0)}{k!}z^k_j + \displaystyle \sum_{k = d+1}^{N_j} Q_{jk}(z), 
\eeq
where $N_j \in \mathbb N$ and $Q_{jk}$ is a homogeneous polynomial of degree $k.$ 
Thus,  
\beq 
\label{P-admissible-rmk}
\mbox{$P$ is admissible $\Longleftrightarrow$}~
P_j(z) = \sum_{k=1}^{\deg P_j} \frac{(\partial^k_j P_j)(0)}{k!}z^k_j. 
\eeq
For $l \in \mathbb N,$ let $\mathcal P_l$ denote the set of $l$-admissible polynomial $n$-tuples. Clearly,   
$$\mathcal P_l = \mathcal P_k \subseteq \mathcal P_{k-1} \subseteq  \cdots\subseteq \mathcal P_1,$$ where $l$ is any integer bigger than $k=\max\{\deg P_1, \ldots, \deg P_n\}.$
\hfill $\diamondsuit$
\end{remark}

Motivated by the construction in \cite[Eqn~(2.2)]{P1999}, one can 
associate the Reinhardt domain $\triangle^{\!n}_{_P}$ in $\mathbb C^n,$ $n \Ge 2,$ with each positive regular polynomial $n$-tuple $P:$
\beq \label{gen-H-tri}
\triangle^{\!n}_{_P} = \Big\{z \in \mathbb C \times  \mathbb C_*^{n-1} : P_j\Big(\varphi(z) \diamond \overline{\varphi(z)}\Big) <1, ~j=1, \ldots, n\Big\},
\eeq
where $\varphi$ is given by \eqref{tilde} and $\diamond$ is the Hadamard product as defined in \eqref{H-prod}.
We refer to $\triangle^{\!n}_{_P}$ as the {\it $P$-triangle}. If there is no role of a particular choice of $P,$ then we refer to $\triangle^{\!n}_{_P}$ as a {\it generalized Hartogs triangle}
\footnote{This term, used here for convenience only, should not be confused with the literature usage for various Hartogs-like domains, which include fat and thin Hartogs triangles.}. 


Unlike the case of the $Q$-ball (see Lemma~\ref{application-CS}), the $P$-triangle is never a complete domain (since it does not contain the origin).
In the following proposition, we collect some geometric  properties of generalized Hartogs triangles.   
\begin{proposition} 
\label{DoH}
Let $P=(P_1, \ldots, P_n)$ be a positive regular polynomial $n$-tuple. The $P$-triangle $\triangle^{\!n}_{_P}$ is a bounded Reinhardt domain contained in $\cap_{j=1}^n \varphi^{-1}(\mathbb B^n_{_{P_j}}),$ where $\varphi$ is given by \eqref{tilde}. 
Moreover, if $\widetilde{P_j}$ is the polynomial in the complex variable $w$ given by 
\beq \label{tilde-Pj-new}
\widetilde{P_j}(w)=P_j(w \varepsilon_j), \quad  j=1, \ldots, n,
\eeq
and $\mathbb D^*_{_{\widetilde{P}_j}}  := \mathbb D_{_{\widetilde{P}_j}} \backslash \{0\},$ then  
the following statements hold$:$
\begin{enumerate}
\item[$\mathrm{(i)}$] $\triangle^{\!n}_{_P} \subseteq \varphi^{-1}  \left(\mathbb D_{_{\widetilde{P}_1}} \times \mathbb D^*_{_{\widetilde{P}_2}} \times \cdots \times \mathbb D^*_{_{\widetilde{P}_n}}\right),$
\item[$\mathrm{(ii)}$] $P$ is admissible if and only if 
\beqn
\triangle^{\!n}_{_P} =\varphi^{-1}  \Big(\mathbb D_{_{\widetilde{P}_1}} \times \mathbb D^*_{_{\widetilde{P}_2}} \times \cdots \times \mathbb D^*_{_{\widetilde{P}_n}}\Big).
\eeqn 
\end{enumerate}
\end{proposition}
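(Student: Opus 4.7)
The plan is to begin by noting that since each $P_j$ has nonnegative coefficients and $w \diamond \overline{w}$ has nonnegative entries for every $w \in \mathbb{C}^n,$ the quantity $P_j(w \diamond \overline{w})$ is a nonnegative real, so the defining inequalities of $\triangle^{\!n}_{_P}$ coincide with $\varphi(z) \in \mathbb{B}^n_{_{P_j}}$ for each $j.$ This immediately yields $\triangle^{\!n}_{_P} = \cap_{j=1}^n \varphi^{-1}(\mathbb{B}^n_{_{P_j}}).$ Openness is by continuity and $\mathbb{T}^n$-invariance follows from $|\varphi(\zeta \diamond z)_k| = |\varphi(z)_k|$ for every $\zeta \in \mathbb{T}^n.$ For connectedness, I would pass to the image $\widetilde{\triangle^{\!n}_{_P}} := \varphi(\triangle^{\!n}_{_P}) = \Omega \cap (\mathbb{C} \times \mathbb{C}^{n-1}_*),$ where $\Omega := \{w \in \mathbb{C}^n : P_j(w \diamond \overline{w}) < 1~\text{for all}~j\},$ and observe that $\Omega$ is star-shaped at the origin (since $P_j(t^2 v) \Le P_j(v)$ for $t \in [0,1]$ and $v \Ge 0,$ as the coefficients are nonnegative), hence connected; removing the complex hyperplanes $\{w_j = 0\}$ for $j = 2, \ldots, n,$ each of real codimension~$2,$ preserves connectedness.

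For part (i), the key observation is the monotonicity $P_j(u) \Ge P_j(u_j \varepsilon_j) = \widetilde{P_j}(u_j),$ valid for any $u = (u_1, \ldots, u_n)$ with nonnegative entries, because the omitted monomials have nonnegative coefficients at nonnegative arguments. Specializing to $u = \varphi(z) \diamond \overline{\varphi(z)}$ yields $\widetilde{P_j}(|\varphi(z)_j|^2) \Le P_j(u) < 1,$ i.e., $\varphi(z)_j \in \mathbb{D}_{_{\widetilde{P}_j}};$ for $j \Ge 2$ the condition $\varphi(z)_j \neq 0$ is automatic from $z \in \mathbb{C} \times \mathbb{C}^{n-1}_*.$ This also delivers boundedness: each $\mathbb{D}_{_{\widetilde{P}_j}}$ is an open disc of finite radius (since $\widetilde{P_j}$ is strictly increasing from $0$ to $\infty$ on $[0,\infty)$), and the explicit product formula \eqref{phi-inverse} shows $\varphi^{-1}$ maps this bounded polydisc-like set to a bounded one. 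The forward direction of (ii) is then immediate: when $P$ is admissible, \eqref{P-admissible-rmk} gives $P_j(w) = \widetilde{P_j}(w_j),$ so $P_j(\varphi(z) \diamond \overline{\varphi(z)}) = \widetilde{P_j}(|\varphi(z)_j|^2)$ and the defining inequalities of $\triangle^{\!n}_{_P}$ reduce to $\varphi(z)_j \in \mathbb{D}_{_{\widetilde{P}_j}};$ combined with $\varphi(z)_j \neq 0$ for $j \Ge 2$ (automatic from $z \in \mathbb{C} \times \mathbb{C}^{n-1}_*$), this yields the reverse inclusion to~(i).

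The main obstacle is the converse of~(ii): deducing admissibility from the set equality. Let $r_j > 0$ be the unique positive number with $\widetilde{P_j}(r_j^2) = 1,$ so that $\mathbb{D}_{_{\widetilde{P}_j}} = \mathbb{D}(0, r_j).$ For any $u$ with $u_1 \in [0, r_1^2)$ and $u_k \in (0, r_k^2)$ for $k \Ge 2,$ the point $w = (\sqrt{u_1}, \ldots, \sqrt{u_n})$ lies in $\mathbb{D}_{_{\widetilde{P}_1}} \times \mathbb{D}^*_{_{\widetilde{P}_2}} \times \cdots \times \mathbb{D}^*_{_{\widetilde{P}_n}},$ so by the assumed equality $\varphi^{-1}(w) \in \triangle^{\!n}_{_P},$ which forces $P_j(u) < 1;$ passing to the closure gives $P_j(u) \Le 1$ on the closed box $\prod_{k=1}^n [0, r_k^2].$ I would then decompose $P_j(u) = \widetilde{P_j}(u_j) + R_j(u),$ where $R_j$ collects the monomials of $P_j$ involving some $u_k$ with $k \neq j;$ both summands are nonnegative on the nonnegative orthant. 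Since $\widetilde{P_j}(r_j^2) = 1$ and $P_j(r_1^2, \ldots, r_n^2) \Le 1,$ one must have $R_j(r_1^2, \ldots, r_n^2) = 0;$ but $R_j$ has nonnegative coefficients and is evaluated at a point with strictly positive entries, which forces $R_j \equiv 0.$ Thus $P_j$ depends only on its $j$-th variable, which is exactly the admissibility criterion \eqref{P-admissible-rmk}.
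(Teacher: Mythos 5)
Your proof is correct and, apart from the converse of (ii), follows essentially the same route as the paper: your monotonicity inequality $P_j(u) \Ge \widetilde{P_j}(u_j)$ on the nonnegative orthant is exactly the paper's identity \eqref{Pj-Pj-tilde}, and connectedness is obtained by the same device of passing to $\varphi(\triangle^{\!n}_{_P})$ and deleting the hyperplanes $\{w_j=0\}$ (you justify connectedness of the ambient set by star-shapedness at the origin rather than by invoking completeness of the Reinhardt domain $\bigcap_j \mathbb B^n_{_{P_j}}$ as in Lemma~\ref{application-CS} --- both work). Where you genuinely add something is the converse of (ii), which the paper dismisses with ``it is easy to see'': your argument --- push the assumed set equality to the closed box $\prod_k[0,r_k^2]$, split $P_j=\widetilde{P_j}+R_j$ into pieces that are nonnegative on the nonnegative orthant, and use $\widetilde{P_j}(r_j^2)=1$ together with strict positivity of the corner point to force $R_j\equiv 0$ --- is a clean and complete justification of that step. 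One overstatement to correct: the asserted equality $\triangle^{\!n}_{_P}=\bigcap_{j=1}^n\varphi^{-1}(\mathbb B^n_{_{P_j}})$ is false, since the right-hand side (the image of each $\mathbb B^n_{_{P_j}}$ under the polynomial extension of $\varphi^{-1}$ to all of $\mathbb C^n$) contains the origin, which does not lie in $\triangle^{\!n}_{_P}\subseteq\mathbb C\times\mathbb C^{n-1}_*$; only the inclusion $\subseteq$ holds, which is all the proposition claims and all your argument actually establishes.
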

\begin{proof} Clearly, $\triangle^{\!n}_{_P}$ is an open subset of $\mathbb C^n.$ Since $\lim_{w \rar 0} P_j(w \diamond \overline{w})=0,$ by the continuity of $P,$ $\triangle^{\!n}_{_P}$ is nonempty. 
To see that $\triangle^{\!n}_{_P}$ is bounded,
let 
$z \in \triangle^{\!n}_{_P}$ and set $w=(w_1, \ldots, w_n)=\varphi(z).$ By \eqref{gen-H-tri}, $|w_j| < 1/{\sqrt{a_j}}$ for every $j=1, \ldots, n.$
This combined with $z_j= \prod_{l=j}^{n}w_l$ (see \eqref{phi-inverse}) shows that  
$$|z_j| < \prod_{l=j}^{n}\frac{1}{\sqrt{a_l}}, \quad j=1, \ldots, n,$$ and hence  
 $\triangle^{\!n}_{_P}$ is bounded. 
To see the connectedness of $\triangle^{\!n}_{_P} ,$ note that  
\beqn 
\varphi(\triangle^{\!n}_{_P}) 
&=& \{w \in \mathbb C \times  \mathbb C_*^{n-1} : P_j(w \diamond \overline{w}) <1, ~j=1, \ldots, n\}.
\eeqn
It now follows from \eqref{Q-ball-new} that
\beq \label{inter-minus-zero}
\varphi(\triangle^{\!n}_{_P}) = \Big(\bigcap_{j=1}^n \mathbb B^n_{_{P_j}}\Big) \setminus Z(\prod_{j=2}^n z_j),
\eeq
where $Z(\cdot)$ denotes the zero set. 
This shows that $\triangle^{\!n}_{_P} \subseteq \cap_{j=1}^n \varphi^{-1}(\mathbb B^n_{_{P_j}}).$  
By Lemma~\ref{application-CS}, 
$\bigcap_{j=1}^n \mathbb B^n_{_{P_j}}$ is a complete Reinhardt domain, and hence it is connected. This 
together with \cite[Corollary~4.2.2]{S2005} and \eqref{inter-minus-zero} now shows that $\varphi(\triangle^{\!n}_{_P})$ is connected. Since $\varphi^{-1}$ is continuous on $\mathbb C^n,$ $\triangle^{\!n}_{_P}$ is also connected. This together with the $\mathbb T^n$-invariance of $\triangle^{\!n}_{_P}$ completes the verification of the first part.  

(i) Let $Q_{jk}$ be as in Definition~\ref{reg-def}, and write
\beqn
Q_{jk}(z)=\sum_{\substack{\alpha \in \mathbb Z^n_+ \\ |\alpha|=k}}a_{_{\alpha, jk}}z^\alpha, \quad j=1, \ldots, n, ~k \Ge 2,
\eeqn
where $a_{\alpha, jk}$ are nonnegative real numbers. 
Note that for any $j=1, \ldots, n$ and $w \in \mathbb C^n,$
\allowdisplaybreaks
\beq 
\label{Pj-Pj-tilde}
P_j(w \diamond \overline{w}) - \widetilde{P_j}(|w_j|^2) &=& 
\sum_{k = 2}^{N_j} \Big(Q_{jk}(w \diamond \overline{w}) - Q_{jk}(|w_j|^2 \varepsilon_j)\Big) \notag \\
&=& \sum_{k = 2}^{N_j} \sum_{\substack{\alpha \neq k\varepsilon_j \\ \alpha \in \mathbb Z^n_+, \, |\alpha|=k}}a_{_{\alpha, jk}}|w^\alpha|^2.
\eeq
This together with \eqref{inter-minus-zero} implies that 
\beq 
\label{equality-holds-new}
\varphi(\triangle^{\!n}_{_P}) \subseteq \mathbb D_{_{\widetilde{P}_1}} \times \mathbb D^*_{_{\widetilde{P}_2}} \times \cdots \times \mathbb D^*_{_{\widetilde{P}_n}}. 
\eeq 
This yields (i). 

(ii) If $P$ is admissible, then 
by \eqref{P-admissible-rmk}, $P_j(w \diamond \overline{w}) = \widetilde{P_j}(|w_j|^2)$, and hence 
equality holds in \eqref{equality-holds-new}. Conversely, if equality holds in \eqref{equality-holds-new}, then it is easy to see from \eqref{inter-minus-zero} and \eqref{Pj-Pj-tilde} that $P_j(w \diamond \overline{w}) = \widetilde{P_j}(|w_j|^2).$ Hence, by \eqref{P-admissible-rmk}, $P$ is admissible, which completes  the proof.
\end{proof}
\begin{remark} 
For a positive integer $n$ and a positive regular polynomial $n$-tuple $P=(P_1, \ldots, P_n),$ let
$\widetilde{P}_j$ be as defined in \eqref{tilde-Pj-new}. 
For $j=1, \ldots, n,$ consider the function $G_j : [0, \infty) \rar [0, \infty)$ given by
\beqn
G_j(t) =
\widetilde{P}_j(t)=a_j t +\displaystyle \sum_{k = 2}^{N_j} Q_{jk}(t\varepsilon_j), \quad t \in [0, \infty). 
\eeqn
Since the coefficients of $\widetilde{P}_j$ are nonnegative and $a_j > 0,$  $G_j$ 
is a one-to-one function. It follows that 
\beq \label{Q-disc-is-disc}
\mathbb D_{_{\widetilde{P}_j}} = \mathbb D(0,  r_j),~\mbox{where $r_j=\sqrt{G^{-1}_j(1)},$ $j=1, \ldots, n.$}
\eeq
This combined with Proposition~\ref{DoH}(ii) shows that if $P$ is admissible, then $$\varphi(\triangle^{\!n}_{_P})=\mathbb D(0,  r_1) \times \mathbb D_*(0,  r_2) \times \cdots \times \mathbb D_*(0,  r_n).$$ 
Moreover, \eqref{Q-disc-is-disc} together with Proposition~\ref{DoH}(i) and \eqref{phi-inverse} gives another verification of the boundedness of $\triangle^{\!n}_{_P}.$   
\hfill $\diamondsuit$
\end{remark}

Let $\Omega$ be a domain in $\mathbb C^n$ and let 
$K$ be a compact subset of $\mathbb C^n.$ Recall that the monomial convex hull 
$\hat{K}_\Omega$ of  $K$ in $\Omega$ is given by 
$$\hat{K}_\Omega=\Big\{w \in \Omega : |w^{\alpha}| \, \Le \, \sup_{z \in K}\,|z^{\alpha}|~\mbox{for every~}\alpha \in \mathbb Z^n_+\Big\}.$$ 
Contrary to the cases of unit ball, polydisc or symmetrized polydisc (see \cite[Lemma~2.1]{AY1999}), the domain $\triangle^{\!n}_{_P}$ is never monomially convex. To see this, note that since $\triangle^{\!n}_{_P}$ is a Reinhardt domain, if $w \in \triangle^{\!n}_{_P},$ then  $\triangle^{\!n}_{_P}$ contains the compact set $K:=\{z \in \mathbb C^n : |z_j|=|w_j|,~j=1, \ldots, n\}.$ On the other hand, $\hat{K}_{\triangle^{\!n}_{_P}}=\{z \in \triangle^{\!n}_{_P} : |z_j| \Le |w_j|, ~j=1, \ldots, n\},$ which is clearly not closed in $\mathbb C^n$ (it does not contain the origin, which is a limit point of $\hat{K}_{\triangle^{\!n}_{_P}}$). 
Interestingly, the generalized Hartogs triangles are always holomorphically convex. This is a consequence of the following proposition and the Cartan-Thullen theorem (see \cite[Theorem~7.4.5]{S2005}). 
\begin{proposition} \label{always-DoH}
The $P$-triangle $\triangle^{\!n}_{_P}$ is a domain of holomorphy. 
\end{proposition}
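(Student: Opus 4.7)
The plan is to establish pseudoconvexity of $\triangle^{\!n}_{_P}$ and then invoke the solution of the Levi problem (Oka's theorem), which identifies the pseudoconvex open subsets of $\mathbb C^n$ with the domains of holomorphy. Since pseudoconvexity is a biholomorphic invariant and the map $\varphi$ from \eqref{tilde} is a biholomorphism of $\mathbb C \times \mathbb C^{n-1}_*$ onto itself, it suffices to verify pseudoconvexity of the image $\varphi(\triangle^{\!n}_{_P})$. By \eqref{inter-minus-zero}, this image equals
\[
\Big(\bigcap_{j=1}^n \mathbb B^n_{_{P_j}}\Big) \cap \big(\mathbb C \times \mathbb C^{n-1}_*\big).
\]

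The key observation is that each $\mathbb B^n_{_{P_j}}$ is pseudoconvex. Indeed, since $P_j$ has nonnegative coefficients, its defining function
\[
u_j(z) := P_j(z \diamond \overline z) = \sum_{\alpha} c_{j,\alpha}\,|z^\alpha|^2
\]
is a nonnegative combination of the plurisubharmonic functions $|z^\alpha|^2$, and is therefore continuous and plurisubharmonic on all of $\mathbb C^n$. The sublevel set $\mathbb B^n_{_{P_j}} = \{u_j < 1\}$ of a continuous plurisubharmonic function is pseudoconvex; indeed, a continuous plurisubharmonic exhaustion is furnished by $z \mapsto -\log(1-u_j(z)) + |z|^2$, where the extra $|z|^2$ guarantees properness when $\mathbb B^n_{_{P_j}}$ is unbounded (as was flagged after \eqref{Q-ball-new}).

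The ambient set $\mathbb C \times \mathbb C^{n-1}_*$ is pseudoconvex, being a product of pseudoconvex factors ($\mathbb C$ and $\mathbb C_* = \mathbb C \setminus \{0\}$ are well-known domains of holomorphy). A finite intersection of pseudoconvex open subsets of $\mathbb C^n$ is pseudoconvex on every connected component, and the first part of Proposition~\ref{DoH} already ensures that $\varphi(\triangle^{\!n}_{_P})$ is a domain. Consequently, $\varphi(\triangle^{\!n}_{_P})$, and hence $\triangle^{\!n}_{_P}$, is pseudoconvex, and an appeal to Oka's theorem completes the proof.

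The main obstacle is conceptual rather than computational: one must notice that although the generalized Hartogs triangle is obtained by removing the coordinate hyperplanes $z_j = 0$ ($j=2,\ldots,n$) from an intersection of ``ball-like'' domains, this puncturing does not destroy pseudoconvexity precisely because the ambient punctured product $\mathbb C \times \mathbb C^{n-1}_*$ is itself pseudoconvex. Once this is in place, the plurisubharmonicity of $P_j(z \diamond \overline z)$—a direct consequence of the nonnegativity of the coefficients of $P_j$—makes everything else fall into routine appeals to standard facts from several complex variables.
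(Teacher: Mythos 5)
Your proof is correct, but it follows a genuinely different route from the paper's. The paper stays entirely within the Cartan--Thullen circle of ideas: it first proves that each $Q$-ball $\mathbb B^n_{_Q}$ (for $Q$ with nonnegative coefficients and $Q(0)=0$) is \emph{monomially convex}, hence a domain of holomorphy by the characterization of complete Reinhardt domains of holomorphy; it then intersects these, and handles the puncturing by citing a result to the effect that removing the zero set of a holomorphic function (here $\prod_{j=2}^n z_j$) from a domain of holomorphy yields a domain of holomorphy. You instead work on the pseudoconvex side of the Levi problem: $P_j(z\diamond\overline z)$ is a nonnegative combination of the plurisubharmonic functions $|z^\alpha|^2$, the exhaustion $-\log(1-u_j)+|z|^2$ is plurisubharmonic (since $t\mapsto-\log(1-t)$ is convex increasing) and proper, and the puncturing is absorbed into intersecting with the pseudoconvex set $\mathbb C\times\mathbb C^{n-1}_*$; Oka's theorem then converts pseudoconvexity back into the holomorphic-convexity statement. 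Your treatment of the removed hyperplanes is arguably cleaner --- intersecting with $\mathbb C\times\mathbb C^{n-1}_*$ replaces the paper's appeal to the ``complement of an analytic hypersurface'' result --- and your connectedness appeal to Proposition~\ref{DoH} is the right way to reconcile ``pseudoconvex open set'' with ``domain of holomorphy.'' The trade-off is that you invoke the solution of the Levi problem, a much deeper input than anything the paper uses; the paper's argument is more elementary and self-contained, exploiting the Reinhardt structure directly, but at the cost of the extra lemma on monomial convexity and the citation for removing analytic sets.
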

\begin{proof} Let $Q$ be a polynomial with nonnegative coefficients such that $Q(0)=0.$ We contend that 
\beq \label{Q-ball-DoH} 
\mbox{the $Q$-ball $\mathbb B^n_{_Q}$ is a domain of holomorphy.} 
\eeq
Since $\mathbb B^n_{_Q}$ is a complete Reinhardt domain (see Lemma~\ref{application-CS}), by the virtue of \cite[Theorem 7.4.5]{S2005}, it suffices to check that $\mathbb B^n_{_Q}$ is monomially convex. To see this, suppose that $Q(z)=\sum_{ \alpha \in F}a_\alpha z^\alpha$ for some finite subset $F$ of $\mathbb Z^n_+$ and $a_\alpha \in (0, \infty).$ 
Let $K$ be a compact subset of $\mathbb B^n_{_Q}.$ 
Note that there exists $z_0=(z_{0, 1}, \ldots, z_{0, n}) \in K$ such that 
\beq \label{sup-attained}
\sup_{z \in K}\,|z_j|=|z_{0, j}|, \quad j=1, \ldots, n.
\eeq 
It follows that for any $w \in \hat{K}_{\mathbb C^n},$ 
\beqn
Q(w \diamond \overline{w})  &=& \sum_{ \alpha \in F}a_\alpha |w^\alpha|^2 
 ~\Le ~ \sum_{ \alpha \in F}a_\alpha \sup_{z \in K}|z^\alpha|^2 \\
& \overset{\eqref{sup-attained}}= & \sum_{ \alpha \in F}a_\alpha |z^\alpha_0|^2 
~=~ 
Q(z_0 \diamond \overline{z}_0),
\eeqn
which is less than $1$ (since $z_0 \in K \subseteq  \mathbb B^n_{_Q}$). 
Hence, $\hat{K}_{\mathbb C^n} \subseteq  \mathbb B^n_{_Q}.$ 
This shows that $\hat{K}_{\mathbb B^n_{Q}}=\hat{K}_{\mathbb C^n}.$ Since $\hat{K}_{\mathbb C^n}$ is closed and bounded, $\hat{K}_{\mathbb B^n_{Q}}$ is compact, and hence $\mathbb B^n_{_Q}$  is monomially convex, which completes the verification of \eqref{Q-ball-DoH}. 

We now complete the proof. Since $\varphi$ is an automorphism of $\mathbb C \times \mathbb C^{n-1}_*,$ by \eqref{inter-minus-zero}, it suffices to check that 
\beq \label{inter-set-minus-DOH}
\mbox{$\Big(\bigcap_{j=1}^n \mathbb B^n_{_{P_j}}\Big) \setminus Z(\prod_{j=2}^n z_j)$ is a domain of holomorphy.}
\eeq
Since $P_j$ is a polynomial with nonnegative coefficients and $P_j(0)=0,$ by \eqref{Q-ball-DoH}, $\mathbb B^n_{_{P_j}}$ is a domain of holomorphy for every $j=1, \ldots, n.$  This combined with the fact that the intersection of any two domains of holomorphy is again a domain of holomorphy 
shows that $\bigcap_{j=1}^n \mathbb B^n_{_{P_j}}$ is a domain of holomorphy. Now an application of 
 \cite[Corollary~7.2.4]{S2005} yields \eqref{inter-set-minus-DOH}. 
\end{proof}

Recall from \cite[Definition~1.17.1]{JP2008} that an open set $\Omega \subseteq \mathbb C^n$ is {\it hyperconvex} if there exists
a plurisubharmonic function $u : \Omega \rar \mathbb R$ such that $u < 0$ and $\{z \in \Omega : u(z) < t\}$ is relatively compact in $\Omega$ for every $t <0.$  
It is well-known that a hyperconvex domain is pseudoconvex (see \cites{KR1981, Z2000}). In view of Proposition~\ref{always-DoH} and the fact that a domain is pseudoconvex  if and only if it is a domain of holomorphy (see \cite[Theorem*~1.16.1]{JP2008}), the following is worth noting. 
\begin{corollary} \label{coro-h-convex}
The $P$-triangle $\triangle^{\!n}_{_P}$ is never hyperconvex. 
\end{corollary}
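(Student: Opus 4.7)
The plan is to derive a contradiction by feeding a hypothetical negative plurisubharmonic exhaustion function into a family of analytic discs that pass through the origin, which lies on $\partial \triangle^{\!n}_{_P}.$

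First I would note that $0 \in \partial \triangle^{\!n}_{_P}.$ Since $\triangle^{\!n}_{_P} \subseteq \mathbb C \times \mathbb C^{n-1}_*,$ the origin is not in $\triangle^{\!n}_{_P}.$ On the other hand, picking any $v \in \triangle^{\!n}_{_P}$ and considering the curve $\lambda \mapsto \lambda v$ for $\lambda \to 0$ (see the next step) places $0$ in the closure.

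The key step is the construction of an analytic disc through the origin. Fix $v \in \triangle^{\!n}_{_P}$ and set $\phi(\lambda) = \lambda v$ for $\lambda \in \mathbb D.$ Using \eqref{tilde}, a direct calculation yields
\[
\varphi(\phi(\lambda)) = \Big(\tfrac{v_1}{v_2}, \ldots, \tfrac{v_{n-1}}{v_n},\, \lambda v_n \Big),
\]
which differs from $\varphi(v)$ only in the last entry, and $|\lambda v_n| \le |v_n|.$ Since each $P_j$ has nonnegative coefficients and the other entries of the Hadamard product agree with those of $\varphi(v) \diamond \overline{\varphi(v)},$ monotonicity gives
\[
P_j\bigl(\varphi(\phi(\lambda)) \diamond \overline{\varphi(\phi(\lambda))}\bigr) \,\le\, P_j\bigl(\varphi(v) \diamond \overline{\varphi(v)}\bigr) \,<\, 1, \qquad j=1,\ldots,n.
\]
Together with $\lambda v \in \mathbb C \times \mathbb C^{n-1}_*$ for $\lambda \neq 0,$ this shows $\phi(\lambda) \in \triangle^{\!n}_{_P}$ for every $\lambda \in \mathbb D \setminus \{0\},$ while $\phi(0)=0 \in \partial \triangle^{\!n}_{_P}.$

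To conclude, suppose toward a contradiction that $u : \triangle^{\!n}_{_P} \to \mathbb R$ is a plurisubharmonic function with $u < 0$ whose sublevel sets $\{u < t\}$ are relatively compact in $\triangle^{\!n}_{_P}$ for every $t<0;$ equivalently, $u(z) \to 0$ as $z$ approaches any boundary point of $\triangle^{\!n}_{_P}.$ Then $u \circ \phi$ is subharmonic and bounded above on the punctured disc $\mathbb D \setminus \{0\},$ so by the standard removable-singularity theorem for subharmonic functions it extends to a subharmonic function $\widetilde u$ on $\mathbb D.$ Because $\phi(\lambda) \to 0 \in \partial \triangle^{\!n}_{_P}$ as $\lambda \to 0,$ we have $u(\phi(\lambda)) \to 0,$ whence $\widetilde u(0) = 0.$ But $\widetilde u \le 0$ on $\mathbb D,$ so the maximum principle for subharmonic functions forces $\widetilde u \equiv 0,$ contradicting $u < 0$ on $\triangle^{\!n}_{_P}.$ The main subtlety is the construction of the analytic disc $\phi$ adapted to an arbitrary positive regular $P;$ the extension and maximum-principle steps are then standard.
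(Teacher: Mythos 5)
Your argument is correct, but it proves the corollary by a genuinely different route than the paper. The paper's proof is a one-line appeal to a classification result of Zwonek (\cite[Corollary 2.6.11]{Z2000}) for bounded Reinhardt domains of holomorphy whose closure contains the origin, after invoking Propositions~\ref{DoH} and \ref{always-DoH}. You instead give a self-contained potential-theoretic argument: the computation $\varphi(\lambda v)=(v_1/v_2,\ldots,v_{n-1}/v_n,\lambda v_n)$ together with the nonnegativity of the coefficients of the $P_j$ correctly produces an analytic disc $\phi(\lambda)=\lambda v$ with $\phi(\mathbb D\setminus\{0\})\subseteq\triangle^{\!n}_{_P}$ and $\phi(0)=0\in\partial\triangle^{\!n}_{_P}$, and then the removable-singularity theorem for subharmonic functions bounded above plus the maximum principle rules out a negative plurisubharmonic exhaustion (your reformulation of the sublevel-set condition as $u(z)\to 0$ at the boundary is valid here because $\triangle^{\!n}_{_P}$ is bounded, and only that direction of the equivalence is used). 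What your approach buys is transparency and generality: it isolates the actual geometric obstruction --- a boundary point reached as the center of an analytic disc otherwise lying in the domain, exactly the mechanism that makes the punctured disc non-hyperconvex --- and it applies verbatim to any bounded domain with such a boundary point, with no Reinhardt structure or pseudoconvexity needed. What the paper's citation buys is brevity and a placement of the result within the known dichotomy for Reinhardt domains.
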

\begin{proof} Since $\triangle^{\!n}_{_P}$ is a bounded Reinhardt domain of holomorphy (see Propositions~\ref{DoH} and \ref{always-DoH}) and $\overline{\triangle^{\!n}_{_P}}$ contains the origin, this is immediate from 
\cite[Corollary 2.6.11]{Z2000} (applied to $j = n$). 
\end{proof}



We now present below some examples of $P$-triangles that are central to this article.

\begin{example}  \label{exm-triangle}
Let $n \Ge 2$ be an integer. 
For a real number $a \Ge 0,$ consider the positive regular polynomial $n$-tuple $P_a = (P_{1, a}, \ldots, P_{n, a})$ given by
\beq 
\label{def-P-a}
P_{j, a}(z) = z_j + a \prod_{k=1}^n z_k, \quad j=1, \ldots, n.
\eeq
By Remark~\ref{rmk-d-admissible}, $P_a$ is an $(n-1)$-admissible $n$-tuple, and  
\beqn 
\mbox{$P_a$ is admissible if and only if $a=0.$}
\eeqn 
Note that by \eqref{tilde},
\allowdisplaybreaks
\beqn
P_{j, a}\Big(\varphi(z) \diamond \overline{\varphi(z)}\Big) &=& P_{j, a}\big(|z_1/z_2|^2, \ldots, |z_{n-1}/z_n|^2, |z_n|^2\big) \\
&=& \begin{cases} |z_j/z_{j+1}|^2 + a |z_1|^2 & \mbox{if~} j=1, \ldots, n-1,\\
|z_n|^2 + a|z_1|^2 & \mbox{if~} j=n.
\end{cases}
\eeqn
Thus the $P_a$-triangle, which we denote for simplicity $\triangle^{\!n}_a,$ is given by
\beqn
 \triangle^{\!n}_a =   \Big\{z \in \mathbb C \times  \mathbb C^{n-1}_* : |z_j|^2  < |z_{j+1}|^2(1-a|z_1|^2), ~j=1, \ldots, n-1, &&\\ 
 |z_n|^2 + a|z_1|^2 < 1\Big\}. &&
\eeqn
Let us see $\triangle^{\!n}_{{a}}$ for particular choices of $a$ and $n\!$ (see Figure~\ref{fig})$:$
\begin{enumerate}
\item[$\bullet$] In case of $a=0,$ 
\beqn
\triangle^{\!n}_a =  \{z \in \mathbb C \times  \mathbb C_*^{n-1} : |z_{1}|  < |z_2| < \ldots < |z_n| < 1\}.
\eeqn
\item[$\bullet$] In case of $a=1$ and $n=2,$
\beqn
\triangle^{\!n}_a =  \{z \in \mathbb C \times  \mathbb C_* : |z_{1}|^2  < |z_2|^2(1-|z_1|^2), ~|z_1|^2 + |z_2|^2 < 1\}.
\eeqn
\end{enumerate}
The domain $\triangle^{\!n}_{_0}$ is the $n$-dimensional Hartogs triangle, while  $\triangle^{\!2}_{_0}$ is the Hartogs triangle.
\eof
\end{example}

\begin{figure}[t]
\begin{tikzpicture}[scale=.8, transform shape]
\tikzset{vertex/.style = {shape=circle,draw,minimum size=1em}}
\tikzset{edge/.style = {->,> = latex'}}

 \draw [ultra thick, draw=black, fill=blue, opacity=0.3]
(0,0) -- (0,4) -- (4, 4) -- (0, 0);

\node[] (a) at  (-0.6, 0) {$(0, 0)$};
\node[] (a) at  (0, 0) {$\bullet$};
\node[] (b) at  (-0.6, 4) {$(0, 1)$};
\node[] (b) at  (0, 4) {$\bullet$};
\node[] (c) at  (4.6,4) {$(1, 1)$};
\node[] (c) at  (4, 4) {$\bullet$};
 \draw [ultra thick, draw=black, fill=blue, opacity=0.3]
(8,0) -- (8,4) arc[start angle=90, end angle=45, radius=4] 
(10.6, 2.6) to[out=61,in=0] (8,0);


\node[] (a) at  (7.4, 0) {$(0, 0)$};
\node[] (a) at  (8, 0) {$\bullet$};
\node[] (b) at  (7.4, 4) {$(0, 1)$};
\node[] (b) at  (8, 4) {$\bullet$};
\node[] (c) at  (11.8,2.3) {$(0.618, 0.7862)$};
\node[] (c) at  (10.78, 2.83) {$\bullet$};
\end{tikzpicture}
\caption{Absolute spaces
of $\triangle^{\!2}_{_0}$ and $\triangle^{\!2}_{_1},$ respectively} \label{fig}
\end{figure}

We need a variant of the $n$-dimensional Hartogs triangle in the sequel. 
\begin{example} \label{exam-Delta-P-r}
For the $n$-tuple $\bf r$ of positive real numbers $r_1, \ldots, r_n,$ consider the 
positive regular polynomial $n$-tuple $P_{\bf r}=(P_{{\bf r}, 1}, \ldots, P_{{\bf r}, n})$ given by
\beqn
P_{{\bf r}, j}(z) = \frac{z_j}{r^2_j}, \quad j=1, \ldots, n.
\eeqn
Note that by \eqref{tilde} and \eqref{gen-H-tri},
the $P_{\bf r}$-triangle is given by 
\beqn
\triangle^{\!n}_{_{P_{\bf r}}} &=& \{z \in \mathbb C \times \mathbb C^{n-1}_* : \frac{|z_j|}{|z_{j+1}|} < r_j, ~j=1, \ldots, n-1, ~|z_n| < r_n\} \\ 
&=&
\Big\{ z \in \mathbb C \times \mathbb C^{n-1}_* : |z_1| < r_1|z_2| < \ldots < \Big(\prod_{k=1}^{n-1} r_k\Big) |z_n| < \prod_{k=1}^{n} r_k\Big\}. 
\eeqn
We claim that 
\beq 
\label{image-polydisc}
\mathrm{the ~closure ~of}~ \triangle^{\!n}_{_{P_{\bf r}}} ~\mathrm{is ~equal ~to}~\varphi^{-1}\Big(\overline{\mathbb D}(0, r_1) \times   \cdots \times \overline{\mathbb D}(0, r_n)\Big).
\eeq
To see this, let $A: = \overline{\mathbb D}(0, r_1) \times \overline{\mathbb D}(0, r_2) \backslash \{0\} \times \cdots \times \overline{\mathbb D}(0, r_n)\backslash \{0\}$ and note that  
\beq
\label{A-def}
A ~\mathrm{is ~equal~ to~the~ closure ~of}~\mathbb D(0, r_1) \times {\mathbb D}^{n-1}_*(0, r')~\mathrm{in~}\mathbb C \times \mathbb C^{n-1}_*,
\eeq
where $r'=(r_2, \ldots, r_n).$
It is easy to see that 
\allowdisplaybreaks
\beqn 
& &\varphi^{-1}(\mathbb D(0, r_1) \times {\mathbb D}^{n-1}_*(0, r')) \\ 
&=& \Big\{ (w_1, \ldots, w_n) \in \mathbb C^n : |w_1| < r_1|w_2| < \ldots < \Big(\prod_{k=1}^{n-1} r_k\Big) |w_n| < \prod_{k=1}^{n} r_k\Big\}.
\eeqn
Combining this with \eqref{A-def} and the fact that $\varphi^{-1}$ is an automorphism of $\mathbb C \times \mathbb C^{n-1}_*$
shows that~$\varphi^{-1}(A)$ is equal to  
\begin{equation*}
 \Big\{ (w_1, \ldots, w_n) \in \mathbb C \times \mathbb C^{n-1}_* : |w_1| \Le r_1|w_2| \Le \ldots \Le \Big(\prod_{k=1}^{n-1} r_k\Big) |w_n| \Le \prod_{k=1}^{n} r_k\Big\}.
\end{equation*}
It is now easy to see that \eqref{image-polydisc} holds and the claim stands verified. Finally, note that for the choice $\bf r$ with $r_j=1,$ $j =1, \ldots, n,$ the $P_{\bf r}$-triangle $\triangle^{\!n}_{_{P_{\bf r}}}$ is the $n$-dimensional Hartogs triangle $\triangle^{\!n}_{_0}.$
\eof  
\end{example}


\section{Hardy-Hilbert spaces of generalized Hartogs triangles} \label{S3}

In this section, we associate a positive semi-definite kernel with every generalized Hartogs triangle. To do that, we need a  lemma (the part (i) below has been recorded implicitly in \cite{P1999}). 
\begin{lemma} \label{Spott} 
Let $Q(z) = \displaystyle \sum_{\gamma \in \mathbb Z^n_+} q_{\gamma} z^{\gamma}$ be a nonzero polynomial in the complex variables $z_1, \ldots, z_n,$ where $q_\gamma \in \mathbb C.$ Assume that there exists a neighborhood $\Omega_0$ of $0$ such that $\overline{\Omega}_0 \subseteq \{z \in \mathbb C^n : |Q(z)| < 1\}.$ For every integer $k \in \mathbb Z_+,$ consider the power series expansion$:$ 
\beq \label{exp-1-by-1-Q}
\frac{1}{(1-Q(z))^{k}} = 
\sum_{\alpha \in \mathbb Z^n_+} A_{Q, k}(\alpha) {z}^\alpha, \quad z \in \Omega_0.
\eeq
Set $A_{Q, k}=0$ on $\mathbb Z^n \backslash \mathbb Z^n_+.$ 
Then the following statements hold$:$ 
\begin{enumerate} 
\item[$\mathrm{(i)}$] for any $\alpha \in \mathbb Z^n_+,$ we have 
\beqn A_{Q, k}(\alpha) = A_{Q, k-1}(\alpha) + \displaystyle \sum_{\gamma \in \mathbb Z^n_+}q_{\gamma} A_{Q, k}(\alpha - \gamma), \quad k \in \mathbb N, \eeqn
\item[$\mathrm{(ii)}$] if, for some $j=1, \ldots, n,$ $Q$ is independent of $z_j,$  then 
\beqn A_{Q, k}(\alpha) = 0 ~\mbox{for all~} \alpha \in \mathbb Z^n_+ ~\mbox{such that~} \alpha_j \neq 0,
\eeqn
\item[$\mathrm{(iii)}$] if $Q$ has nonnegative coefficients, then $A_{Q, k}(\alpha) \Ge 0$ for every $\alpha \in \mathbb Z^n.$ If, in addition, $q_{\varepsilon_j} \neq 0$ for some $j=1, \ldots, n,$ then $A_{Q, k}(\alpha_j \varepsilon_j) > 0$ for every $\alpha_j \in \mathbb Z_+.$
\end{enumerate}
\end{lemma}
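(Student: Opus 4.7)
The overall plan is to treat each part by unpacking the explicit form of the series $\frac{1}{(1-Q(z))^k}$: part (i) follows from a simple algebraic identity that allows coefficient-matching, part (ii) is an application of the uniqueness of power series coefficients, and part (iii) is best handled with the generalized binomial series.

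For part (i), I would start from the algebraic identity
\[
\frac{1}{(1-Q(z))^k} \;=\; \frac{1}{(1-Q(z))^{k-1}} \;+\; \frac{Q(z)}{(1-Q(z))^k}, \qquad k \in \mathbb{N},
\]
which holds on $\Omega_0$ because $1 - (1-Q) = Q$. Expanding both sides via \eqref{exp-1-by-1-Q}, using $Q(z)\cdot\sum_\alpha A_{Q,k}(\alpha)z^\alpha = \sum_\alpha\bigl(\sum_\gamma q_\gamma A_{Q,k}(\alpha-\gamma)\bigr)z^\alpha$ (legitimate by the Cauchy product since both factors converge absolutely in a neighborhood of $0$), and comparing the coefficient of $z^\alpha$ gives the recursion in the statement. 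The convention $A_{Q,k}=0$ outside $\mathbb{Z}_+^n$ is exactly what makes the sum over $\gamma\in\mathbb{Z}_+^n$ well-defined.

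For part (ii), if $Q$ does not involve $z_j$, then neither does $\frac{1}{(1-Q(z))^k}$, so by the uniqueness of Taylor coefficients around $0$ the series \eqref{exp-1-by-1-Q} contains only terms with $\alpha_j=0$; all other coefficients are forced to vanish.

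For part (iii), I would avoid the recursion and instead appeal directly to the generalized binomial series
\[
\frac{1}{(1-Q(z))^k} \;=\; \sum_{j=0}^{\infty} \binom{k+j-1}{j}\, Q(z)^j,
\]
valid on $\Omega_0$ since $|Q(z)|<1$ there. Each binomial coefficient is nonnegative, and if $Q$ has nonnegative coefficients then so does every power $Q^j$; hence every coefficient $A_{Q,k}(\alpha)$ of the resulting power series is nonnegative. For the strict positivity claim, I would restrict attention to the slice $z = t\varepsilon_j$ and write $Q(t\varepsilon_j) = q_0 + t\,R(t)$ with $R(0) = q_{\varepsilon_j} > 0$ and $R$ having nonnegative coefficients. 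Then
\[
\frac{1}{(1-Q(t\varepsilon_j))^k} \;=\; \frac{1}{(1-q_0)^k}\sum_{m=0}^\infty \binom{k+m-1}{m}\frac{t^m R(t)^m}{(1-q_0)^m},
\]
and the coefficient of $t^{\alpha_j}$ picks up, from the term $m=\alpha_j$, the strictly positive contribution $\binom{k+\alpha_j-1}{\alpha_j}\,q_{\varepsilon_j}^{\alpha_j}/(1-q_0)^{k+\alpha_j}$. The remaining terms are nonnegative by the same argument, so $A_{Q,k}(\alpha_j\varepsilon_j)>0$ (for $k\Ge 1$; the case $k=0$ is trivial since $\alpha_j=0$ then). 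The only mildly delicate point, and the one I would be most careful about, is to justify that $0 \Le q_0 < 1$ (so that the denominators above are positive), which follows from $q_0 = Q(0)$, the evaluation $|Q(0)|<1$ guaranteed by $0\in\overline{\Omega_0}\subseteq\{|Q|<1\}$, and the nonnegativity of $q_0$.
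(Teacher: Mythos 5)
Your proof is correct and follows essentially the same route as the paper: part (i) via the identity $\tfrac{1}{(1-Q)^{k-1}}=\tfrac{1}{(1-Q)^{k}}-\tfrac{Q}{(1-Q)^{k}}$ and coefficient comparison, and part (ii) via uniqueness of Taylor coefficients. For part (iii) the paper merely says it "may be deduced from the expansion," and your binomial-series argument (including the slice $z=t\varepsilon_j$ and the check that $0\le q_0<1$) is a careful and valid way to supply the missing details; your remark that the strict positivity claim requires $k\ge 1$ is a legitimate observation about an edge case the paper glosses over.
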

\begin{proof} Clearly, the series in \eqref{exp-1-by-1-Q} converges absolutely on $\Omega_0.$ Let $k \in \mathbb N$ and $z \in \Omega_0.$ Note that 
\beqn
\frac{Q(z)}{(1-Q(z))^{k}} &\overset{\eqref{exp-1-by-1-Q}}=& \Big(\sum_{\alpha \in \mathbb Z_+^n} A_{Q, k}(\alpha) z^\alpha \Big)\Big(\sum_{\gamma \in \mathbb Z_+^n} q_\gamma z^\gamma \Big) \\ &=& \sum_{\alpha \in \mathbb Z_+^n} \Big(\sum_{\gamma \in \mathbb Z_+^n} q_\gamma A_{Q, k}(\alpha - \gamma)\Big) z^\alpha.
\eeqn
This combined with \eqref{exp-1-by-1-Q} yields 
\allowdisplaybreaks
\beqn
\sum_{\alpha \in \mathbb Z_+^n} A_{Q, k-1}(\alpha) z^\alpha  &=& \frac{1}{(1-Q(z))^{k-1}}   \\
&=&  \frac{1}{(1-Q(z))^{k}} - \frac{Q(z)}{(1-Q(z))^{k}}  \\ &=&  \sum_{\alpha \in \mathbb Z_+^n} \Big(A_{Q, k}(\alpha)-\sum_{\gamma \in \mathbb Z_+^n} q_\gamma A_{Q, k}(\alpha - \gamma) \Big)  z^\alpha
\eeqn
(all the series above are absolutely convergent in a neighborhood of the origin). 
Comparing the coefficients on both sides, we obtain the desired identity.
To see (ii), note that if $Q$ is independent of $z_j,$ then the series on the right hand side of \eqref{exp-1-by-1-Q} does not contain any term involving $z_j.$ 
Similarly, (iii) may be deduced from \eqref{exp-1-by-1-Q}.
\end{proof}

Let $m=(m_1, \ldots, m_n) \in \mathbb N^n$ and let $P$ be a positive regular 
polynomial $n$-tuple. Fix $j=1, \ldots, n.$ Since $P_j(0)=0,$ 
$\Omega_j:=\{z \in \mathbb C^n : |P_j(z)|<1\}$ is a nonempty open set containing the origin. 
Hence, by Lemma~\ref{Spott}, there exists a function $A_{P_j, m_j} : \mathbb Z^n \rar [0, \infty)$ such that
\beq 
\label{coeff-Pj-mj}
\frac{1}{(1-P_j(z))^{m_j}} = 
\sum_{\alpha \in \mathbb Z^n_+} A_{P_j, m_j}(\alpha) {z}^\alpha,
\eeq
which converges compactly on $\Omega_j.$
An application of the Cauchy-Schwarz inequality shows that 
 \beqn 
|P_j(z \diamond w)| \Le \sqrt{P_j(z \diamond \overline{z})}\sqrt{P_j(w \diamond \overline{w})}, \quad z, w \in \mathbb C^n.
\eeqn
It follows that 
\beqn
\varphi(z) \diamond \overline{\varphi(w)} \in \Omega_j, \quad z, w \in \triangle^{\!n}_{_P}.
\eeqn
This combined with \eqref{coeff-Pj-mj} yields a function $A_{P, m} : \mathbb Z^n_+ \rar [0, \infty)$ such that  
\beq \label{exp} 
\prod_{j=1}^n \frac{1}{\Big(1-P_j(\varphi(z) \diamond \overline{\varphi(w)})\Big)^{m_j}} 
= \sum_{\alpha \in \mathbb Z^n_+} A_{P, m}(\alpha) \Big(\varphi(z) \diamond \overline{\varphi(w)}\Big)^{\alpha},
\eeq
which converges compactly  on  $\triangle^{\!n}_{_P} \times \triangle^{\!n}_{_P}.$
We refer to $A_{P, m}$ as the {\it coefficient-function associated with the pair $(P, m).$}
We extend $A_{P, m}$ to $\mathbb Z^n$ by setting $A_{P, m}=0$ on $\mathbb Z^n \backslash \mathbb Z^n_+.$ 
\begin{remark}  
 If we take $\varphi : \mathbb C \rar \mathbb C$ as the identity map and $P$ as any polynomial in a complex variable such that 
$P$ has nonnegative coefficients, $P(0)=0$ and $P'(0) > 0,$ then the definition of $P$-triangle makes sense also in the case of $n=1.$
In this case, the $P$-triangle is nothing but the $P$-ball $\mathbb D_{_P}$ (see \eqref{Q-ball-new}). Moreover,
the identity \eqref{exp} above takes the following form$:$
\beq \label{exp-exp} 
\frac{1}{\Big(1-P(z \overline{w})\Big)^{m}} 
= \sum_{k \in \mathbb Z_+} A_{P, m}(k) z^k \overline{w}^{k},
\eeq
which converges compactly  on  $\mathbb D_{_P} \times \mathbb D_{_P}.$
\hfill $\diamondsuit$
\end{remark}

We collect below some properties of the coefficient-function. 
\begin{lemma} \label{APM-positive}
If $A_{P, m}$ is the coefficient-function associated with the pair $(P, m),$ then  $A_{P, m}$ maps  $\mathbb Z^n_+$ into $(0, \infty)$ such that  
\beq \label{rho-alpha-f-new}
&& A_{P, m}(0)=1, ~ A_{P_j, m_j}(0)=1, ~j=1, \ldots, n, \\ \label{rho-alpha-f}
&& A_{P, m}(\alpha) = \sum_{{\substack{\gamma^{(1)}, \ldots, \gamma^{(n)} \in \mathbb Z^n_+ \\ \sum_{k=1}^n \gamma^{(k)} = \alpha}}} \prod_{j=1}^n A_{P_j, m_j}(\gamma^{(j)}).
\eeq
If, in addition, $P$ is admissible, then 
\beq \label{A-tilde-A-new}
A_{P, m}(\alpha) = \prod_{j=1}^nA_{{P}_j, m_j}(\alpha_j \varepsilon_j), \quad \alpha \in \mathbb Z^n_+. 
\eeq
\end{lemma}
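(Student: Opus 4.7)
The plan is to proceed directly from the power series definition \eqref{exp}, reading off each claim by expanding the product of the $n$ single-factor series and appealing to Lemma~\ref{Spott}.

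First I would verify the normalization \eqref{rho-alpha-f-new}. Setting $z=0$ in the one-variable expansion \eqref{coeff-Pj-mj} gives $1 = 1/(1-P_j(0))^{m_j} = A_{P_j,m_j}(0)$, since $P_j(0) = 0$ by positive regularity; an identical substitution in \eqref{exp} (noting $\varphi(0)$ is not meaningful, so instead I would take the series identity at a point where $\varphi(z) = 0 = \varphi(w)$ in the coordinates, equivalently read the constant term of the left-hand side written in terms of $\varphi(z)\diamond\overline{\varphi(w)}$) gives $A_{P,m}(0) = 1$.

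Next, for \eqref{rho-alpha-f}, I would write the left-hand side of \eqref{exp} as the product $\prod_{j=1}^n \sum_{\gamma^{(j)} \in \mathbb Z^n_+} A_{P_j, m_j}(\gamma^{(j)})\bigl(\varphi(z) \diamond \overline{\varphi(w)}\bigr)^{\gamma^{(j)}}$, using \eqref{coeff-Pj-mj} with $z$ replaced by $\varphi(z)\diamond\overline{\varphi(w)}$. Since the series all converge absolutely (each $P_j(\varphi(z)\diamond\overline{\varphi(w)})$ has modulus strictly less than one on $\triangle^{\!n}_{_P} \times \triangle^{\!n}_{_P}$), the multivariable Cauchy product is justified, and collecting the coefficient of $\bigl(\varphi(z) \diamond \overline{\varphi(w)}\bigr)^{\alpha}$ yields exactly the convolution sum in \eqref{rho-alpha-f}. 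Comparing with \eqref{exp} and using uniqueness of multivariate power-series coefficients produces the identity.

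For strict positivity of $A_{P,m}(\alpha)$ on $\mathbb Z^n_+$, I would single out the ``diagonal'' term $\gamma^{(j)} = \alpha_j \varepsilon_j$ in \eqref{rho-alpha-f}; this indeed satisfies $\sum_{j=1}^n \gamma^{(j)} = \alpha$. Since $P_j$ is positive regular, its coefficient of $z_j$ equals $a_j > 0$, so Lemma~\ref{Spott}(iii) yields $A_{P_j,m_j}(\alpha_j \varepsilon_j) > 0$ for every $\alpha_j \in \mathbb Z_+$. All remaining summands in \eqref{rho-alpha-f} are nonnegative, again by Lemma~\ref{Spott}(iii), so $A_{P,m}(\alpha) > 0$.

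Finally, for \eqref{A-tilde-A-new}, I would invoke the admissibility characterization \eqref{P-admissible-rmk}: each $P_j$ depends only on $z_j$. By Lemma~\ref{Spott}(ii), $A_{P_j,m_j}(\gamma^{(j)}) = 0$ whenever $\gamma^{(j)}$ has a nonzero entry in a slot other than $j$, i.e.\ unless $\gamma^{(j)} = c_j \varepsilon_j$ for some $c_j \in \mathbb Z_+$. The constraint $\sum_{k=1}^n \gamma^{(k)} = \alpha$ then forces $c_j = \alpha_j$ for each $j$, collapsing the sum in \eqref{rho-alpha-f} to a single product and giving the desired factorization. I don't expect any real obstacle here; the only mild subtlety is the careful use of absolute convergence to justify the Cauchy product on $\triangle^{\!n}_{_P} \times \triangle^{\!n}_{_P}$, which is already embedded in the compact convergence noted after \eqref{exp}.
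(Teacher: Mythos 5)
Your proposal is correct and follows essentially the same route as the paper: comparing coefficients in the Cauchy product of the series \eqref{coeff-Pj-mj} to get \eqref{rho-alpha-f}, deducing the normalizations from the constant terms, obtaining positivity from Lemma~\ref{Spott}(iii) via the diagonal term $\gamma^{(j)}=\alpha_j\varepsilon_j$, and collapsing the convolution sum via Lemma~\ref{Spott}(ii) in the admissible case. Your treatment of the positivity step is in fact slightly more explicit than the paper's, which leaves the identification of the strictly positive summand implicit.
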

\begin{proof} 
Note that for $z, w \in \triangle^{\!n}_{_P},$
\allowdisplaybreaks
\beqn
&& \sum_{\alpha \in \mathbb Z^n_+} A_{P, m}(\alpha) \Big(\varphi(z) \diamond \overline{\varphi(w)}\Big)^{\alpha} \\ &\overset{\eqref{exp}} = &\prod_{j=1}^n \frac{1}{\Big(1-P_j\Big(\varphi(z) \diamond \overline{\varphi(w)}\Big)\Big)^{m_j}} \\
 &\overset{\eqref{coeff-Pj-mj}}=& \prod_{j=1}^n \sum_{\gamma^{(j)} \in \mathbb Z^n_+} A_{P_j, m_j}(\gamma^{(j)}) \Big(\varphi(z) \diamond \overline{\varphi(w)}\Big)^{\gamma^{(j)}},
\eeqn
where all the series above converge compactly on $\triangle^{\!n}_{_P} \times \triangle^{\!n}_{_P}.$
Comparing the coefficients of $\Big(\varphi(z) \diamond \overline{\varphi(w)}\Big)^{\alpha}$ on both sides, we get \eqref{rho-alpha-f}. To see \eqref{rho-alpha-f-new}, note that by comparing the constant term in \eqref{coeff-Pj-mj}, we obtain $A_{P_j, m_j}(0)=1,$ $j=1, \ldots, n,$ To complete the verification of \eqref{rho-alpha-f-new}, let $\alpha =0$ in \eqref{rho-alpha-f}. Next, note that
since $P$ is a positive regular polynomial $n$-tuple, Lemma~\ref{Spott}(iii) together with \eqref{rho-alpha-f} shows that $A_{P, m}$ maps  $\mathbb Z^n_+$ into $(0, \infty).$ 
Finally, if $P$ is admissible, then  
by Lemma~\ref{Spott}(ii),
\beqn
A_{P_j, m_j}(\alpha)=0~\mbox{if~}\alpha \in \mathbb Z^n_+, ~\alpha_i \neq 0, ~1 \Le i \neq j \Le n.
\eeqn
Hence, \eqref{A-tilde-A-new} follows from \eqref{rho-alpha-f}.
\end{proof}

We show below that every pair $(P, m)$ yields a positive semi-definite kernel. 
\begin{lemma} \label{lem-Moore}
Let $P$ be a positive regular polynomial $n$-tuple and let $m \in \mathbb N^n.$ The kernel function 
$\mathscr K_{_{P, m}} : \triangle^{\!n}_{_P} \times \triangle^{\!n}_{_P} \rightarrow \mathbb C$ given by
\beq \label{exp-2}
 \quad \mathscr K_{_{P, m}}(z,w) = \prod_{j=2}^n\frac{1}{z_j \overline{w}_j} \prod_{j=1}^n \frac{1}{\Big(1-P_j(\varphi(z) \diamond \overline{\varphi(w)})\Big)^{m_j}}, 
~ z, w \in \triangle^{\!n}_{_P}
\eeq
is positive semi-definite, where $\varphi$ is given by \eqref{tilde}. 
\end{lemma}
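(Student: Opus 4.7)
The plan is to factor $\mathscr K_{_{P,m}}(z,w)$ as a product of two kernels on $\triangle^{\!n}_{_P}\times\triangle^{\!n}_{_P}$, show each is positive semi-definite, and then invoke the Schur product theorem to conclude.

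First, I would observe that on $\triangle^{\!n}_{_P}\subseteq\mathbb C\times\mathbb C^{n-1}_*$ the function $f(z)=\prod_{j=2}^n z_j^{-1}$ is well defined and holomorphic. Thus the first factor
\[
L(z,w):=\prod_{j=2}^n\frac{1}{z_j\overline{w}_j}=f(z)\,\overline{f(w)}
\]
is a rank-one positive semi-definite kernel on $\triangle^{\!n}_{_P}\times\triangle^{\!n}_{_P}$.

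Next, I would handle the second factor
\[
M(z,w):=\prod_{j=1}^n\frac{1}{\bigl(1-P_j(\varphi(z)\diamond\overline{\varphi(w)})\bigr)^{m_j}}
\]
using the series expansion \eqref{exp}, which gives
\[
M(z,w)=\sum_{\alpha\in\mathbb Z^n_+}A_{P,m}(\alpha)\,\varphi(z)^{\alpha}\,\overline{\varphi(w)^{\alpha}},
\]
with compact convergence on $\triangle^{\!n}_{_P}\times\triangle^{\!n}_{_P}$. Since $A_{P,m}(\alpha)\Ge 0$ by Lemma~\ref{APM-positive} (indeed, strictly positive on $\mathbb Z^n_+$), each summand $\alpha\mapsto A_{P,m}(\alpha)\varphi(z)^\alpha\overline{\varphi(w)^\alpha}$ is rank-one psd, and a compactly (hence pointwise) convergent sum of psd kernels is psd. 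Therefore $M$ is positive semi-definite.

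Finally, since $\mathscr K_{_{P,m}}(z,w)=L(z,w)\,M(z,w)$ is the pointwise (Schur/Hadamard) product of two psd kernels on $\triangle^{\!n}_{_P}\times\triangle^{\!n}_{_P}$, the Schur product theorem immediately yields positive semi-definiteness of $\mathscr K_{_{P,m}}$. No real obstacle is expected; the only points worth stating carefully are the well-definedness of $f$ on $\triangle^{\!n}_{_P}$ (guaranteed by the inclusion $\triangle^{\!n}_{_P}\subseteq\mathbb C\times\mathbb C^{n-1}_*$) and the non-negativity of the coefficients $A_{P,m}(\alpha)$, both of which have already been established above.
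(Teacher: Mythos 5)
Your proposal is correct and is essentially the paper's argument: the paper likewise uses the expansion \eqref{exp} together with the nonnegativity of $A_{P,m}$ (and the identity $(z\diamond w)^\alpha=z^\alpha w^\alpha$) to exhibit $\mathscr K_{_{P,m}}$ as a pointwise-convergent sum of rank-one positive semi-definite kernels. The only cosmetic difference is that you split off the rank-one factor $\prod_{j=2}^n(z_j\overline{w}_j)^{-1}$ and invoke the Schur product theorem, whereas the paper absorbs that factor directly into the rank-one summands $e_\alpha(z)\overline{e_\alpha(w)}$.
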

\begin{proof} Note that $(z \diamond w)^{\alpha} = z^\alpha w^{\alpha}$ for any $z, w \in \mathbb C^n$ and $\alpha \in \mathbb Z^n_+.$ Combining this identity (by letting $z =\varphi(z)$ and $w = \overline{\varphi(w)}$) together with \eqref{exp} and the fact that the coefficient-function $A_{P, m}$ takes nonnegative values yields 
the desired conclusion. 
\end{proof}

Let $P$ be a positive regular polynomial $n$-tuple and let $m \in \mathbb N^n.$
By Lemma~\ref{lem-Moore} and Moore's theorem (see \cite[Theorem~2.14]{PR2016}), there exists a unique reproducing kernel Hilbert space $\mathscr H^2_m(\triangle^{\!n}_{_P})$ associated with the kernel $\mathscr K_{_{P, m}}.$ 
In particular, $\mathscr K_{_{P, m}}$ has the reproducing property$:$
\beq \label{rp}
\inp{f}{\mathscr K_{_{P, m}}(\cdot, w)} = f(w), \quad f \in \mathscr H^2_m(\triangle^{\!n}_{_P}), ~w \in \triangle^{\!n}_{_P}. 
\eeq
We refer to $\mathscr H^2_m(\triangle^{\!n}_{_P})$ as the {\it Hardy-Hilbert space of the generalized Hartogs triangle $\triangle^{\!n}_{_P}.$} 
Since $\mathscr K_{_{P, m}}$ is holomorphic in $z$ and $\overline{w},$ the elements of $\mathscr H^2_m(\triangle^{\!n}_{_P})$ are functions holomorphic on $\triangle^{\!n}_{_P}.$

Unless stated otherwise, $n$ is a positive integer bigger than or equal to $2.$ 
In what follows, we always assume that
\begin{itemize}
\vskip0.2cm
\item[\tiny $\bullet$] $P$ is a positive regular polynomial $n$-tuple and  $m \in \mathbb N^n,$
\item[\tiny $\bullet$] $\triangle^{\!n}_{_P}$ is the $P$-triangle given by \eqref{gen-H-tri}, 
\item[\tiny $\bullet$] $A_{P, m} : \mathbb Z^n_+ \rar (0, \infty)$ is the coefficient-function associated with $(P, m)$ (see Lemma~\ref{APM-positive} and \eqref{exp}),
 \item[\tiny $\bullet$]  $\mathscr K_{_{P, m}}$ is the positive semi-definite kernel given by 
\eqref{exp-2}, 
\item[\tiny $\bullet$] $\mathscr H^2_m(\triangle^{\!n}_{_P})$ is the reproducing kernel Hilbert space associated with the kernel $\mathscr K_{_{P, m}}.$ 
\vskip0.2cm
\end{itemize}

We collect some basic properties of the Hardy-Hilbert space $\mathscr H^2_m(\triangle^{\!n}_{_P}).$
\begin{proposition} \label{on-basis}
For $\alpha \in \mathbb Z^n_+,$ define 
$e_\alpha : \triangle^{\!n}_{_P} \rar \mathbb C$ by
\beq \label{e-alpha}
e_\alpha(z) = \frac{\sqrt{A_{P, m}(\alpha)} \, \varphi(z)^{\alpha}}{\prod_{j=2}^nz_j}, \quad z \in \triangle^{\!n}_{_P},
\eeq
where $\varphi$ is as given in \eqref{tilde}. Then $\{e_\alpha\}_{\alpha \in \mathbb Z^n_+}$ forms an orthonormal basis for $\mathscr H^2_m(\triangle^{\!n}_{_P}).$ Moreover, the following statements are valid$:$
\begin{itemize}
\item[$\mathrm{(i)}$] the linear span of 
\beqn
\prod_{j=1}^{n-1}\Big(\frac{z_j}{z_{j+1}}\Big)^{\alpha_j} \frac{z^{\alpha_n}_n}{\prod_{j=2}^n z_j}, \quad \alpha_1, \ldots, \alpha_n \in \mathbb Z_+,
\eeqn
is dense in $\mathscr H^2_m(\triangle^{\!n}_{_P}),$
\item[$\mathrm{(ii)}$] for any $f \in \mathscr H^2_m(\triangle^{\!n}_{_P}),$ the series $\sum_{\alpha \in \mathbb Z^n_+}\inp{f}{e_\alpha}e_\alpha$ converges compactly on $\triangle^{\!n}_{_P}$ to $f.$ 
\end{itemize}
\end{proposition}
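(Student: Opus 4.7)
The plan is to construct an abstract Hilbert space carrying $\{e_\alpha\}_{\alpha \in \mathbb Z^n_+}$ as a designated orthonormal basis, verify that it admits $\mathscr K_{_{P,m}}$ as reproducing kernel, and then invoke Moore's theorem to identify it with $\mathscr H^2_m(\triangle^{\!n}_{_P}).$ Combining \eqref{exp-2} with the expansion \eqref{exp} gives the pointwise identity
$$\mathscr K_{_{P,m}}(z,w) = \sum_{\alpha \in \mathbb Z^n_+} e_\alpha(z) \overline{e_\alpha(w)}, \quad z, w \in \triangle^{\!n}_{_P},$$
with compact convergence. Specializing to $z=w,$ $\sum_\alpha |e_\alpha(z)|^2 = \mathscr K_{_{P,m}}(z,z) < \infty,$ so by Cauchy--Schwarz, for each $c = (c_\alpha) \in \ell^2(\mathbb Z^n_+)$ the series $(Tc)(z) := \sum_\alpha c_\alpha e_\alpha(z)$ converges absolutely with $|(Tc)(z)| \Le \|c\|_{\ell^2} \sqrt{\mathscr K_{_{P,m}}(z,z)},$ and this defines a linear map $T$ from $\ell^2(\mathbb Z^n_+)$ into the space of complex-valued functions on $\triangle^{\!n}_{_P}.$

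The crux will be injectivity of $T.$ Suppose $Tc \equiv 0$ on $\triangle^{\!n}_{_P}.$ Multiplying by $\prod_{j=2}^n z_j$ and writing $w = \varphi(z)$ reduces the hypothesis to
$$\sum_{\alpha} c_\alpha \sqrt{A_{_{P,m}}(\alpha)}\, w^\alpha = 0, \quad w \in \varphi(\triangle^{\!n}_{_P}),$$
where $\varphi(\triangle^{\!n}_{_P})$ is the open set described in \eqref{inter-minus-zero}. This set is $\mathbb T^n$-invariant, since each $\mathbb B^n_{_{P_j}}$ is Reinhardt by Lemma~\ref{application-CS} and $Z(\prod_{j=2}^n z_j)$ clearly is. Replacing $w$ by $\zeta \diamond w$ for $\zeta \in \mathbb T^n$ produces, for each fixed $w,$ an absolutely convergent Fourier series in $\zeta$ that vanishes identically; absolute convergence follows from Cauchy--Schwarz against $\sum_\alpha A_{_{P,m}}(\alpha) |w|^{2\alpha} < \infty.$ Its Fourier coefficients $c_\alpha \sqrt{A_{_{P,m}}(\alpha)}\, w^\alpha$ must therefore all vanish, and choosing any $w$ with $w_1 \neq 0$ together with $A_{_{P,m}}(\alpha) > 0$ from Lemma~\ref{APM-positive} forces $c_\alpha = 0$ for every $\alpha.$

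With injectivity in hand, I will endow $\mathscr G := T(\ell^2(\mathbb Z^n_+))$ with the unique inner product making $T$ a unitary, so that $\{e_\alpha\}$ is tautologically an orthonormal basis of $\mathscr G.$ For every $w \in \triangle^{\!n}_{_P}$ the sequence $(\overline{e_\alpha(w)})$ is in $\ell^2$ and is mapped by $T$ to $\mathscr K_{_{P,m}}(\cdot, w);$ the reproducing property on $\mathscr G$ is then a one-line inner-product computation, so $\mathscr G$ is a reproducing kernel Hilbert space with kernel $\mathscr K_{_{P,m}}.$ Moore's theorem then forces $\mathscr G = \mathscr H^2_m(\triangle^{\!n}_{_P}),$ and $\{e_\alpha\}$ is an orthonormal basis of the latter. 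Statement (i) is immediate from the factorization $\varphi(z)^\alpha = \prod_{j=1}^{n-1}(z_j/z_{j+1})^{\alpha_j}\, z_n^{\alpha_n},$ which shows the displayed functions are positive scalar multiples of $e_\alpha.$ Statement (ii) follows from the standard reproducing kernel inequality $|f(z) - S_N(z)| \Le \|f - S_N\| \sqrt{\mathscr K_{_{P,m}}(z,z)},$ once one notes that $\mathscr K_{_{P,m}}(z,z)$ is continuous and hence locally bounded on $\triangle^{\!n}_{_P}.$

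I expect the injectivity of $T$ to be the principal obstacle: the $e_\alpha$ are rational rather than polynomial and $\triangle^{\!n}_{_P}$ fails to be Reinhardt at the origin, so no direct Taylor-uniqueness argument is available. The proof circumvents this by exploiting the $\mathbb T^n$-invariance of $\varphi(\triangle^{\!n}_{_P})$ together with the absolute convergence furnished by the kernel itself via Cauchy--Schwarz.
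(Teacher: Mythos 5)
Your proof is correct, and it takes a genuinely different route from the paper's. The paper also starts from the factorization $\mathscr K_{_{P, m}}(z, w) = \sum_{\alpha} e_\alpha(z)\overline{e_\alpha(w)}$, but then invokes the Papadakis theorem (\cite[Theorem 2.10 and Exercise~3.7]{PR2016}) to conclude immediately that each $e_\alpha$ lies in $\mathscr H^2_m(\triangle^{\!n}_{_P})$ and that $\{e_\alpha\}$ is a Parseval frame; it then upgrades the frame to an orthonormal basis by expanding $e_\alpha = \sum_\beta \inp{e_\alpha}{e_\beta}e_\beta$, substituting $w = \varphi(z)$, and appealing to the vanishing of all coefficients of a power series that is identically zero on its domain of convergence. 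You instead build the space from scratch: the feature map $T : \ell^2(\mathbb Z^n_+) \to \mathbb C^{\triangle^{\!n}_{_P}}$, its injectivity, the transported inner product, and the uniqueness clause of Moore's theorem. The two arguments hinge on the same delicate point — uniqueness of the coefficients of a series $\sum_\alpha c_\alpha w^\alpha$ converging on $\varphi(\triangle^{\!n}_{_P})$, an open set that does \emph{not} contain the origin — and your treatment of it is the more explicit of the two: the paper's phrase ``the power series above is identically zero on its domain of convergence'' glosses over exactly the issue you isolate, whereas your Fourier-coefficient argument on the $\mathbb T^n$-invariant set $\varphi(\triangle^{\!n}_{_P})$, with absolute convergence supplied by Cauchy--Schwarz against $\mathscr K_{_{P,m}}(z,z) < \infty$, settles it cleanly. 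What the paper's route buys is brevity (the frame machinery of \cite{PR2016} does the construction for you); what yours buys is self-containedness and a fully justified injectivity step. Your handling of (i) and (ii) matches the paper's.
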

\begin{proof}
By \eqref{exp} and \eqref{exp-2}, 
\beq \label{kernel-basis-exp}
\mathscr K_{_{P, m}}(z, w) = \sum_{\alpha \in \mathbb Z^n_+} e_\alpha(z)\overline{e_\alpha(w)}, 
\eeq
where the series on the right hand converges compactly on $\triangle^{\!n}_{_P} \times \triangle^{\!n}_{_P}.$ 
By a theorem of Papadakis (see \cite[Theorem 2.10 and Exercise~3.7]{PR2016}), for every $\alpha \in \mathbb Z^n_+,$ $e_\alpha \in \mathscr H^2_m(\triangle^{\!n}_{_P})$ 
and $\{e_\alpha\}_{\alpha \in \mathbb Z^n_+}$ forms a Parseval's frame\footnote{A set of vectors $\{ f_s : s \in S\} \subseteq H$ is called a {\it Parseval frame} for the Hilbert space $H$ if 
for every $h \in H,$
$\|h\|^2 = \sum_{s \in S}|\inp{h}{f_s}|^2.$} for $\mathscr H^2_m(\triangle^{\!n}_{_P}).$ Hence, by 
\cite[Proposition~2.8]{PR2016}, 
\beqn
e_\alpha = \sum_{\beta \in \mathbb Z^n_+} \inp{e_\alpha}{e_\beta}e_\beta, \quad \alpha \in \mathbb Z^n_+.
\eeqn
In view of \eqref{e-alpha}, 
this is equivalent to
\begin{equation*}
\sqrt{A_{P, m}(\alpha)} \, \varphi(z)^{\alpha}(\|e_\alpha\|^2-1) + \sum_{\substack{\beta \in \mathbb Z^n_+ \\ \beta \neq \alpha}} \inp{e_\alpha}{e_\beta}\sqrt{A_{P, m}(\beta)} \, \varphi(z)^{\beta} =0, \, z \in \triangle^{\!n}_{_P}.
\end{equation*}
Letting $w=\varphi(z),$ we get 
\beqn
\sqrt{A_{P, m}(\alpha)} \, w^{\alpha} (\|e_\alpha\|^2-1) + \sum_{\substack{\beta \in \mathbb Z^n_+ \\ \beta \neq \alpha}} \inp{e_\alpha}{e_\beta} \sqrt{A_{P, m}(\beta)} \, w^{\beta} =0, \, w \in \varphi(\triangle^{\!n}_{_P}). 
\eeqn
Hence, the power series above is identically zero on its domain of convergence. Consequently, $\|e_\alpha\|^2=1$ and $\inp{e_\alpha}{e_\beta} =0$ for every $\beta \in \mathbb Z^n_+$ such that $\beta \neq \alpha.$ Thus $\{e_\alpha\}_{\alpha \in \mathbb Z^n_+}$ forms an orthonormal basis for $\mathscr H^2_m(\triangle^{\!n}_{_P}).$
This also yields (i). 
To see (ii),  observe that for any compact subset $K$ of $\triangle^{\!n}_{_P},$ $ \sup_{w \in K} \mathscr K_{_{P, m}}(w, w) < \infty,$ and use \eqref{rp}. 
\end{proof}

We now present some examples of Hardy-Hilbert spaces of the generalized Hartogs triangles that are central to the present investigations. 
\begin{example}[Example~\ref{exm-triangle} continued $\cdots$] \label{exm-triangle-2}
It is easy to see from Lemma~\ref{lem-Moore} that the reproducing kernel $\mathscr K_{P_a, m}$ (for short, $ \mathscr K_{a, m}$) of the Hilbert space $\mathscr H^2_m(\triangle^{\!n}_{a})$ is given by 
\beq \label{exp-hered}
&&\mathscr K_{a, m}(z, w) \\ &=& \prod_{j=2}^n\frac{1}{z_j \overline{w}_j} \Big( \prod_{j=1}^{n-1} \frac{1}{\Big(1-\frac{z_j \overline{w}_j}{z_{j+1} \overline{w}_{j+1}}-a z_1 \overline{w}_1\Big)^{m_j}}\Big)\frac{1}{(1-z_n \overline{w}_n-az_1 \overline{w}_1)^{m_n}}, \notag \\ && \quad \quad  \quad \quad\quad \quad \quad \quad\quad \quad \quad \quad\quad \quad\quad \quad\quad \quad \quad \quad\quad \quad\quad \quad z, w \in \triangle^{\!n}_{a}. \notag 
\eeq
Here are some important cases$:$
\begin{itemize}
\item[$\mathrm{(i)}$] In case of  $a=0,$  $\mathscr K_{a, m}$ takes the form
\begin{equation*}
 \mathscr K_{a, m}(z,w)  = \prod_{j=2}^n\frac{1}{z_j \overline{w}_j} \Big( \prod_{j=1}^{n-1} \frac{1}{\Big(1-\frac{z_j \overline{w}_j}{z_{j+1} \overline{w}_{j+1}}\Big)^{m_j}}\Big)\frac{1}{(1-z_n \overline{w}_n)^{m_n}}, \, z, w \in \triangle^{\!n}_{a}. 
\end{equation*}
\item[$\mathrm{(ii)}$] In case of $a=1$ and $n=2,$ $\mathscr K_{a, m}$ takes the form
\begin{equation*}
\mathscr K_{a, m}(z,w)  = \frac{1}{z_2 \overline{w}_2} \frac{1}{\Big(1-\frac{z_1 \overline{w}_1}{z_{2} \overline{w}_{2}}-  z_1 \overline{w}_1\Big)^{m_1}(1-z_1 \overline{w}_1- z_2 \overline{w}_2)^{m_2}} , \, z, w \in \triangle^{\!n}_{a}. 
\end{equation*}
\end{itemize}
If $m_j=1,$ $j=1, \ldots, n,$ then we denote the space $\mathscr H^2_m(\triangle^{\!n}_{_a})$ by $\mathscr H^2(\triangle^{\!n}_{_a}).$ 
\eof
\end{example}

\section{Multiplication by the coordinate functions} \label{S4}

In this section, we show that the coordinate functions $z_1, \ldots, z_n$ are multipliers of $\mathscr H^2_m(\triangle^{\!n}_{_P})$ and provide upper and lower bounds for the multiplier norm of the multiplication operators $\mathscr M_{z_j},$ $j=1, \ldots, n,$ on the Hardy-Hilbert space $\mathscr H^2_m(\triangle^{\!n}_{_P}).$ 

Let $j=1, \ldots, n$ and let $\mathscr M_{z_j}$ denote the linear operator of multiplication by the coordinate function $z_j$ in $\mathscr H^2_m(\triangle^{\!n}_{_P})$:
\beqn
\mathscr M_{z_j}f=z_j f~\mbox{whenever}~f \in \mathscr H^2_m(\triangle^{\!n}_{_P})~\mbox{and}~{z_j} f \in \mathscr H^2_m(\triangle^{\!n}_{_P}).
\eeqn 
One may rewrite $z_j$ as $\prod_{k=j}^{n-1}\Big(\frac{z_k}{z_{k+1}}\Big) z_n$  and 
note that by \eqref{tilde},
\beqn
\mathscr M_{z_j} \varphi(z)^{\alpha} &=& z_j \prod_{k=1}^{n-1}\Big(\frac{z_k}{z_{k+1}}\Big)^{\alpha_k} z^{\alpha_n}_n \\
&=& \prod_{k=1}^{j-1}\Big(\frac{z_k}{z_{k+1}}\Big)^{\alpha_k} \prod_{k=j}^{n-1}\Big(\frac{z_k}{z_{k+1}}\Big)^{\alpha_k + 1}z^{\alpha_n +1}_n \\
&=& \varphi(z)^{\alpha + \sum_{k=j}^n \varepsilon_k}, \quad \alpha \in \mathbb Z^n_+.
\eeqn
This combined with \eqref{e-alpha} shows that 
\beq \label{action-basis}
\mathscr M_{z_j} e_\alpha = \frac{\sqrt{A_{P, m}(\alpha)}}{\sqrt{A_{P, m}(\alpha + \sum_{k=j}^n \varepsilon_k)}}\,e_{\alpha +  \sum_{k=j}^n \varepsilon_k}, \quad \alpha \in \mathbb Z^n_+.
\eeq
We need the following lemma in the proof of Proposition~\ref{joint-s}.
\begin{remark} 
For $\beta \in \mathbb Z^n_+,$ we claim that 
\beq 
\label{powers-orthogonal}
\mbox{the family $\{z^{\beta} e_\alpha\}_{\alpha \in \mathbb Z^n_+}$ is orthogonal in $\mathscr H^2_m(\triangle^{\!n}_{_P}).$}
\eeq
Indeed, by repeated applications of \eqref{action-basis}, we obtain  
\beqn
z^{\beta_j}_{j} e_\alpha =   \frac{\sqrt{A_{P, m}(\alpha)}}{\sqrt{A_{P, m}(\alpha + \beta_j \sum_{k=j}^n \varepsilon_k)}}\,e_{\alpha +  \beta_j \sum_{k=j}^n \varepsilon_k}, ~ \alpha \in \mathbb Z^n_+, ~j=1, \ldots, n,
\eeqn
and hence 
\beq \label{action-power-Mz}
z^{\beta} e_\alpha =   \frac{\sqrt{A_{P, m}(\alpha)}}{\sqrt{A_{P, m}(\alpha + \sum_{j=1}^n \beta_j \sum_{k=j}^n \varepsilon_k)}}\,e_{\alpha +  \sum_{j=1}^n \beta_j \sum_{k=j}^n \varepsilon_k}, ~ \alpha \in \mathbb Z^n_+.
\eeq  
This combined with the fact that $\{e_\alpha\}_{\alpha \in \mathbb Z^n_+}$ is orthogonal in $\mathscr H^2_m(\triangle^{\!n}_{_P})$ (see Proposition~\ref{on-basis}) completes the verification of \eqref{powers-orthogonal}. 
\hfill 
$\diamondsuit$
\end{remark}
It turns out that each $\mathscr M_{z_j}$ extends to a bounded linear operator on $\mathscr H^2_m(\triangle^{\!n}_{_P}).$

\begin{proposition} \label{bddness}
The multiplication $n$-tuple $\mathscr M_z = (\mathscr M_{z_1}, \ldots, \mathscr M_{z_n})$ defines a commuting $n$-tuple  on $\mathscr H^2_m(\triangle^{\!n}_{_P}).$ Moreover, 
\beq
\label{norm-estimate}
\|\mathscr M_{z_j}\| \, \Le \,  \frac{1}{\sqrt{\prod_{l=j}^n \partial_l P_l(0)}}, \quad j=1, \ldots, n.
\eeq
\end{proposition}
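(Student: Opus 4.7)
The plan is to treat $\mathscr{M}_{z_j}$ as a weighted multishift on the orthonormal basis $\{e_\alpha\}_{\alpha \in \mathbb{Z}^n_+}$ of Proposition~\ref{on-basis} and read the norm bound directly off the shift weights. Writing $\sigma_j := \sum_{k=j}^n \varepsilon_k$, the identity \eqref{action-basis} takes the form
$$\mathscr{M}_{z_j} e_\alpha = w_j(\alpha)\, e_{\alpha + \sigma_j}, \qquad w_j(\alpha) := \sqrt{\frac{A_{P,m}(\alpha)}{A_{P,m}(\alpha + \sigma_j)}}.$$
Since $\alpha \mapsto \alpha + \sigma_j$ is injective on $\mathbb{Z}^n_+$ and the $e_{\alpha + \sigma_j}$ are orthonormal, the linear operator defined on the dense span of $\{e_\alpha\}$ by this formula extends to a bounded operator on $\mathscr{H}^2_m(\triangle^{\!n}_{_P})$ if and only if $\sup_\alpha w_j(\alpha) < \infty$, in which case its operator norm equals the supremum. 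On the dense subspace the extension coincides with $z_j$-multiplication, and because $\mathscr{H}^2_m(\triangle^{\!n}_{_P})$ has continuous point evaluations (being a reproducing kernel Hilbert space), the bounded extension agrees with $\mathscr{M}_{z_j}$ on the whole space. Commutativity of the resulting $n$-tuple is then automatic: either because the $\mathscr{M}_{z_j}$ are multiplication operators on a function space, or directly from \eqref{action-basis}, as both $\mathscr{M}_{z_i} \mathscr{M}_{z_k} e_\alpha$ and $\mathscr{M}_{z_k} \mathscr{M}_{z_i} e_\alpha$ return $e_{\alpha + \sigma_i + \sigma_k}$ with the same weight product.

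The heart of the argument is therefore the lower bound
$$A_{P,m}(\alpha + \sigma_j) \Ge \Big(\prod_{l=j}^{n} \partial_l P_l(0)\Big) A_{P,m}(\alpha), \qquad \alpha \in \mathbb{Z}^n_+.$$
I would first establish the single-variable analogue: for every $l \in \{1,\ldots,n\}$ and every $\gamma \in \mathbb{Z}^n_+$,
$$A_{P_l, m_l}(\gamma + \varepsilon_l) \Ge (\partial_l P_l(0))\, A_{P_l, m_l}(\gamma).$$
This is immediate from Lemma~\ref{Spott}(i) applied to $Q = P_l$, $k = m_l \Ge 1$, at the index $\gamma + \varepsilon_l$: the summand corresponding to $\delta = \varepsilon_l$ in the recursion contributes exactly $(\partial_l P_l(0))\, A_{P_l, m_l}(\gamma)$, whose coefficient is strictly positive because $P$ is positive regular, while the remaining summands, together with $A_{P_l, m_l - 1}(\gamma + \varepsilon_l)$, are all nonnegative by Lemma~\ref{Spott}(iii).

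Finally, I would plug this into the convolution identity \eqref{rho-alpha-f} of Lemma~\ref{APM-positive}, restrict the resulting sum for $A_{P,m}(\alpha + \sigma_j)$ to those tuples $(\gamma^{(1)}, \ldots, \gamma^{(n)})$ satisfying $\gamma^{(l)}_l \Ge 1$ for every $l \in \{j, \ldots, n\}$, and change variables via $\tilde\gamma^{(l)} := \gamma^{(l)} - \varepsilon_l$ for those $l$ (setting $\tilde\gamma^{(l)} := \gamma^{(l)}$ for $l < j$). Applying the single-variable bound factor by factor and discarding the remaining nonnegative contributions to the full convolution sum yields
$$A_{P,m}(\alpha + \sigma_j) \Ge \Big(\prod_{l=j}^n \partial_l P_l(0)\Big) \sum_{\tilde\gamma^{(1)} + \cdots + \tilde\gamma^{(n)} = \alpha} \prod_{l=1}^n A_{P_l, m_l}(\tilde\gamma^{(l)}) = \Big(\prod_{l=j}^n \partial_l P_l(0)\Big) A_{P,m}(\alpha),$$
and taking square roots gives \eqref{norm-estimate}. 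The only mildly delicate point I anticipate is the bookkeeping that aligns the shift $\sigma_j$ with the $n$-fold convolution so that precisely the factors indexed by $l \Ge j$ are activated; once the single-variable estimate is in hand, the rest is a manipulation of nonnegative series.
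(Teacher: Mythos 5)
Your proof is correct and follows essentially the same route as the paper's: view $\mathscr M_{z_j}$ as a weighted shift on the orthonormal basis $\{e_\alpha\}$ with orthogonal images, reduce the norm bound to the inequality $A_{P,m}(\alpha+\sum_{k=j}^n\varepsilon_k)\Ge \big(\prod_{l=j}^n\partial_lP_l(0)\big)A_{P,m}(\alpha)$, and prove that via the convolution identity \eqref{rho-alpha-f} together with the single-variable recursion of Lemma~\ref{Spott}(i),(iii) (the paper's \eqref{S-lemma} and \eqref{APML}). Your explicit restrict-and-discard handling of the change of variables in the convolution sum is, if anything, a slightly more careful rendering of the paper's own step.
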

\begin{proof}
For $\alpha \in \mathbb Z^n_+,$ by \eqref{rho-alpha-f}, 
\allowdisplaybreaks
\beq \label{S-lemma}
&& A_{P, m}\Big(\alpha + \sum_{k=j}^n \varepsilon_{k}\Big) \\ &=& \notag \displaystyle \sum_{{\substack{\gamma^{(1)}, \ldots, \gamma^{(n)} \in \mathbb Z^n_+ \\  \sum_{k=1}^{j-1} \gamma^{(k)} +  \sum_{k=j}^n (\gamma^{(k)} -\varepsilon_{k}) = \alpha}}} \prod_{l=1}^n A_{P_l, m_l}(\gamma^{(l)}) 
\\
&=& \sum_{{\substack{\gamma^{(1)}, \ldots, \gamma^{(n)} \in \mathbb Z^n_+ \\ \sum_{k=1}^{n} \gamma^{(k)} = \alpha}}} \Big(\prod_{l=1}^{j-1} A_{P_l, m_l}(\gamma^{(l)})\Big)\Big(\prod_{l=j}^n A_{P_l, m_l}(\gamma^{(l)}+\varepsilon_l) \Big). \notag 
\eeq
Also, by (i) and (iii) of Lemma~\ref{Spott} (applied to $Q=P_l$), 
\beq  \label{APML}
\notag 
A_{P_l, m_l}(\alpha+\varepsilon_l)  &=& A_{P_l, m_l - 1}(\alpha +\varepsilon_l) + \displaystyle \sum_{\gamma \in \mathbb Z^n_+} q^{(l)}_{\gamma} \, A_{P_l, m_l}(\alpha+\varepsilon_l - \gamma) \\ &\Ge & q^{(l)}_{\varepsilon_l} \, A_{P_l, m_l}(\alpha), \quad l=1, \ldots, n, ~\alpha \in \mathbb Z^n_+. \eeq
Replacing $\alpha $ by $\gamma^{(l)}$ and applying \eqref{rho-alpha-f} and \eqref{S-lemma} gives
\beqn
A_{P, m}\Big(\alpha + \sum_{k=j}^n \varepsilon_{k}\Big) & \Ge & \Big(\prod_{l=j}^n q^{(l)}_{\varepsilon_l}\Big) \sum_{{\substack{\gamma^{(1)}, \ldots, \gamma^{(n)} \in \mathbb Z^n_+ \\ \sum_{k=1}^{n} \gamma^{(k)} = \alpha}}} \Big(\prod_{l=1}^n A_{P_l, m_l}(\gamma^{(l)})\Big) \\ &=& \Big(\prod_{l=j}^n q^{(l)}_{\varepsilon_l}\Big) A_{P, m}(\alpha). 
\eeqn
Since $q^{(l)}_{\varepsilon_l}=\partial_l P_l(0),$ $l=1, \ldots, n$ (see Definition~\ref{reg-def}(ii)), it now follows from \eqref{action-basis} that for $j=1, \ldots, n,$ 
\beq \label{norm-at-0}
\|\mathscr M_{z_j}e_\alpha\| = \frac{\sqrt{A_{P, m}(\alpha)}}{\sqrt{A_{P, m}(\alpha + \sum_{k=j}^n \varepsilon_k)}} \Le \frac{1}{\sqrt{\prod_{l=j}^n \partial_l P_l(0)}}, \quad \alpha \in \mathbb Z^n_+,
\eeq
where $e_\alpha$ is given by \eqref{e-alpha}. 
Since $\{e_\alpha\}_{\alpha \in \mathbb Z^n_+}$ forms an orthonormal basis for $\mathscr H^2_m(\triangle^{\!n}_{_P})$ (see Proposition~\ref{on-basis}) and $\{\mathscr M_{z_j} e_\alpha\}_{\alpha \in \mathbb Z^n_+}$ forms an orthogonal set  for every $j=1, \ldots, n$ (see \eqref{powers-orthogonal}),  
we obtain \eqref{norm-estimate}. 
\end{proof}

For $n$-admissible polynomial $n$-tuples, we can also provide a lower bound for the operator norm of the multiplication by a coordinate function.
\begin{corollary} \label{norm-attained} 
Assume that $P$ is $n$-admissible. Then the multiplication $n$-tuple $\mathscr M_z = (\mathscr M_{z_1}, \ldots, \mathscr M_{z_n})$ on $\mathscr H^2_m(\triangle^{\!n}_{_P})$ satisfies 
\beqn
\frac{1}{\sqrt{\prod_{l=j}^nm_l \partial_l P_l(0)}} \Le \|\mathscr M_{z_j}\| \, \Le \, \frac{1}{\sqrt{\prod_{l=j}^n \partial_l P_l(0)}}, \quad j=1, \ldots, n.
\eeqn
Moreover, if $r_j= \frac{1}{\sqrt{\prod_{l=j}^n \partial_l P_l(0)}},$ $j=1, \ldots, n,$ then the following are valid$:$
\begin{itemize} 
\item[$\mathrm{(i)}$] 
$\sigma(\mathscr M_z) \subseteq \overline{\mathbb D}(0, r_1) \times \cdots \times \overline{\mathbb D}(0, r_n),$
\item[$\mathrm{(ii)}$] 
if $m_l=1, ~l=1, \ldots, n,$ then $\|\mathscr M_{z_j}\| =r_j.$
\end{itemize}
\end{corollary}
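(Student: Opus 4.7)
The upper bound is already provided by Proposition~\ref{bddness}, so the plan is to establish the lower bound first and then assemble (i) and (ii). For the lower bound, I would test $\mathscr M_{z_j}$ against the leading orthonormal basis vector $e_0 = 1/\prod_{k=2}^n z_k$. By \eqref{norm-at-0} at $\alpha = 0$ combined with \eqref{rho-alpha-f-new},
\[
\|\mathscr M_{z_j} e_0\|^2 \;=\; \frac{1}{A_{P,m}\bigl(\sum_{k=j}^n \varepsilon_k\bigr)},
\]
so the task reduces to showing that $A_{P,m}(\sum_{k=j}^n \varepsilon_k)$ equals $\prod_{l=j}^n m_l\,\partial_l P_l(0)$ exactly.

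To carry out this computation, I plan to exploit $n$-admissibility as follows. By Remark~\ref{rmk-d-admissible}, each $P_l$ splits as $P_l(z) = \widetilde P_l(z_l) + R_l(z)$, where $\widetilde P_l$ is a single-variable polynomial in $z_l$ collecting the Taylor terms of $P_l$ of degree $\Le n$, while $R_l$ collects the remaining (necessarily mixed) terms, each of total degree $> n$. Expanding
\[
\frac{1}{(1-P_l(z))^{m_l}} \;=\; \sum_{k=0}^{\infty} \binom{m_l+k-1}{k} P_l(z)^k,
\]
I observe that any contribution to a monomial of total degree at most $n$ must come from $k \Le n$, and that in each such $P_l^k$ any factor of $R_l$ would push the total degree strictly above $n$. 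Consequently $A_{P_l, m_l}(\gamma) = 0$ for every $\gamma \in \mathbb Z^n_+$ with $|\gamma| \Le n$ unless $\gamma$ is a nonnegative multiple of $\varepsilon_l$. Plugging this into the convolution identity \eqref{rho-alpha-f} for $\alpha = \sum_{k=j}^n \varepsilon_k$ (which satisfies $|\alpha| = n-j+1 \Le n$) forces each $\gamma^{(l)} = c_l\,\varepsilon_l$, and the constraint $\sum_l c_l\,\varepsilon_l = \alpha$ uniquely selects $c_l = 1$ for $j \Le l \Le n$ and $c_l = 0$ otherwise. Thus $A_{P,m}(\alpha) = \prod_{l=j}^n A_{P_l, m_l}(\varepsilon_l)$, and since the coefficient of $z_l$ in $(1-P_l(z))^{-m_l}$ is visibly $m_l\,\partial_l P_l(0)$, the claimed lower bound drops out.

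For item (i), I would invoke the projection property of the Taylor spectrum: applying the spectral mapping theorem to the coordinate polynomial $z_j$ yields $\pi_j(\sigma(\mathscr M_z)) \subseteq \sigma(\mathscr M_{z_j}) \subseteq \overline{\mathbb D}(0, \|\mathscr M_{z_j}\|) \subseteq \overline{\mathbb D}(0, r_j)$, whence $\sigma(\mathscr M_z) \subseteq \prod_{j=1}^n \overline{\mathbb D}(0, r_j)$. Item (ii) is then immediate, since $m_l=1$ for every $l$ makes the just-obtained lower bound coincide with the upper bound $r_j$ from Proposition~\ref{bddness}. The principal obstacle is the combinatorial step in the second paragraph, where $n$-admissibility is crucially exploited to collapse the multi-variable convolution defining $A_{P,m}$ into an essentially single-variable power-series computation in degrees up to $n$.
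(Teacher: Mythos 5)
Your proposal is correct and follows essentially the same route as the paper: the lower bound is obtained by evaluating $\mathscr M_{z_j}$ on $e_0$ and computing $A_{P,m}(\sum_{k=j}^n\varepsilon_k)=\prod_{l=j}^n m_l\,\partial_l P_l(0)$ by using $n$-admissibility (via \eqref{eqn-d-admissible}) to kill all coefficients $A_{P_l,m_l}(\gamma)$ with $|\gamma|\Le n$ not supported on $\varepsilon_l$, and then collapsing the convolution \eqref{rho-alpha-f}; parts (i) and (ii) are handled exactly as in the paper. Your decomposition $P_l=\widetilde P_l(z_l)+R_l$ is just a repackaging of the same Taylor-expansion argument.
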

\begin{proof}
Fix $j=1, \ldots, n.$ 
It follows from the first equality of \eqref{norm-at-0} that  
\beq \label{lower-bd-Mz}
\|\mathscr M_{z_j}\| \Ge \frac{\sqrt{A_{P, m}(0)}}{\sqrt{A_{P, m}(\sum_{k=j}^n \varepsilon_k)}} \overset{\eqref{rho-alpha-f-new}}=  \frac{1}{\sqrt{A_{P, m}(\sum_{k=j}^n \varepsilon_k)}}.
\eeq
To compute $A_{P, m}(\sum_{k=j}^n \varepsilon_k),$ consider 
\beq \label{admissible-norm}
&& \sum_{\alpha \in \mathbb Z^n_+} A_{P_j, m_j}(\alpha) {z}^\alpha 
\overset{\eqref{coeff-Pj-mj}}=
\frac{1}{(1-P_j(z))^{m_j}} \notag \\ 
&=&   1 + m_j P_j(z) + \sum_{k=2}^\infty \binom{m_j+k-1}{m_j-1} P_j(z)^k, 
\eeq
where all the series above converge in a neighborhood of the origin. 
Since $P$ is $n$-admissible, by \eqref{eqn-d-admissible}, for every $j=1, \ldots, n,$
\beqn
P_j(z) = \sum_{k=1}^n \frac{(\partial^k_j P_j)(0)}{k!}z^k_j + \displaystyle \sum_{k = n+1}^{N_j}Q_{jk}(z), \quad z=(z_1, \ldots, z_n) \in \mathbb C^n,
\eeqn
where $Q_{jk}$ is a homogeneous polynomial of degree $k.$ 
This combined with \eqref{admissible-norm} shows that
\beqn
A_{P_j, m_j}(\alpha) = \begin{cases} 0 & \mbox{if~}|\alpha| \Le n, ~\alpha \neq k \varepsilon_j, ~k =1, \ldots, n, \\
m_j \partial_j P_j(0) & \mbox{if~}\alpha = \varepsilon_j.
\end{cases} 
\eeqn
It is now clear from \eqref{rho-alpha-f} that 
\beqn
A_{P, m}(\sum_{k=j}^n \varepsilon_k)=
\prod_{k=j}^n A_{P_k, m_k}(\varepsilon_k) =
\prod_{k=j}^nm_j \partial_j P_j(0). 
\eeqn
This combined with \eqref{norm-estimate} and \eqref{lower-bd-Mz} yields the first half. This also gives part (ii).  The part (i) follows from the norm estimate \eqref{norm-estimate} and the following fact$:$ For any commuting $n$-tuple $T=(T_1, \ldots, T_n)$ on $H,$ 
$$\sigma(T) \subseteq \sigma(T_1) \times \cdots \times \sigma(T_n) \subseteq \overline{\mathbb D}(0, \|T_1\|) \times \cdots \times \overline{\mathbb D}(0, \|T_n\|)$$ 
(see \cite[Lemma~4.6]{C1988}).
\end{proof}
\begin{remark} \label{noeigen}
Note that none of $\mathscr M_{z_j},$ $j=1, \ldots, n,$ has any eigenvalue. To see this, assume that for some $j=1, \ldots, n,$ $\lambda \in \mathbb C$ and $f \in \mathscr H^2_m(\triangle^{\!n}_{_P}),$ $\mathscr M_{z_j}f=\lambda f,$ and note that  
\beqn
(z_j-\lambda)f(z) =0, \quad z \in \triangle^{\!n}_{_P},
\eeqn
and hence $f=0$ on $\triangle^{\!n}_{_P} \backslash Z(z_j-\lambda).$ Since $\triangle^{\!n}_{_P}$ is connected (see Proposition~\ref{DoH}) and $f$ is continuous, $f$ is identically zero. In particular, the point spectrum of $\mathscr M_z$ is empty. \hfill $\diamondsuit$
\end{remark}

For $j=1, \ldots, n,$ let $\mathscr W_j$ denote the linear operator defined by 
\beq
\label{wt-shift-action}
\mathscr W_j e_{\alpha} =  \frac{\sqrt{A_{P, m}(\alpha)}}{\sqrt{A_{P, m}(\alpha + \varepsilon_j)}}\,e_{\alpha +  \varepsilon_j}, \quad \alpha \in \mathbb Z^n_+,
\eeq 
which is extended linearly to the linear span of $e_\alpha,$ $\alpha \in \mathbb Z^n_+.$ 
As in the proof of Proposition~\ref{bddness},  it is easy to see from \eqref{rho-alpha-f} (with $\alpha$ replaced by $\alpha + \varepsilon_j$) and \eqref{APML} 
that 
\beqn
\frac{\sqrt{A_{P, m}(\alpha)}}{\sqrt{A_{P, m}(\alpha + \varepsilon_j)}} \Le 
\frac{1}{\sqrt{\partial_j P_j(0)}}, \quad j=1, \ldots, n. 
\eeqn
It follows that 
$\mathscr W_1, \ldots, \mathscr W_n$ extend to bounded linear operators on $\mathscr H^2_m(\triangle^{\!n}_{_P}).$ Clearly, $\mathscr W=(\mathscr W_1, \ldots, \mathscr W_n)$ is a commuting $n$-tuple. 
Moreover, a routine verification using \eqref{action-basis} and \eqref{wt-shift-action} shows that 
\beq \label{shift-multi}
\mathscr M_{z_j}  = \prod_{k=j}^n \mathscr W_k, \quad j = 1, \ldots, n.
\eeq
We refer to the commuting $n$-tuple $\mathscr W$ as {\it the unilateral weighted multishift associated with} $\mathscr M_z.$ 

By Proposition~\ref{bddness}, for every $j=1, \ldots, n,$ the Hilbert space adjoint  $\mathscr M^*_{z_j}$ of $\mathscr M_{z_j}$ defines a bounded linear operator on $\mathscr H^2_m(\triangle^{\!n}_{_P}).$
Let $\alpha \in \mathbb Z^n_+$ and $j=1, \ldots, n.$
Note that 
\beqn
\mathscr M^*_{z_j} e_{\alpha} = \prod_{k=j}^n \mathscr W^*_k e_{\alpha} =
\begin{cases} 0 & ~\mbox{if~}\alpha_n =0, \\ \Big(\prod_{k=j}^{n-1} \mathscr W^*_k\Big) \frac{\sqrt{A_{P, m}(\alpha-\varepsilon_n)}}{\sqrt{A_{P, m}(\alpha)}}\,e_{\alpha -  \varepsilon_n} & ~\mbox{otherwise}.
\end{cases}
\eeqn
Continuing this,  we obtain 
\beq \label{action-adjoint}
\mathscr M^*_{z_j} e_{\alpha} = \begin{cases}  0 & \mbox{if}~ \alpha - \sum_{k=j}^n \varepsilon_k \notin \mathbb Z^n_+, \\
\frac{\sqrt{A_{P, m}(\alpha - \sum_{k=j}^n \varepsilon_k)}}{\sqrt{A_{P, m}(\alpha)}}\,e_{\alpha -  \sum_{k=j}^n \varepsilon_k} & \mbox{otherwise}.
\end{cases}
\eeq
\begin{remark} \label{after-action-adjoint}
It turns out that $\sigma_p(\mathscr M^*_z)$ is disjoint from $$\{w \in \mathbb C^n : w_j \neq 0\,\mbox{and\,}w_{k}=0\,\mbox{for some}\,1 \Le j < k \Le n\}.$$ Indeed, by \eqref{shift-multi}, $\mathscr M^*_{z_j} = \Big(\prod_{l=j}^{k-1}\mathscr W^*_l\Big) \mathscr M^*_{z_k},$ and hence $\ker \mathscr M^*_{z_k} \subseteq \ker \mathscr M^*_{z_j}$ for $1 \Le j < k \Le n,$ which implies that if $w_k=0,$ then $w_j=0.$  \hfill $\diamondsuit$
\end{remark}

Here is another instance in which the notion of the unilateral weighted multishift associated with $\mathscr M_z$ yields insight into the spectral theory of $\mathscr M_z.$  
\begin{proposition} \label{spectral-coro-new}
The Taylor spectrum of the multiplication $n$-tuple $\mathscr M_z$ on $\mathscr H^2_m(\triangle^{\!n}_{_P})$ is a connected subset of $\mathbb C^n$ that contains $\overline{\triangle^{\!n}_{_P}}.$
\end{proposition}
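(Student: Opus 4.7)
The plan is to prove the two assertions separately: the inclusion $\overline{\triangle^{\!n}_{_P}} \subseteq \sigma(\mathscr M_z)$ via a reproducing kernel computation, and the connectedness by reducing to the Taylor spectrum of the unilateral weighted multishift $\mathscr W$ defined in \eqref{wt-shift-action}, together with the factorization \eqref{shift-multi}.

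For the inclusion, fix $w \in \triangle^{\!n}_{_P}.$ The vector $\mathscr K_{_{P, m}}(\cdot, w)$ is a nonzero element of $\mathscr H^2_m(\triangle^{\!n}_{_P})$ (its squared norm equals $\mathscr K_{_{P, m}}(w, w) > 0$ by \eqref{exp-2}). Applying the reproducing property \eqref{rp} with $z_j f$ in place of $f$ yields, for every $f \in \mathscr H^2_m(\triangle^{\!n}_{_P})$ and every $j=1,\ldots,n,$
\[
\langle \mathscr M^*_{z_j} \mathscr K_{_{P, m}}(\cdot, w),\, f \rangle = \overline{w_j\, f(w)} = \langle \overline{w}_j\, \mathscr K_{_{P, m}}(\cdot, w),\, f \rangle,
\]
whence $\mathscr M^*_{z_j} \mathscr K_{_{P, m}}(\cdot, w) = \overline{w}_j\, \mathscr K_{_{P, m}}(\cdot, w).$ Thus $\overline{w} \in \sigma_p(\mathscr M^*_z) \subseteq \sigma(\mathscr M^*_z),$ and by the standard conjugation property of the Taylor spectrum, $w \in \sigma(\mathscr M_z).$ Closedness of the Taylor spectrum then gives $\overline{\triangle^{\!n}_{_P}} \subseteq \sigma(\mathscr M_z).$

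For the connectedness, \eqref{shift-multi} realizes $\mathscr M_z$ as a polynomial image of $\mathscr W$: with
\[
p \colon \mathbb C^n \rar \mathbb C^n,\qquad p(\lambda) = \Bigl(\prod_{k=1}^n \lambda_k,\; \prod_{k=2}^n \lambda_k,\; \ldots,\; \lambda_n\Bigr),
\]
one has $\mathscr M_z = p(\mathscr W).$ The polynomial spectral mapping theorem for the Taylor spectrum (see \cite[Chapter~4]{C1988}) gives $\sigma(\mathscr M_z) = p\bigl(\sigma(\mathscr W)\bigr),$ and since $p$ is continuous, connectedness of $\sigma(\mathscr M_z)$ reduces to that of $\sigma(\mathscr W).$ The main obstacle is precisely this reduction; my plan is to show that $\sigma(\mathscr W)$ is a compact Reinhardt subset of $\mathbb C^n$ that is star-shaped with respect to the origin. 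The $\mathbb T^n$-invariance comes from the unitaries $U_\zeta$ on $\mathscr H^2_m(\triangle^{\!n}_{_P})$ defined by $U_\zeta e_\alpha = \zeta^\alpha e_\alpha$ for $\zeta \in \mathbb T^n,$ which by \eqref{wt-shift-action} satisfy $U_\zeta \mathscr W_j U_\zeta^* = \zeta_j \mathscr W_j,$ whence $\sigma(\mathscr W) = \sigma(\zeta \diamond \mathscr W) = \zeta \diamond \sigma(\mathscr W)$ for every $\zeta \in \mathbb T^n.$ The containment $0 \in \sigma(\mathscr W)$ is immediate from $e_0 \in \bigcap_{j=1}^n \ker \mathscr W_j^*.$ The delicate point is the star-shape, for which I would appeal to the classical description of the Taylor spectrum of a unilateral weighted multishift as the closure of a complete Reinhardt open set, verified by constructing explicit approximate joint eigenvectors of $\mathscr W$ from the weight sequence $\{A_{_{P, m}}(\alpha)\}_\alpha.$ Once star-shape is granted, every $\lambda \in \sigma(\mathscr W)$ is joined to $0$ by the radial segment $t \mapsto t\lambda,$ $t \in [0,1],$ lying inside $\sigma(\mathscr W),$ yielding pathwise connectedness and concluding the proof.
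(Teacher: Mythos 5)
Your proof of the inclusion $\overline{\triangle^{\!n}_{_P}} \subseteq \sigma(\mathscr M_z)$ is correct and coincides with the paper's: the reproducing property gives $\mathscr M^*_{z_j}\mathscr K_{_{P,m}}(\cdot,w)=\overline{w}_j\,\mathscr K_{_{P,m}}(\cdot,w)$, hence $\triangle^{\!n}_{_P}\subseteq\sigma_p(\mathscr M^*_z)$, and the conjugation property plus closedness of the Taylor spectrum finish it. Your reduction of connectedness to that of $\sigma(\mathscr W)$ via the factorization \eqref{shift-multi} and the spectral mapping theorem is also exactly the paper's route (the paper writes your polynomial map $p$ as $\varphi^{-1}$).

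The gap is precisely in the step you flag as delicate: connectedness of $\sigma(\mathscr W)$ via star-shapedness. There is no ``classical description of the Taylor spectrum of a unilateral weighted multishift as the closure of a complete Reinhardt open set''; this is false even for doubly commuting multishifts. For instance, take $W_1 = S\otimes I$ with $S$ a quasinilpotent unilateral weighted shift and $W_2 = I\otimes U$ with $U$ the unweighted shift; by the Ceau\c{s}escu--Vasilescu tensor-product formula, $\sigma(W_1,W_2)=\{0\}\times\overline{\mathbb D},$ which is not the closure of any open subset of $\mathbb C^2$ (it happens to be star-shaped, but not for the reason you give). Moreover, constructing approximate joint eigenvectors can only produce points that \emph{belong} to $\sigma(\mathscr W)$, i.e., a lower bound; to deduce that the whole of $\sigma(\mathscr W)$ is star-shaped you would also need to control $\sigma(\mathscr W)$ from above, and nothing in your sketch addresses that direction. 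Until this is supplied, the connectedness half of your argument is incomplete. The paper sidesteps the issue entirely by invoking the known connectedness of the Taylor spectrum of a classical multishift, \cite[Proposition~3.2.4]{CPT2017}; either cite that result or give an actual proof of connectedness of $\sigma(\mathscr W)$.
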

\begin{proof}
An application of the spectral mapping property (see \cite[Corollary~3.5]{C1988}) together with \eqref{shift-multi} shows that
\beqn
\sigma(\mathscr M_z) = \Big\{\Big(\prod_{j=1}^n z_j, \prod_{j=2}^n z_j, \ldots, z_n\Big) \in \mathbb C^n : z \in \sigma(\mathscr W)\Big\} \overset{\eqref{phi-inverse}}= \varphi^{-1}\Big(\sigma(\mathscr W)\Big).
\eeqn
Since $\sigma(\mathscr W)$ is connected (see \cite[Proposition 3.2.4]{CPT2017}) and $\varphi^{-1}$ is continuous on $\mathbb C^n,$ $\sigma(\mathscr M_z)$ is connected.  
To see the remaining part, note that the  reproducing property \eqref{rp} of $\mathscr K_{_{P, m}}$ yields the following$:$
\beq \label{eigen-v}
\mathscr M^*_{z_j} \mathscr K_{_{P, m}}(\cdot, w)=\overline{w}_j \mathscr K_{_{P, m}}(\cdot, w), \quad w \in \triangle^{\!n}_{_P}, ~j=1, \ldots, n.
\eeq
This gives the inclusion $\triangle^{\!n}_{_P} \subseteq \sigma_p(\mathscr M^*_z).$ It follows that 
\beqn
\triangle^{\!n}_{_P} \subseteq \sigma(\mathscr M^*_z)=\{z \in \mathbb C^n : \overline{z} \in \sigma(\mathscr M_z)\}.
\eeqn
Since
$\triangle^{\!n}_{_P}$ is a Reinhardt domain and $\sigma(\mathscr M_z)$ is closed in $\mathbb C^n,$ $\overline{\triangle^{\!n}_{_P}} \subseteq \sigma(\mathscr M_z),$ which completes the proof. 
\end{proof}

\section{Circularity, trace estimates and contractivity} \label{S5} 

In this section, we discuss several basic properties of the multiplication $n$-tuple $\mathscr M_z$ on $\mathscr H^2_m(\triangle^{\!n}_{_P}).$ These include failure of doubly commutativity, finite cyclicity and essential normality. We also discuss 
circularity, trace-class membership of the determinant operator and $\triangle^{\!n}_{_P}$-contractivity.

Let $T=(T_1, \ldots, T_n)$ be a commuting $n$-tuple on $H.$ We say that $T$ is 
\begin{itemize}
\item[\tiny $\bullet$] {\it doubly commuting} if $[T^*_j, T_k]=0$ for every $1 \Le j \neq k \Le n,$
\item[\tiny $\bullet$] {\it essentially normal} if $[T^*_j, T_k]$ is compact for every $j, k =1, \ldots, n,$
\item[\tiny $\bullet$] {\it finitely rationally cyclic} if for some finite subset $F$ of $H,$ 
\begin{equation*}
H = \bigvee \{r(T)h : r ~\mbox{is a rational function with poles off}~\sigma(T)~\mbox{and}~h \in F\}.
\end{equation*}
\end{itemize}  
Following \cite{MPS2022}, we define the {\it determinant operator} $\det \big([T^*, T]\big)$ of a commuting pair $T=(T_1, T_2)$ by
\beq \label{det-def} \det \big([T^*, T]\big)  \notag
&=& [T^*_1, T_1][T^*_2, T_2]+[T^*_2, T_2][T^*_1, T_1] \\ &-& [T^*_1, T_2][T^*_2, T_1]-[T^*_2, T_1][T^*_1, T_2].
\eeq
The reader is referred to \cites{AW1990, C1988, MPS2022} for some basic facts pertaining to rational cyclicity, essential normality  and related notions.

The following proposition summarizes some properties of the multiplication $n$-tuples $\mathscr M_z$ on $\mathscr H^2_m(\triangle^{\!n}_{_P}),$ which makes them different from their polydisc counterpart.  
\begin{proposition} \label{not-F} Let $\mathscr M_z$ be the multiplication $n$-tuple on $\mathscr H^2_m(\triangle^{\!n}_{_P}).$
The following statements are valid$:$
\begin{itemize}
\item[$\mathrm{(i)}$] $\mathscr M_z$ is never doubly commuting,
\item[$\mathrm{(ii)}$] 
$\mathscr M_z$ is never finitely rationally cyclic.
\end{itemize}
\end{proposition}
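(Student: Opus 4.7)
The plan is to exploit the unilateral weighted multishift $\mathscr W$ and the factorization $\mathscr M_{z_j} = \prod_{k=j}^n \mathscr W_k$ recorded in \eqref{shift-multi}, so that both parts reduce to reading off the action of $\mathscr M_{z_j}$ and $\mathscr M^*_{z_j}$ on the orthonormal basis $\{e_\alpha\}_{\alpha \in \mathbb Z^n_+}$ via \eqref{action-basis} and \eqref{action-adjoint}, together with the strict positivity of the coefficient-function $A_{P,m}$ provided by Lemma~\ref{APM-positive}.

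For part~(i) it suffices to exhibit a single nonzero commutator $[\mathscr M^*_{z_j}, \mathscr M_{z_k}]$ with $j \neq k$. I would test $[\mathscr M^*_{z_n}, \mathscr M_{z_1}]$ on the vector $e_0$. Since $-\varepsilon_n \notin \mathbb Z^n_+$, formula \eqref{action-adjoint} gives $\mathscr M^*_{z_n} e_0 = 0$, whence $\mathscr M_{z_1}\mathscr M^*_{z_n} e_0 = 0$. On the other hand, \eqref{action-basis} combined with $A_{P,m}(0)=1$ yields $\mathscr M_{z_1} e_0 = A_{P,m}(\mathbf 1)^{-1/2}\, e_{\mathbf 1}$, where $\mathbf 1 = \sum_{k=1}^n \varepsilon_k$, and since $\mathbf 1 - \varepsilon_n \in \mathbb Z^n_+$, a further application of $\mathscr M^*_{z_n}$ produces a strictly positive multiple of $e_{\mathbf 1 - \varepsilon_n}$. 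Consequently $\mathscr M^*_{z_n}\mathscr M_{z_1} e_0 \neq 0$, and $\mathscr M_z$ fails to be doubly commuting.

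For part~(ii), the first step is to read off from \eqref{action-adjoint} that $\mathscr M^*_{z_j} e_\alpha = 0$ exactly when $\alpha_k = 0$ for some $k \in \{j, \ldots, n\}$; in particular $\alpha_n = 0$ forces $\mathscr M^*_{z_j} e_\alpha = 0$ for every $j$. Hence $\ker \mathscr M^*_z$ contains the closed linear span of $\{e_\alpha : \alpha_n = 0\}$ and is therefore infinite-dimensional, which already gives $0 \in \sigma_p(\mathscr M^*_z)$ and hence $0 \in \sigma(\mathscr M_z)$. Suppose now, for contradiction, that $\mathscr H^2_m(\triangle^{\!n}_{_P}) = \bigvee \{r(\mathscr M_z) h_i : r \textrm{ rational with poles off } \sigma(\mathscr M_z),\ i=1, \ldots, k\}$ for some finite set $\{h_1, \ldots, h_k\}$. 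Because $0 \in \sigma(\mathscr M_z)$, every admissible $r$ is holomorphic at the origin; writing $r=p/q$ with $q(0) \neq 0$ and noting that $\mathscr M^{*\alpha}_z v = 0$ for every $\alpha \neq 0$ whenever $v \in \ker \mathscr M^*_z$, one obtains $r(\mathscr M_z)^* v = \overline{r(0)}\, v$ for all such $v$. Letting $P$ denote the orthogonal projection onto $\ker \mathscr M^*_z$, this yields $P r(\mathscr M_z) h_i = r(0)\, P h_i$, so the image under $P$ of the rationally cyclic span is contained in the $k$-dimensional subspace $\mathrm{span}\{P h_1, \ldots, P h_k\}$; by density and continuity of $P$ this forces $\dim \ker \mathscr M^*_z \leq k$, contradicting the infinite-dimensionality established above.

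The only delicate point is the identity $r(\mathscr M_z)^* v = \overline{r(0)}\, v$ for $v \in \ker \mathscr M^*_z$, which amounts to justifying the rational functional calculus for the commuting tuple $\mathscr M_z$ and its compatibility with taking adjoints; apart from this routine bookkeeping, both parts are immediate consequences of the weighted shift structure of $\mathscr M_z$ and the positivity of $A_{P,m}$.
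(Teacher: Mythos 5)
Your proposal is correct, and both parts rest on the same structural facts the paper uses: the action formulas \eqref{action-basis} and \eqref{action-adjoint} on the orthonormal basis $\{e_\alpha\}$, the strict positivity of $A_{P,m}$ on $\mathbb Z^n_+$, and the observation that $\ker\mathscr M^*_z \supseteq \bigvee\{e_\alpha : \alpha_n=0\}$ is infinite-dimensional. For (i) you test the commutator $[\mathscr M^*_{z_n},\mathscr M_{z_1}]$ at $e_0$ while the paper tests $[\mathscr M^*_{z_{n-1}},\mathscr M_{z_n}]$ at a suitable $e_\alpha$; this is the same technique with a different (and if anything cleaner) choice of indices. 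The only genuine divergence is in (ii): the paper, after computing $\ker\mathscr M^*_z$, simply invokes Proposition~1.1 of Albrecht--Wirtz to conclude that a $k$-rationally cyclic tuple cannot have a joint cokernel of dimension exceeding $k$, whereas you prove the needed special case from scratch by showing $r(\mathscr M_z)^*v=\overline{r(0)}\,v$ for $v\in\ker\mathscr M^*_z$ (using $0\in\sigma(\mathscr M_z)$ so that the denominator of $r$ does not vanish at the origin, a fact also guaranteed by Proposition~\ref{spectral-coro-new} since $0\in\overline{\triangle^{\!n}_{_P}}$) and then projecting the cyclic span onto $\ker\mathscr M^*_z$. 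Your argument is self-contained and elementary at the cost of a few lines justifying the rational hereditary calculus and its adjoint; the paper's is shorter but outsources exactly this point to the literature. One cosmetic remark: reserve a symbol other than $P$ for the orthogonal projection onto $\ker\mathscr M^*_z$, since $P$ already denotes the polynomial tuple.
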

\begin{proof}  
(i) Let $\alpha \in \mathbb Z^n_+.$ If $\alpha -  \varepsilon_{n} \notin \mathbb Z^n_+,$ then  
by \eqref{action-adjoint}, 
$\mathscr M_{z_n}\mathscr M^*_{z_{n-1}}(e_\alpha) = 0.$ If $\alpha -  \varepsilon_{n-1} \in \mathbb Z^n_+,$ then 
\beqn
\mathscr M^*_{z_{n-1}}\mathscr M_{z_n}(e_\alpha) &\overset{\eqref{action-basis}}=& \frac{\sqrt{A_{P, m}(\alpha)}}{\sqrt{A_{P, m}(\alpha + \varepsilon_n)}}\, \mathscr M^*_{z_{n-1}}(e_{\alpha +  \varepsilon_n}) \\
&\overset{\eqref{action-adjoint}}=& 
\frac{\sqrt{A_{P, m}(\alpha)}}{\sqrt{A_{P, m}(\alpha + \varepsilon_n)}}\, \frac{\sqrt{A_{P, m}(\alpha - \varepsilon_{n-1})}}{\sqrt{A_{P, m}(\alpha+\varepsilon_n)}}\,e_{\alpha -   \varepsilon_{n-1}},
\eeqn 
which is clearly nonzero.

(ii) By \eqref{action-adjoint}, 
\beq \label{0-pt-s}
\ker(\mathscr M^*_z) = \bigvee \{e_{\alpha} : \alpha \in \mathbb Z^n_+, \, \alpha_n =0\}.
\eeq
Thus the joint kernel of $\mathscr M^*_z$ is of infinite dimension. One may now apply \cite[Proposition~1.1]{AW1990}. 
\end{proof}

If $\Omega$ is a complete Reinhardt domain in $\mathbb C^2,$ which is not pseudoconvex, then the multiplication $2$-tuple on the Bergman space of $\Omega$ is not essentially normal (see \cite[Theorem~4.9(g)]{CS1985}).  
Interestingly, the following proposition yields a
non-complete pseudoconvex Reinhardt domain $\triangle^{\!n}_{_P}$ (see Proposition~\ref{always-DoH})
for which the multiplication $n$-tuple $\mathscr M_z$ on $\mathscr H^2_m(\triangle^{\!n}_{_P})$ is not essentially normal. 
\begin{proposition} 
\label{not-en}
If $P=(P_1, \ldots, P_n)$ is a positive regular polynomial $n$-tuple such that for some $i=1, \ldots, n-1,$ 
\beq \label{condition-n-ess}
\partial_n P_i=0~\mbox{and}~ \partial_i P_j =0, \quad 1 \Le j \neq i \Le n,
\eeq 
then the multiplication operator $\mathscr M_{z_n}$ on $\mathscr H^2_m(\triangle^{\!n}_{_P})$ is not essentially normal.
\end{proposition}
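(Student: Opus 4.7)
\medskip

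\noindent\textbf{Proof proposal.} The plan is to show that the self-commutator $[\mathscr M^*_{z_n},\mathscr M_{z_n}]$ is diagonal with respect to the orthonormal basis $\{e_\alpha\}_{\alpha\in\mathbb Z^n_+}$, and to exhibit an infinite subsequence of basis vectors on which its eigenvalue is a fixed positive constant. Since a compact operator has finite-dimensional eigenspaces at every nonzero eigenvalue, this will rule out essential normality.

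First, combining \eqref{action-basis} (with $j=n$, so that $\sum_{k=n}^n\varepsilon_k=\varepsilon_n$) and \eqref{action-adjoint}, one sees that
\[
[\mathscr M^*_{z_n},\mathscr M_{z_n}]\,e_\alpha \;=\; \bigl(w_\alpha^2 - w_{\alpha-\varepsilon_n}^2\bigr)\,e_\alpha, \qquad w_\alpha^2 \;=\; \frac{A_{P,m}(\alpha)}{A_{P,m}(\alpha+\varepsilon_n)},
\]
with the convention $w_{\alpha-\varepsilon_n}=0$ when $\alpha_n=0$.

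Next, I would use the hypotheses \eqref{condition-n-ess} together with Lemma~\ref{Spott}(ii) to drastically prune the sum \eqref{rho-alpha-f} that defines $A_{P,m}$. Since $P_i$ is independent of $z_n$, $A_{P_i,m_i}(\gamma)=0$ whenever $\gamma_n\neq 0$; since $P_j$ is independent of $z_i$ for every $j\neq i$, $A_{P_j,m_j}(\gamma)=0$ whenever $\gamma_i\neq 0$. Applying this with $\alpha=k\varepsilon_i$ for $k\in\mathbb Z_+$ forces, in every nonzero summand of \eqref{rho-alpha-f}, $\gamma^{(i)}=k\varepsilon_i$ and $\gamma^{(j)}=0$ for $j\neq i$, yielding
\[
A_{P,m}(k\varepsilon_i) \;=\; A_{P_i,m_i}(k\varepsilon_i).
\]
The same pruning applied to $\alpha=k\varepsilon_i+\varepsilon_n$ forces $\gamma^{(i)}=k\varepsilon_i$ and exactly one $\gamma^{(j_0)}=\varepsilon_n$ with $j_0\neq i$. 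Because the linear part of each $P_j$ is $a_j z_j$, the identity $A_{P_j,m_j}(\varepsilon_n)=m_j\partial_n P_j(0)$ (read off from \eqref{admissible-norm} applied to $m_j=1$, or just from \eqref{exp-1-by-1-Q}) vanishes unless $j=n$, so $j_0=n$ is forced and
\[
A_{P,m}(k\varepsilon_i+\varepsilon_n) \;=\; A_{P_i,m_i}(k\varepsilon_i)\cdot A_{P_n,m_n}(\varepsilon_n) \;=\; m_n\,a_n\,A_{P_i,m_i}(k\varepsilon_i).
\]

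Combining the two computations gives $w_{k\varepsilon_i}^2 = 1/(m_n a_n)$ for every $k\in\mathbb Z_+$. Because $i\le n-1$, the $n$-th coordinate of $k\varepsilon_i$ vanishes, so the subtracted term $w_{k\varepsilon_i-\varepsilon_n}^2$ is zero. Therefore $[\mathscr M^*_{z_n},\mathscr M_{z_n}]\,e_{k\varepsilon_i}=\tfrac{1}{m_na_n}\,e_{k\varepsilon_i}$ for every $k\in\mathbb Z_+$, exhibiting an infinite-dimensional eigenspace for the nonzero eigenvalue $1/(m_na_n)$; hence the commutator is not compact and $\mathscr M_{z_n}$ is not essentially normal. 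The main obstacle is the bookkeeping in the second paragraph: correctly identifying which multi-indices $(\gamma^{(1)},\ldots,\gamma^{(n)})$ survive in \eqref{rho-alpha-f} under the two vanishing hypotheses \eqref{condition-n-ess}, and being careful that the surviving $j_0$ in the second computation is exactly $n$ thanks to the positive-regular form of $P_n$.
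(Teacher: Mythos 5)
Your proposal is correct and follows essentially the same route as the paper: both diagonalize $[\mathscr M^*_{z_n},\mathscr M_{z_n}]$ against $\{e_\alpha\}$, prune the sum \eqref{rho-alpha-f} at $\alpha=k\varepsilon_i$ and $\alpha=k\varepsilon_i+\varepsilon_n$ via Lemma~\ref{Spott}(ii), and deduce non-compactness from a positive constant diagonal entry along $\{e_{k\varepsilon_i}\}_{k\in\mathbb Z_+}$. The only (harmless) refinement is that you pin the constant down to $1/(m_n a_n)$ by noting that only $j_0=n$ contributes to $A_{P_{j_0},m_{j_0}}(\varepsilon_n)$, where the paper simply leaves the denominator as $\sum_{j\neq i}A_{P_j,m_j}(\varepsilon_n)$ and observes it is positive.
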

\begin{proof} Assume that \eqref{condition-n-ess} holds for some $i=1, \ldots, n-1.$ 
Note that for $\alpha = \ell \varepsilon_i,$ by \eqref{wt-shift-action}, \eqref{shift-multi} and \eqref{action-adjoint}, 
\beqn
\inp{[\mathscr M^*_{z_n}, \mathscr M_{z_n}]e_{\alpha}}{e_\alpha} =\|\mathscr M_{z_n}e_\alpha \|^2 = \frac{{A_{P, m}(\ell \varepsilon_i)}}{{A_{P, m}(\ell \varepsilon_i + \varepsilon_n)}}, \quad \ell \in \mathbb Z_+.
\eeqn
It is now sufficient to show that $\displaystyle \inf_{\ell \in \mathbb Z_+}\frac{{A_{P, m}(\ell \varepsilon_i)}}{{A_{P, m}(\ell \varepsilon_i + \varepsilon_n)}} > 0.$ 
By \eqref{rho-alpha-f}, for any $\ell \in \mathbb Z_+,$ 
\beq \label{trace-Apm}
{A_{P, m}(\ell \varepsilon_i + \varepsilon_n)} = \sum_{{\substack{\gamma^{(1)}, \ldots, \gamma^{(n)} \in \mathbb Z^n_+ \\ \sum_{k=1}^n \gamma^{(k)} = \ell \varepsilon_i + \varepsilon_n}}} \prod_{j=1}^n A_{P_j, m_j}(\gamma^{(j)}).
\eeq
For $1 \le j\neq i \Le n,$
by \eqref{condition-n-ess} and Lemma~\ref{Spott}(ii), 
\beq \label{provided} 
\mbox{$A_{P_j, m_j}(\gamma^{(j)}) =0$ for $\gamma^{(j)} \in \mathbb Z^n_+$ with 
$\gamma^{(j)}_i \neq 0.$}
\eeq 
Similarly, since 
$\partial_n P_i =0$ (see \eqref{condition-n-ess}), once again by Lemma~\ref{Spott}(ii), 
\beq \label{provided-new} 
\mbox{$A_{P_i, m_i}(\gamma^{(i)}) =0$ for $\gamma^{(i)} \in \mathbb Z^n_+$ with  $\gamma^{(i)}_n \neq 0.$}
\eeq
It is now easy to see using \eqref{rho-alpha-f-new}, \eqref{rho-alpha-f} and \eqref{provided} that $A_{P, m}(\ell \varepsilon_i) = A_{P_i, m_i}(\ell \varepsilon_i).$ 
This combined with \eqref{trace-Apm}-\eqref{provided-new} shows that 
for any $\ell \in \mathbb Z_+,$
\beqn
\frac{{A_{P, m}(\ell \varepsilon_i)}}{{A_{P, m}(\ell \varepsilon_i + \varepsilon_n)}} &=& \frac{{A_{P_i, m_i}(\ell \varepsilon_i)}}{\sum_{1 \Le j \neq i \Le n} {A_{P_i, m_i}(\ell \varepsilon_i)}{A_{P_j, m_j}(\varepsilon_n)}} \\
&=& \frac{1}{\sum_{1 \Le j \neq i \Le n} {A_{P_j, m_j}(\varepsilon_n)}}, 
\eeqn
which is a positive real number (see Lemma~\ref{Spott}(iii)) independent of $\ell.$ 
\end{proof}

The following is immediate from Remark~\ref{rmk-d-admissible} and Proposition~\ref{not-en}. 
\begin{corollary} \label{coro-not-en}
If $P$ is an admissible polynomial $n$-tuple, then the multiplication $n$-tuple $\mathscr M_{z}$ on $\mathscr H^2_m(\triangle^{\!n}_{_P})$ is not essentially normal.
\end{corollary}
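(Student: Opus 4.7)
The plan is to verify that admissibility of $P$ forces the hypothesis of Proposition~\ref{not-en} to hold for every index $i=1,\ldots,n-1$, and then read off the conclusion. By the characterization~\eqref{P-admissible-rmk} recorded in Remark~\ref{rmk-d-admissible}, admissibility of $P=(P_1,\ldots,P_n)$ means that each $P_j$ is a one-variable polynomial in $z_j$, namely
\[
P_j(z) \;=\; \sum_{k=1}^{\deg P_j} \frac{(\partial_j^k P_j)(0)}{k!}\, z_j^k.
\]

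Since $n\Ge 2$ by the standing convention, fix any $i\in\{1,\ldots,n-1\}$. From the displayed form of $P_i$, which depends only on $z_i$, we read off $\partial_n P_i = 0$ (as $i\neq n$). Similarly, for any $j\in\{1,\ldots,n\}$ with $j\neq i$, the polynomial $P_j$ depends only on $z_j$, hence $\partial_i P_j=0$. Thus condition~\eqref{condition-n-ess} of Proposition~\ref{not-en} is satisfied, and the proposition yields that $\mathscr M_{z_n}$ on $\mathscr H^2_m(\triangle^{\!n}_{_P})$ is not essentially normal, i.e.\ the self-commutator $[\mathscr M^*_{z_n},\mathscr M_{z_n}]$ fails to be compact.

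Finally, essential normality of the tuple $\mathscr M_z$ would require each commutator $[\mathscr M^*_{z_j},\mathscr M_{z_k}]$, $1\Le j,k\Le n$, to be compact; in particular it would force compactness of $[\mathscr M^*_{z_n},\mathscr M_{z_n}]$. Since the latter has just been ruled out, $\mathscr M_z$ itself is not essentially normal, completing the proof. There is no genuine obstacle here: the entire content lies in noting that admissibility is a strong separation-of-variables condition that trivially supplies the vanishing partial-derivative hypotheses of Proposition~\ref{not-en}.
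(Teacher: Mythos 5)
Your proposal is correct and follows exactly the paper's route: the paper's proof simply states that the corollary is immediate from Remark~\ref{rmk-d-admissible} (which gives that admissibility forces each $P_j$ to depend only on $z_j$, hence the vanishing conditions in \eqref{condition-n-ess} hold for any $i=1,\ldots,n-1$) together with Proposition~\ref{not-en}. Your write-up just makes this verification explicit, so there is nothing to add.
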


\subsection{Circularity}

Following \cites{JL1979, AHHK1984}, we say that a commuting $n$-tuple $T=(T_1, \ldots, T_n)$ on $H$ is {\it circular} if for every $\theta := (\theta_1, \ldots, \theta_n) \in \mathbb R^n$, there exists a unitary operator $\varGamma_{\theta}$ on $H$ such that $$\varGamma^*_{\theta} T_j \varGamma_{\theta} = e^{\iota \theta_j} T_j, \quad  j=1, \ldots, n,$$ where $\iota$ is the unit imaginary number. 
We say that $T$ is {\it strongly circular} if, in addition, $\varGamma_{\theta}$ can be chosen to be a strongly continuous unitary representation of $\mathbb T^n$ in the following sense: For every $h \in \mathcal H$, the function
$\theta \mapsto \varGamma_{\theta}h$ is continuous on $\mathbb R^n.$
\begin{proposition} \label{p-circular}
The multiplication $n$-tuple $\mathscr M_z$ on $\mathscr H^2_m(\triangle^{\!n}_{_P})$ is strongly circular. 
\end{proposition}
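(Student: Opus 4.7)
The plan is to construct $\varGamma_\theta$ as the composition operator induced by the natural rotation action of $\mathbb T^n$ on the Reinhardt domain $\triangle^{\!n}_{_P}$. For $\theta=(\theta_1,\ldots,\theta_n)\in\mathbb R^n$, the map $R_\theta(z):=(e^{-\iota\theta_1}z_1,\ldots,e^{-\iota\theta_n}z_n)$ is a biholomorphism of $\triangle^{\!n}_{_P}$ onto itself, so I would define $(\varGamma_\theta f)(z):=f(R_\theta(z))$ for $f\in\mathscr H^2_m(\triangle^{\!n}_{_P})$ and $z\in\triangle^{\!n}_{_P}$. The first task is to verify that $\varGamma_\theta$ is a well-defined unitary on $\mathscr H^2_m(\triangle^{\!n}_{_P})$. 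I would do this by computing its action on the orthonormal basis $\{e_\alpha\}_{\alpha\in\mathbb Z^n_+}$ given by Proposition~\ref{on-basis}. Writing $\eta(\zeta):=(\zeta_1/\zeta_2,\ldots,\zeta_{n-1}/\zeta_n,\zeta_n)$ and using the equivariance $\varphi(\zeta\diamond z)=\eta(\zeta)\diamond\varphi(z)$, a direct substitution into \eqref{e-alpha} yields
\[
\varGamma_\theta e_\alpha = \chi_\alpha(\theta)\,e_\alpha, \qquad \chi_\alpha(\theta):=\eta(e^{-\iota\theta})^\alpha\prod_{j=2}^n e^{\iota\theta_j},
\]
with $|\chi_\alpha(\theta)|=1$. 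Hence $\varGamma_\theta$ extends from the dense span of $\{e_\alpha\}$ to a unitary operator on $\mathscr H^2_m(\triangle^{\!n}_{_P})$.

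The key step is the intertwining identity $\varGamma_\theta^*\mathscr M_{z_j}\varGamma_\theta=e^{\iota\theta_j}\mathscr M_{z_j}$. By \eqref{action-basis}, $\mathscr M_{z_j}e_\alpha$ is a nonnegative scalar multiple of $e_{\alpha+\sum_{k=j}^n\varepsilon_k}$, so it is enough to check the character ratio $\chi_{\alpha+\sum_{k=j}^n\varepsilon_k}(\theta)/\chi_\alpha(\theta)=e^{-\iota\theta_j}$. This ratio collapses to $\eta(e^{-\iota\theta})^{\sum_{k=j}^n\varepsilon_k}$, and the telescoping computation
\[
\eta(e^{-\iota\theta})^{\sum_{k=j}^n\varepsilon_k}=\Big(\prod_{k=j}^{n-1}e^{-\iota(\theta_k-\theta_{k+1})}\Big)e^{-\iota\theta_n}=e^{-\iota\theta_j}
\]
gives the desired equality on each basis vector, hence on all of $\mathscr H^2_m(\triangle^{\!n}_{_P})$ by density.

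The representation property $\varGamma_{\theta+\theta'}=\varGamma_\theta\varGamma_{\theta'}$ is inherited from $R_{\theta+\theta'}=R_\theta R_{\theta'}$ (equivalently, from $\chi_\alpha(\theta+\theta')=\chi_\alpha(\theta)\chi_\alpha(\theta')$), and strong continuity follows from a routine dominated-convergence argument: for $h=\sum_\alpha c_\alpha e_\alpha\in \mathscr H^2_m(\triangle^{\!n}_{_P})$, the expression $\|\varGamma_\theta h-\varGamma_{\theta_0}h\|^2=\sum_\alpha|c_\alpha|^2|\chi_\alpha(\theta)-\chi_\alpha(\theta_0)|^2$ is dominated termwise by $4|c_\alpha|^2$ and each summand tends to zero pointwise as $\theta\to\theta_0$ by continuity of $\chi_\alpha$. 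I do not anticipate a real conceptual obstacle; the entire argument is a bookkeeping exercise built around the telescoping identity above, which is the one calculational step where the specific monomial structure of $\eta$ is essential.
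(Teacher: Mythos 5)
Your proof is correct, but it takes a more self-contained route than the paper. The paper does not construct the implementing unitaries by hand: it first passes to the unilateral weighted multishift $\mathscr W$ defined by \eqref{wt-shift-action}, invokes the known fact (from \cite{CPT2017}, see also \cite{JL1979}) that such multishifts are strongly circular, and then transfers circularity to $\mathscr M_z$ through the factorization $\mathscr M_{z_j}=\prod_{k=j}^n\mathscr W_k$ of \eqref{shift-multi}, after reparametrizing $\theta$ as $\tilde{\theta}$ via the relation $(e^{\iota\theta_1},\ldots,e^{\iota\theta_n})=\varphi^{-1}(e^{\iota\tilde{\theta}_1},\ldots,e^{\iota\tilde{\theta}_n})$. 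Your telescoping identity $\eta(e^{-\iota\theta})^{\sum_{k=j}^n\varepsilon_k}=e^{-\iota\theta_j}$ is exactly this reparametrization in disguise, and your diagonal unitary $e_\alpha\mapsto\chi_\alpha(\theta)e_\alpha$ agrees, up to the harmless unimodular constant $\prod_{j=2}^ne^{\iota\theta_j}$, with the canonical circle-group representation that makes $\mathscr W$ circular; so the two arguments rest on the same mechanism. What your version buys is independence from the external references and the extra geometric content that $\varGamma_\theta$ is literally the composition operator induced by the rotation $R_\theta$ of the Reinhardt domain $\triangle^{\!n}_{_P}$; what the paper's version buys is brevity and reuse of the multishift machinery already set up in Section~\ref{S4}. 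Your verification of unitarity, the intertwining relation, the representation property and strong continuity are all sound; the only point worth making explicit is that the bounded extension of the diagonal operator from $\operatorname{span}\{e_\alpha\}$ really is $f\mapsto f\circ R_\theta$ on all of $\mathscr H^2_m(\triangle^{\!n}_{_P})$, which follows from continuity of point evaluations in a reproducing kernel Hilbert space.
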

\begin{proof} Let $\theta = (\theta_1, \ldots, \theta_n) \in \mathbb R^n.$ Since $\varphi$ maps $\mathbb T^n$ onto itself, there exists $\tilde{\theta} = (\tilde{\theta}_1, \ldots,  \tilde{\theta}_n) \in \mathbb R^n$ such that $$(e^{\iota \theta_1}, \ldots, e^{\iota\theta_n}) =\varphi^{-1}((e^{\iota \tilde{\theta}_1}, \ldots, e^{\iota\tilde{\theta}_n}))$$ 
(see \eqref{tilde}).
By \cite[Example~3.1.4 and Proposition~3.2.1]{CPT2017} (see also \cite[Corollary~3]{JL1979}), $\mathscr W$ is strongly circular. Thus, there exists a strongly continuous unitary representation $\varGamma_{\tilde{\theta}}$ of $\mathbb T^n$ on $\mathscr H^2_m(\triangle^{\!n}_{_P})$ such that $$\varGamma^*_{\tilde{\theta}} \mathscr W_j \varGamma_{\tilde{\theta}} = e^{\iota\tilde{\theta}_j}\mathscr W_j, \quad j=1, \ldots, n.$$
This combined with \eqref{shift-multi} (used twice) shows that 
\beqn
\mathscr M_{z_j} = \prod_{k=j}^n \mathscr W_k = \Big(\prod_{k=j}^n  e^{\iota\tilde{\theta}_k}\Big) \varGamma_{\tilde{\theta}} \mathscr M_{z_j} \varGamma^*_{\tilde{\theta}} \overset{\eqref{tilde}}= e^{\iota\theta_j} \varGamma_{\tilde{\theta}} \mathscr M_{z_j} \varGamma^*_{\tilde{\theta}}, \quad j=1, \ldots, n.  
\eeqn
This shows that $\mathscr M_z$ is strongly circular. 
\end{proof}

The following result contains some geometric information about the Taylor spectrum of $\mathscr M_z$ on $\mathscr H^2_m(\triangle^{\!n}_{_P}).$ 
\begin{corollary} \label{circular-spectral}
The Taylor spectrum of the multiplication $n$-tuple $\mathscr M_z$ on $\mathscr H^2_m(\triangle^{\!n}_{_P})$ is a $\mathbb T^n$-invariant connected subset of $\mathbb C^n$ that contains $\overline{\triangle^{\!n}_{_P}}.$
\end{corollary}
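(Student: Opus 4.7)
The corollary combines three ingredients already proved in the excerpt, so the plan is essentially to assemble them rather than to prove anything new from scratch.

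The connectedness of $\sigma(\mathscr{M}_z)$ and the inclusion $\overline{\triangle^{\!n}_{_P}} \subseteq \sigma(\mathscr{M}_z)$ are already contained in Proposition~\ref{spectral-coro-new}, so there is nothing to do there; I would simply cite that result. Therefore the only substantive content is the $\mathbb{T}^n$-invariance of $\sigma(\mathscr{M}_z)$, which I plan to extract directly from Proposition~\ref{p-circular}.

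For the invariance, I would argue as follows. Fix $\theta=(\theta_1,\ldots,\theta_n)\in\mathbb{R}^n$ and let $\varGamma_{\tilde{\theta}}$ be the unitary furnished by Proposition~\ref{p-circular} with the property that
\[
\varGamma_{\tilde\theta}^{*}\,\mathscr{M}_{z_j}\,\varGamma_{\tilde\theta}=e^{\iota\theta_j}\mathscr{M}_{z_j},\qquad j=1,\ldots,n.
\]
Hence $\mathscr{M}_z=(\mathscr{M}_{z_1},\ldots,\mathscr{M}_{z_n})$ is unitarily equivalent to $(e^{\iota\theta_1}\mathscr{M}_{z_1},\ldots,e^{\iota\theta_n}\mathscr{M}_{z_n})$. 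Since the Taylor spectrum is invariant under unitary equivalence and satisfies the spectral mapping theorem under polynomial maps (see \cite[Corollary~3.5]{C1988} — already invoked in the proof of Proposition~\ref{spectral-coro-new}), one obtains
\[
\sigma(\mathscr{M}_z)=\sigma\bigl(e^{\iota\theta_1}\mathscr{M}_{z_1},\ldots,e^{\iota\theta_n}\mathscr{M}_{z_n}\bigr)=\bigl\{(e^{\iota\theta_1}w_1,\ldots,e^{\iota\theta_n}w_n):w\in\sigma(\mathscr{M}_z)\bigr\}.
\]
As $\theta\in\mathbb{R}^n$ is arbitrary, this says exactly that $\sigma(\mathscr{M}_z)$ is $\mathbb{T}^n$-invariant.

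I do not expect any genuine obstacle here: the only tiny point to be careful about is that one uses the polynomial spectral mapping theorem applied to the coordinatewise multiplication by the scalars $e^{\iota\theta_j}$, together with unitary invariance of the Taylor spectrum. Combining this with Proposition~\ref{spectral-coro-new} finishes the proof in two lines.
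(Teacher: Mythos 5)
Your proposal is correct and follows essentially the same route as the paper: the paper's proof likewise derives $\mathbb T^n$-invariance from circularity (Proposition~\ref{p-circular}) via unitary invariance of the Taylor spectrum and the spectral mapping property of \cite[Corollary~3.5]{C1988}, and then invokes Proposition~\ref{spectral-coro-new} for connectedness and the inclusion of $\overline{\triangle^{\!n}_{_P}}$. You merely spell out the spectral-mapping step that the paper leaves implicit.
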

\begin{proof}
By Proposition~\ref{p-circular}, $\mathscr M_z$ is circular, and hence by the spectral mapping property (see \cite[Corollary~3.5]{C1988}),  $\sigma(\mathscr M_z)$  is $\mathbb T^n$-invariant. 
One may now apply Proposition~\ref{spectral-coro-new}.
\end{proof}

\subsection{Trace estimates}

In this subsection, we obtain trace estimates of the determinant operator of the multiplication $2$-tuple $\mathscr M_z$ for a family of reproducing kernel Hilbert spaces on generalized Hartogs triangles (cf. \cite[Definition 4.4]{MPS2022}). The reader is referred to \cite{Si2015} for the basic properties of trace class operators and related notions.

\begin{proposition} \label{B-S-Hartogs}
For an admissible polynomial $2$-tuple $P=(P_1, P_2),$ 
consider the sequences
\begin{equation*}
a_{_{P_1, m_1}}(k) = \frac{A_{P_1, m_1}(k \varepsilon_1)}{A_{P_1, m_1}((k +1)\varepsilon_1)}, ~ 
a_{_{P_2, m_2}}(k) =  \frac{A_{P_2, m_2}(k \varepsilon_2)}{A_{P_2, m_2}((k +1)\varepsilon_2)}, ~ k \in \mathbb Z_+.
\end{equation*}
If $\mathscr M_z$ is the multiplication $2$-tuple on $\mathscr H^2_m(\triangle^{\!2}_{_P}),$ then the following are valid$:$
\begin{itemize}
\item[$\mathrm{(i)}$] $\det\, [\mathscr M^*_z, \mathscr M_z]$ is positive if and only if the sequences $a_{_{P_1, m_1}}$ and $a_{_{P_2, m_2}}$ are increasing,
\item[$\mathrm{(ii)}$] if $\det\, [\mathscr M^*_z, \mathscr M_z]$ is positive, then $\det\, [\mathscr M^*_z, \mathscr M_z]$ is of trace-class if and only if the sequences $a_{_{P_1, m_1}}$ and $a_{_{P_2, m_2}}$ are bounded,
\item[$\mathrm{(iii)}$] if the sequences $a_{_{P_1, m_1}}$ and $a_{_{P_2, m_2}}$ are bounded and increasing, then 
\beqn
\mathrm{trace}(\det\, [\mathscr M^*_z, \mathscr M_z]) = \lim_{k \rar \infty} a_{_{P_1, m_1}}(k) \Big(\lim_{l \rar \infty} a_{_{P_2, m_2}}(l)\Big)^2.
\eeqn
\end{itemize} 
\end{proposition}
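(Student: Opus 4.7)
\textbf{Proof proposal for Proposition~\ref{B-S-Hartogs}.}

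The plan is to exploit the admissibility of $P$ to identify $\mathscr{M}_z$ on $\mathscr{H}^2_m(\triangle^{\!2}_{_P})$ with a tensor product of one-variable weighted shifts, which turns $\det[\mathscr{M}_z^*, \mathscr{M}_z]$ into an explicit diagonal operator whose eigenvalues factor cleanly. The three assertions then reduce to standard facts about non-negative sequences via a telescoping argument.

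First I would invoke admissibility and \eqref{A-tilde-A-new} in Lemma~\ref{APM-positive} to write $A_{P,m}(\alpha) = A_{P_1, m_1}(\alpha_1 \varepsilon_1)\, A_{P_2, m_2}(\alpha_2 \varepsilon_2)$. Set $d_j(k) := a_{P_j, m_j}(k)$ and adopt the convention $d_j(-1) := 0$. This factorisation underpins a unitary identification $\mathscr{H}^2_m(\triangle^{\!2}_{_P}) \cong \mathscr{H}_1 \otimes \mathscr{H}_2$, where each $\mathscr{H}_j$ has orthonormal basis $\{f_k^{(j)}\}_{k \in \mathbb{Z}_+}$ and $e_\alpha \leftrightarrow f_{\alpha_1}^{(1)} \otimes f_{\alpha_2}^{(2)}$. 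Writing $S_j$ for the weighted shift on $\mathscr{H}_j$ with weights $\sqrt{d_j(k)}$, the formula \eqref{action-basis} (equivalently \eqref{shift-multi}) immediately translates to
\[
\mathscr{M}_{z_1} \;=\; S_1 \otimes S_2, \qquad \mathscr{M}_{z_2} \;=\; I \otimes S_2.
\]

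Next I would compute the four products in \eqref{det-def}. Because $S_j^*S_j$, $S_jS_j^*$, and $C_j := [S_j^*, S_j]$ are all diagonal on $\{f_k^{(j)}\}$, each of the four summands of $\det[\mathscr{M}_z^*, \mathscr{M}_z]$ is diagonal on $\{f_i^{(1)} \otimes f_j^{(2)}\}$. A routine tensor computation gives that $2\,[\mathscr{M}_{z_1}^*, \mathscr{M}_{z_1}][\mathscr{M}_{z_2}^*, \mathscr{M}_{z_2}]$ has eigenvalue $2(d_1(i) d_2(j) - d_1(i-1) d_2(j-1))(d_2(j) - d_2(j-1))$ on $f_i^{(1)} \otimes f_j^{(2)}$, while the cross-term sum $[\mathscr{M}_{z_1}^*, \mathscr{M}_{z_2}][\mathscr{M}_{z_2}^*, \mathscr{M}_{z_1}] + [\mathscr{M}_{z_2}^*, \mathscr{M}_{z_1}][\mathscr{M}_{z_1}^*, \mathscr{M}_{z_2}]$ equals $(d_1(i) + d_1(i-1))(d_2(j) - d_2(j-1))^2$. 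Applying the elementary identity $2(ab - cd) = (a-c)(b+d) + (a+c)(b-d)$ with $a=d_1(i),\,b=d_2(j),\,c=d_1(i-1),\,d=d_2(j-1)$ to the first expression, the eigenvalue of $\det[\mathscr{M}_z^*, \mathscr{M}_z]$ on $f_i^{(1)} \otimes f_j^{(2)}$ collapses to
\[
\bigl(d_1(i) - d_1(i-1)\bigr)\,\bigl(d_2(j) - d_2(j-1)\bigr)\,\bigl(d_2(j) + d_2(j-1)\bigr).
\]

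From this factored form the three claims follow readily. For (i), the third factor is strictly positive (using $d_j(k) > 0$, guaranteed by Lemma~\ref{APM-positive} and Lemma~\ref{Spott}(iii)), so $\det[\mathscr{M}_z^*, \mathscr{M}_z] \geq 0$ iff both differences $d_1(i) - d_1(i-1)$ and $d_2(j) - d_2(j-1)$ are non-negative for every $i, j \in \mathbb{Z}_+$, i.e., iff both $a_{P_j, m_j}$ are increasing. For (ii) and (iii), under positivity the trace is the sum of the eigenvalues; rewriting the last two factors as $d_2(j)^2 - d_2(j-1)^2$ the sum factors as a product of two telescoping series
\[
\sum_{i \geq 0}\bigl(d_1(i) - d_1(i-1)\bigr) \cdot \sum_{j \geq 0}\bigl(d_2(j)^2 - d_2(j-1)^2\bigr) \;=\; \lim_{i} d_1(i) \cdot \Bigl(\lim_{j} d_2(j)\Bigr)^{\!2}.
\]
Since both sequences are monotone non-decreasing under (i), each limit is finite iff the corresponding sequence is bounded, which settles (ii), and the resulting product is precisely the trace formula in (iii). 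The main obstacle is the algebraic simplification in the second step: although tensor diagonality makes the operators easy to handle, it is not a priori clear that the three distinct $d_1(i-1)$-dependent contributions would combine so as to leave just $d_1(i) - d_1(i-1)$ as the first factor, and the cancellation driven by $2(ab-cd) = (a-c)(b+d) + (a+c)(b-d)$ is what makes both the positivity criterion and the telescoping formula work.
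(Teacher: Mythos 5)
Your proposal is correct and follows essentially the same route as the paper: admissibility gives the factorization $A_{P,m}(\alpha)=A_{P_1,m_1}(\alpha_1\varepsilon_1)A_{P_2,m_2}(\alpha_2\varepsilon_2)$, the determinant operator is shown to be diagonal with entries $(a_{P_1,m_1}(\alpha_1)-a_{P_1,m_1}(\alpha_1-1))(a^2_{P_2,m_2}(\alpha_2)-a^2_{P_2,m_2}(\alpha_2-1))$, and (i)--(iii) follow by telescoping; your tensor-product packaging $\mathscr M_{z_1}=S_1\otimes S_2$, $\mathscr M_{z_2}=I\otimes S_2$ is just a tidier bookkeeping for the paper's direct computation on the basis $\{e_\alpha\}$. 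The only point stated a bit too quickly is the forward implication in (i): from nonnegativity of the product of differences one must evaluate at $\alpha_1=0$ (resp.\ $\alpha_2=0$), where the difference equals $a_{P_j,m_j}(0)>0$, to rule out the ``both decreasing'' alternative --- but you have the needed ingredient in hand.
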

\begin{proof} 
Let $\alpha =(\alpha_1, \alpha_2) \in \mathbb Z^2_+.$ 
Set
\beqn
m^{(1)}_\alpha = \prod_{j=1}^2 \frac{\sqrt{A_{P_j, m_j}(\alpha_j \varepsilon_j)}}{\sqrt{A_{P_j, m_j}((\alpha_j +1)\varepsilon_j)}}, \, m^{(2)}_\alpha = \frac{\sqrt{A_{P_2, m_2}(\alpha_2 \varepsilon_2)}}{\sqrt{A_{P_2, m_2}((\alpha_2 +1)\varepsilon_2)}}. 
\eeqn
Since $P$ is admissible, by \eqref{A-tilde-A-new}, 
\beqn
A_{P, m}(\alpha)= A_{P_1, m_1}(\alpha_1 \varepsilon_1) A_{P_2, m_2}(\alpha_2 \varepsilon_2).
\eeqn
A routine calculation using \eqref{action-basis} and \eqref{action-adjoint} shows that 
\beqn
&& [\mathscr M^*_{z_j}, \mathscr M_{z_j}]e_{\alpha} =
\Big((m^{(j)}_{\alpha})^2-(m^{(j)}_{\alpha-\sum_{k=j}^2\varepsilon_k})^2\Big)e_{\alpha}, \quad j=1, 2,\\
  && [\mathscr M^*_{z_2}, \mathscr M_{z_1}]e_{\alpha}
= (m^{(1)}_{\alpha}m^{(2)}_{\alpha+\varepsilon_1}
- m^{(1)}_{\alpha-\varepsilon_2}m^{(2)}_{\alpha-\varepsilon_2})e_{\alpha + \varepsilon_1}, \\
&& [\mathscr M^*_{z_1}, \mathscr M_{z_2}]e_{\alpha}
= (m^{(1)}_{\alpha-\varepsilon_1}m^{(2)}_{\alpha}-m^{(2)}_{\alpha-\varepsilon_1-\varepsilon_2}m^{(1)}_{\alpha-\varepsilon_1-\varepsilon_2})e_{\alpha - \varepsilon_1}.
\eeqn
It is now easy to see that 
\beqn
&& [\mathscr M^*_{z_1}, \mathscr M_{z_1}][\mathscr M^*_{z_2}, \mathscr M_{z_2}]e_{\alpha} \\
&&=
	\Big((m^{(1)}_{\alpha})^2-(m^{(1)}_{\alpha-\varepsilon_1-\varepsilon_2})^2\Big)	\Big((m^{(2)}_{\alpha})^2-(m^{(2)}_{\alpha-\varepsilon_2})^2\Big)e_{\alpha}, \\
& & [\mathscr M^*_{z_1}, \mathscr M_{z_2}][\mathscr M^*_{z_2}, \mathscr M_{z_1}]e_{\alpha}=
\Big(m^{(1)}_{\alpha}m^{(2)}_{\alpha+\varepsilon_1}-m^{(1)}_{\alpha-\varepsilon_2}m^{(2)}_{\alpha-\varepsilon_2}\Big)^2e_{\alpha}, \\
&&	[\mathscr M^*_{z_2}, \mathscr M_{z_1}][\mathscr M^*_{z_1}, \mathscr M_{z_2}]e_{\alpha}=
\Big(m^{(1)}_{\alpha-\varepsilon_1}m^{(2)}_{\alpha}-m^{(2)}_{\alpha-\varepsilon_1-\varepsilon_2}m^{(1)}_{\alpha-\varepsilon_1-\varepsilon_2}\Big)^2e_{\alpha}.
	\eeqn
This combined with \eqref{det-def} shows that the determinant operator $\det\,[\mathscr M^*_z, \mathscr M_z]$ is a diagonal operator (with respect to the orthonormal basis $\{e_{\alpha}\}_{\alpha \in \mathbb Z^2_+}$) with diagonal entries given by 
\beq
\label{det-positive}
&& \Big \langle{\det\,[\mathscr M^*_z, \mathscr M_z] e_{\alpha}}, {e_{\alpha}}\Big\rangle \\
&=& (a_{_{P_1, m_1}}(\alpha_1) - a_{_{P_1, m_1}}(\alpha_1 - 1)) (a^2_{_{P_2, m_2}}(\alpha_2) - a^2_{_{P_2, m_2}}(\alpha_2 - 1)), ~ \alpha \in \mathbb Z^2_+. \notag
\eeq

(i) If $\det\,[\mathscr M^*_z, \mathscr M_z]$ is positive, then by \eqref{det-positive} (with $\alpha_1 =0$) and the fact that  $a_{_{P_1, m_1}}(0) - a_{_{P_1, m_1}}(-1) > 0,$ $a_{_{P_2, m_2}}$ is necessarily increasing. 
Similarly, by letting $\alpha_2=0$ in \eqref{det-positive}, we may conclude  that $a_{_{P_1, m_1}}$ is also increasing. 
Conversely, if $a_{_{P_1, m_1}}$ and $a_{_{P_2, m_2}}$ are increasing, then by
\eqref{det-positive},  $\det\,[\mathscr M^*_z, \mathscr M_z]$ is positive. 

(ii)$\&$(iii) Assume that $\det\, [\mathscr M^*_z, \mathscr M_z]$ is a positive operator. Note that $\det\,[\mathscr M^*_z, \mathscr M_z]$ is of trace-class if and only if 
\begin{align*}
 &\mathrm{trace}(\det\,[\mathscr M^*_z, \mathscr M_z])  \\
&= \! \! \sum_{\alpha \in \mathbb Z^2_+}\! \Big(a_{_{P_1, m_1}}(\alpha_1) - a_{_{P_1, m_1}}(\alpha_1 - 1)\Big) \Big(a^2_{_{P_2, m_2}}(\alpha_2) - a^2_{_{P_2, m_2}}(\alpha_2 - 1)\Big) < \infty.
\end{align*}
By (i), the series above consists of nonnegative terms, and hence 
\beq \label{trace-cgt}
  \mathrm{trace}(\det\,[\mathscr M^*_z, \mathscr M_z]) 
= \prod_{j=1}^2 \sum_{\alpha_j \in \mathbb Z_+}\Big(a^j_{_{P_j, m_j}}(\alpha_j) - a^j_{_{P_j, m_j}}(\alpha_j - 1)\Big).
\eeq 
Thus
$\det\,[\mathscr M^*_z, \mathscr M_z]$ is of trace-class if and only if the series
\begin{equation*}
 \sum_{\alpha_1 \in \mathbb Z_+}\Big(a_{_{P_1, m_1}}(\alpha_1) - a_{_{P_1, m_1}}(\alpha_1 - 1)\Big), ~ \sum_{\alpha_2 \in \mathbb Z_+} \Big(a^2_{_{P_2, m_2}}(\alpha_2) - a^2_{_{P_2, m_2}}(\alpha_2 - 1)\Big) 
\end{equation*}
are convergent. In view of (i), 
the latter one happens if and only if $a_{_{P_1, m_1}}$ and $a_{_{P_2, m_2}}$ are bounded, which gives (ii). Finally, (ii) combined with \eqref{trace-cgt} yields (iii).
\end{proof}

\subsection{$\triangle^{\!n}_{_P}$-contractions} \label{S5.3}

In this subsection, we discuss the notion of contractivity closely associated with the shape of a generalized Hartogs triangle. 
\begin{definition} 
\label{def-D-contraction}
{\it 
Let $m \in \mathbb N^n$ and let $P$ be a positive regular polynomial $n$-tuple such that $\frac{1}{\mathscr K_{_{P, m}}}$ is a hereditary polynomial, that is, 
$\frac{1}{\mathscr K_{_{P, m}}}$ is a polynomial in $z$ and $\overline{w}.$ 
A commuting $n$-tuple $T$ on $H$ is said to be a {\it $\triangle^{\!n}_{_P}$-contraction of order $m$} if 
$\frac{1}{\mathscr K_{_{P, m}}}(T, T^*) \Ge 0.$
We say that $T$ is a  {\it $\triangle^{\!n}_{_P}$-isometry of order $m$} if 
$\frac{1}{\mathscr K_{_{P, m}}}(T, T^*) = 0.$
If $P=P_0$ $($see \eqref{def-P-a}$),$ then we refer to  $\triangle^{\!n}_{_P}$-contraction $($resp. $\triangle^{\!n}_{_P}$-isometry$)$ of order $m$ as {\it $\triangle^{\!n}_{_0}$-contraction} $($resp. {\it $\triangle^{\!n}_{_0}$-isometry}$)$ of order $m.$ In case $m$ is the $n$-tuple ${\bf 1}$ with all entries equal to $1,$ we drop the term ``of order $m$".} 
\end{definition}
\begin{remark} \label{rmk-H-triangle}
For a commuting $n$-tuple $T=(T_1, \ldots, T_n)$ on $H,$ let
\beqn
D^{(1)}_n(T) \!\!\! \! &=& \!\!\! \! T^*_nT_n-T^*_{n-1}T_{n-1}, \\ 
D^{(k)}_{n}(T) \!\!\! \! &=& \!\!\! \! T^*_{n-k+1}D^{(k-1)}_{n}(T) T_{n-k+1}-T^*_{n-k}D^{(k-1)}_{n}(T)T_{n-k}, ~2 \Le  k \Le n-1.
\eeqn
It is easy to see that
a commuting $n$-tuple $(T_1, \ldots, T_n)$ is a $\triangle^{\!n}_{_0}$-contraction (resp. $\triangle^{\!n}_{_0}$-isometry) if and only if 
\beqn 
\mbox{$T^*_nD^{(n-1)}_n(T)T_n \Le D^{(n-1)}_n(T)$ (resp.
$T^*_nD^{(n-1)}_n(T)T_n = D^{(n-1)}_n(T)$).}
\eeqn 
In particular, a commuting pair $(T_1, T_2)$ is a $\triangle^{\!2}_{_0}$-contraction if and only if 
\beq \label{2-H-con}
T^*_2(T^*_2T_2-T^*_{1}T_{1})T_2 \Le T^*_2T_2-T^*_{1}T_{1}.
\eeq
Also, a commuting triple $T=(T_1, T_2, T_3)$ is a $\triangle^{\!3}_{_0}$-contraction if and only if  
\allowdisplaybreaks
\beq
\label{3-H-con}
&& T^*_3\Big(T^*_2(T^*_3T_3-T^*_{2}T_{2})T_2 - T^*_1(T^*_3T_3-T^*_{2}T_{2})T_1\Big)T_3 \notag \\
& \Le & T^*_2(T^*_3T_3-T^*_{2}T_{2})T_2 - T^*_1(T^*_3T_3-T^*_{2}T_{2})T_1.
\eeq
Furthermore,  
$(T_1, T_2)$ is a $\triangle^{\!2}_{_0}$-isometry if and only if equality holds in \eqref{2-H-con}, and  $(T_1, T_2, T_3)$ is a $\triangle^{\!3}_{_0}$-isometry if and only if equality holds in \eqref{3-H-con}. It is easy to construct examples of $\triangle^{\!n}_{_0}$-contractions and $\triangle^{\!n}_{_0}$-isometries. For example, if $D^{(1)}_n(T)=0$ (this happens if $T^*_nT_n=T^*_{n-1}T_{n-1}$), then $D^{(n-1)}_n(T)=0,$ and hence $T$ is a $\triangle^{\!n}_{_0}$-isometry. 
\hfill 
$\diamondsuit$
\end{remark}

The $n$-tuple $\mathscr M^*_z$ on $\mathscr H^2_m(\triangle^{\!n}_{_P})$ is a $\triangle^{\!n}_{_P}$-contraction provided the reciprocal of the reproducing kernel $\mathscr K_{_{P, m}}$ is a hereditary polynomial.  
\begin{proposition} \label{positivity-prop}
Assume that  $\frac{1}{\mathscr K_{_{P, m}}}$ is a hereditary polynomial.
Then $\mathscr M^*_z$ on $\mathscr H^2_m(\triangle^{\!n}_{_P})$ is a $\triangle^{\!n}_{_P}$-contraction of order $m.$
\end{proposition}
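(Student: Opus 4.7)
The plan is to show positivity by evaluating the hereditary functional calculus $\tfrac{1}{\mathscr K_{_{P,m}}}(\mathscr M^*_z, \mathscr M_z)$ on reproducing kernel functions and recognizing it as a rank-one positive operator. The key structural observation is that the kernel $\mathscr K_{_{P,m}}(z, w)$ in \eqref{exp-2} depends on the pair $(z, \overline w)$ only through the scalar products $z_1 \overline w_1, \ldots, z_n \overline w_n$: the factor $\prod_{j=2}^n (z_j \overline w_j)^{-1}$ is manifestly of this form, and the coordinates $\tfrac{z_j \overline w_j}{z_{j+1}\overline w_{j+1}}$ (for $j<n$) and $z_n \overline w_n$ of $\varphi(z) \diamond \overline{\varphi(w)}$ are ratios or products of the same quantities. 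Consequently, whenever $\tfrac{1}{\mathscr K_{_{P,m}}}$ is a hereditary polynomial, it has the ``diagonal'' expansion $\tfrac{1}{\mathscr K_{_{P,m}}}(z, w) = \sum_{\alpha \in \mathbb Z^n_+} c_\alpha z^\alpha \overline w^\alpha$ for some real coefficients $c_\alpha$.

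Combining this with the eigenvalue identity $\mathscr M_z^{*\alpha} \mathscr K_{_{P,m}}(\cdot, w) = \overline w^\alpha \mathscr K_{_{P,m}}(\cdot, w)$ from \eqref{eigen-v}, and the convention that a hereditary monomial $z^\alpha \overline w^\beta$ evaluates at $(T, T^*)$ to $T^{*\beta} T^\alpha$ (so at $T = \mathscr M^*_z$ the diagonal term $z^\alpha \overline w^\alpha$ becomes $\mathscr M_z^\alpha \mathscr M_z^{*\alpha}$), we compute
\begin{equation*}
\tfrac{1}{\mathscr K_{_{P,m}}}(\mathscr M^*_z, \mathscr M_z)\, \mathscr K_{_{P,m}}(\cdot, w) = \Big[\,z \mapsto \sum_\alpha c_\alpha z^\alpha \overline w^\alpha \, \mathscr K_{_{P,m}}(z, w)\,\Big] = \mathbf 1,
\end{equation*}
where the last equality uses $\tfrac{1}{\mathscr K_{_{P,m}}}(z, w) \cdot \mathscr K_{_{P,m}}(z, w) = 1$. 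Hence the operator sends every reproducing kernel $\mathscr K_{_{P,m}}(\cdot, w)$ to the constant function $\mathbf 1$, which a posteriori belongs to $\mathscr H^2_m(\triangle^{\!n}_{_P})$.

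Positivity now follows easily: for any $f = \sum_{i=1}^N a_i \mathscr K_{_{P,m}}(\cdot, w_i)$, the reproducing property gives $\langle \mathbf 1, \mathscr K_{_{P,m}}(\cdot, w_j) \rangle = \mathbf 1(w_j) = 1$, whence
\begin{equation*}
\Big\langle \tfrac{1}{\mathscr K_{_{P,m}}}(\mathscr M^*_z, \mathscr M_z) f, f \Big\rangle = \sum_{i,j=1}^N a_i \overline{a_j} = \Big|\sum_{i=1}^N a_i\Big|^2 \Ge 0.
\end{equation*}
Since $\tfrac{1}{\mathscr K_{_{P,m}}}(\mathscr M^*_z, \mathscr M_z)$ is a polynomial in the bounded operators $\mathscr M_{z_j}$ and $\mathscr M^*_{z_j}$ (by Proposition~\ref{bddness}), it is bounded on $\mathscr H^2_m(\triangle^{\!n}_{_P})$, and the positivity extends by density of the span of reproducing kernels to the full space. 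The main subtlety lies in tracking the ordering in the hereditary functional calculus and verifying the diagonal structure of $\tfrac{1}{\mathscr K_{_{P,m}}}$; once these are in place, the positivity is an immediate consequence of the reproducing-kernel identity.
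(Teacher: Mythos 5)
Your proof is correct and follows essentially the same route as the paper: both exploit the diagonal form $\frac{1}{\mathscr K_{_{P,m}}}(z,w)=\sum_\alpha a_\alpha z^\alpha\overline w^\alpha$, the eigenvector identity \eqref{eigen-v} to evaluate $\sum_\alpha a_\alpha \mathscr M_z^\alpha\mathscr M_z^{*\alpha}$ on kernel functions (the paper computes the matrix entries $\langle\cdot\,\mathscr K_{_{P,m}}(\cdot,x),\mathscr K_{_{P,m}}(\cdot,y)\rangle=1$, which is equivalent to your observation that each $\mathscr K_{_{P,m}}(\cdot,w)$ is sent to $\mathbf 1$), and then the density of the span of kernel functions. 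Your explicit justification of the diagonal structure via dependence on the products $z_j\overline w_j$ is a small addition the paper leaves implicit, but the argument is the same.
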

\begin{proof} 
Since $\frac{1}{\mathscr K_{_{P, m}}}$ is a hereditary polynomial,  for a finite subset $F$ of $\mathbb Z^n_+$ and some complex numbers $a_{\alpha},$
\beqn
\frac{1}{\mathscr K_{_{P, m}}(z, w)} \overset{\eqref{exp-2}}= \sum_{\alpha \in F} a_{\alpha} \overline{w}^\alpha z^{\alpha}.
\eeqn
Applying the hereditary functional calculus to $\mathscr M^*_z,$ we obtain
\beqn
\frac{1}{\mathscr K_{_{P, m}}}(\mathscr M^*_z, \mathscr M_z)= \sum_{\alpha \in F} a_{\alpha} \mathscr M^{\alpha}_z \mathscr M^{*\alpha}_z.
\eeqn 
It now follows from \eqref{rp}  and \eqref{eigen-v}  that for any $x, y \in \triangle^{\!n}_{_P},$
\allowdisplaybreaks
\beq
\label{here-calculus}
&& \Big \langle \frac{1}{\mathscr K_{_{P, m}}}(\mathscr M^*_z, \mathscr M_z)\mathscr K_{_{P, m}}(\cdot, x), \, \mathscr K_{_{P, m}}(\cdot, y) \Big \rangle \notag \\ &=& \sum_{\alpha \in F} a_{\alpha} \inp{\mathscr M^{*\alpha}_z \mathscr K_{_{P, m}}(\cdot, x)}{\mathscr M^{*\alpha}_z \mathscr K_{_{P, m}}(\cdot, y)} \notag \\
&=& 1.
\eeq
For a positive integer $N,$ $c_1, \ldots, c_N \in \mathbb C$ and $w^{(1)}, \ldots, w^{(N)} \in \triangle^{\!n}_{_P},$ let $f=\sum_{j=1}^N c_j \mathscr K_{_{P, m}}(\cdot, w^{(j)}).$ By \eqref{here-calculus},
\allowdisplaybreaks
\beqn
&& \Big\langle \frac{1}{\mathscr K_{_{P, m}}}(\mathscr M^*_z, \mathscr M_z)f, f \Big \rangle \\ &=& \sum_{j, k =1}^N c_j \overline{c}_k \Big\langle \frac{1}{\mathscr K_{_{P, m}}}(\mathscr M^*_z, \mathscr M_z)\mathscr K_{_{P, m}}(\cdot, w^{(j)}), \mathscr K_{_{P, m}}(\cdot, w^{(k)}) \Big \rangle \\
&=& \Big|\sum_{j=1}^N c_j \Big|^2.
\eeqn
Since any arbitrary member of  $\mathscr H^2_m(\triangle^{\!n}_{_P})$ can be approximated in $\mathscr H^2_m(\triangle^{\!n}_{_P})$ by elements of the form $f,$ we get the desired positivity condition. 
\end{proof}

In view of \eqref{exp-hered} (see Example~\ref{exm-triangle-2}), the following is immediate from Proposition~\ref{positivity-prop}. 
\begin{corollary} \label{Coro-H-contraction}
If $a$ is a nonnegative real number, then 
$\mathscr M^*_z$ on $\mathscr H^2(\triangle^{\!n}_{_a})$ is a $\triangle^{\!n}_{_a}$-contraction.
\end{corollary}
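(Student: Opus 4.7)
The strategy is to recognize that Corollary~\ref{Coro-H-contraction} is just Proposition~\ref{positivity-prop} applied to $P = P_a$ and $m = \mathbf{1}$. So the entire content of the proof is to verify the hypothesis of Proposition~\ref{positivity-prop}, namely that $\frac{1}{\mathscr{K}_{_{P_a, \mathbf{1}}}}$ is a hereditary polynomial in $z$ and $\overline{w}$.

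Concretely, I would start from the explicit formula \eqref{exp-hered} for $\mathscr{K}_{a, m}$ with $m = \mathbf{1}$, which gives
\begin{equation*}
\mathscr{K}_{a,\mathbf{1}}(z,w) = \prod_{j=2}^n \frac{1}{z_j \overline{w}_j}\ \prod_{j=1}^{n-1} \frac{1}{1 - \frac{z_j \overline{w}_j}{z_{j+1}\overline{w}_{j+1}} - a\, z_1 \overline{w}_1}\ \cdot \frac{1}{1 - z_n \overline{w}_n - a\, z_1 \overline{w}_1}.
\end{equation*}
Taking reciprocals and distributing the factor $\prod_{j=2}^n z_j \overline{w}_j$ across the product over $j=1,\dots,n-1$, I pair each $z_{j+1}\overline{w}_{j+1}$ (for $j=1,\dots,n-1$) with the corresponding factor $1 - \frac{z_j\overline{w}_j}{z_{j+1}\overline{w}_{j+1}} - a\, z_1 \overline{w}_1$ to obtain the clearly polynomial expression $z_{j+1}\overline{w}_{j+1} - z_j \overline{w}_j - a\, z_1 \overline{w}_1\, z_{j+1}\overline{w}_{j+1}$. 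This accounts for exactly the $n-1$ factors $z_2\overline{w}_2, \dots, z_n \overline{w}_n$ in $\prod_{j=2}^n z_j \overline{w}_j$, so there is a perfect cancellation and
\begin{equation*}
\frac{1}{\mathscr{K}_{a,\mathbf{1}}(z,w)} = \prod_{j=1}^{n-1}\bigl(z_{j+1}\overline{w}_{j+1} - z_j \overline{w}_j - a\, z_1 \overline{w}_1\, z_{j+1}\overline{w}_{j+1}\bigr)\bigl(1 - z_n \overline{w}_n - a\, z_1 \overline{w}_1\bigr),
\end{equation*}
which is manifestly a polynomial in $z$ and $\overline{w}$, i.e., hereditary.

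With this in hand, Proposition~\ref{positivity-prop} applied to $P = P_a$ and $m = \mathbf{1}$ yields immediately that $\mathscr{M}^*_z$ on $\mathscr{H}^2(\triangle^{\!n}_{_a})$ is a $\triangle^{\!n}_{_a}$-contraction, which is the conclusion of the corollary.

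There is no real obstacle here; the only step requiring any thought is the bookkeeping in the cancellation, which works because the number of prefactors $z_j\overline{w}_j$ (for $j=2,\dots,n$) exactly matches the number of denominators $z_{j+1}\overline{w}_{j+1}$ (for $j=1,\dots,n-1$). The final factor $1 - z_n \overline{w}_n - a z_1 \overline{w}_1$ is already polynomial and needs no clearing. The notational sanity check is that for $a = 0$ this reduces to $\prod_{j=1}^{n-1}(z_{j+1}\overline{w}_{j+1} - z_j \overline{w}_j)(1 - z_n\overline{w}_n)$, the expected reciprocal kernel for the $n$-dimensional Hartogs triangle.
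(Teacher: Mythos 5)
Your proposal is correct and follows exactly the paper's route: the paper's proof of this corollary is simply the observation that, by the explicit formula \eqref{exp-hered} with $m=\mathbf 1$, the reciprocal $\frac{1}{\mathscr K_{a,\mathbf 1}}$ is a hereditary polynomial, so Proposition~\ref{positivity-prop} applies. You merely spell out the cancellation that the paper leaves implicit, and your resulting polynomial expression is the right one.
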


\section{Commutant, multiplier algebra and point spectrum} \label{S6}

The {\it commutant} $\mathscr S'$ of a subset $\mathscr S$ of $\mathcal B(H)$ is given by
\beqn
\mathscr S' := \{T \in \mathcal B(H) : ST=TS~\mbox{for all~}S \in \mathscr S\}.
\eeqn 
For a commuting $n$-tuple $T=(T_1, \ldots, T_n)$ on $H$, we use the simpler notation $\{T\}'$ for $\mathscr S',$ where $\mathscr S=\{T_1, \ldots, T_n\}.$ 
Note that $\mathscr S'$ is a unital closed subalgebra of $\mathcal B(H).$

The following theorem describes the commutant of the multiplication $n$-tuple $\mathscr M_z$ on $\mathscr H^2_m(\triangle^{\!n}_{_P}).$
\begin{theorem} \label{mult-h-infty} If $\mathscr M_z$ is the multiplication $n$-tuple on $\mathscr H^2_m(\triangle^{\!n}_{_P}),$ then the following statements are valid$:$
\begin{itemize}
\item[$\mathrm{(i)}$] $\{\mathscr M_z\}'$ is a maximal abelian subalgebra of $\mathcal B(\mathscr H^2_m(\triangle^{\!n}_{_P})),$
\item[$\mathrm{(ii)}$] $\{\mathscr M_z\}'$ is equal to the multiplier algebra of $\mathscr H^{2}_m(\triangle^{\!n}_{_P}),$
\item[$\mathrm{(iii)}$]  $\{\mathscr M_z\}'$ is contractively contained in
$\mathscr H^{\infty}(\triangle^{\!n}_{_P}).$
\end{itemize}
\end{theorem}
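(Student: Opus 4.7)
The plan is to establish part (ii) first, since the identification of $\{\mathscr M_z\}'$ with the multiplier algebra will make parts (i) and (iii) essentially formal consequences. The conceptual engine is the reproducing kernel eigenvalue identity \eqref{eigen-v}, combined with the one-dimensionality of $\ker(\mathscr M^*_z - \overline{w})$ on $\triangle^{\!n}_{_P}$, which the paper announces separately as Lemma~\ref{spectral-lem}. I will assume that lemma as a black box (its verification is the real technical cost and is flagged explicitly in the paper's outline).

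For (ii), one inclusion is automatic: any multiplier $\phi$ yields $\mathscr M_\phi \in \mathcal B(\mathscr H^2_m(\triangle^{\!n}_{_P}))$ (closed graph theorem) that commutes with each $\mathscr M_{z_j}$, so $\mathscr M_\phi \in \{\mathscr M_z\}'$. For the reverse inclusion, take $T \in \{\mathscr M_z\}'$. Since $T^*$ commutes with each $\mathscr M^*_{z_j}-\overline{w}_j$, it preserves $\ker(\mathscr M^*_z - \overline{w})$ for every $w\in\mathbb C^n$. By \eqref{eigen-v}, $\mathscr K_{_{P,m}}(\cdot,w)\in \ker(\mathscr M^*_z-\overline{w})$ for $w\in\triangle^{\!n}_{_P}$, and by Lemma~\ref{spectral-lem} this joint kernel is one-dimensional. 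Consequently there is a scalar function $\phi:\triangle^{\!n}_{_P}\to\mathbb C$ with
\[
T^*\mathscr K_{_{P,m}}(\cdot,w) = \overline{\phi(w)}\,\mathscr K_{_{P,m}}(\cdot,w), \qquad w\in\triangle^{\!n}_{_P}.
\]
The reproducing property then gives, for every $f\in\mathscr H^2_m(\triangle^{\!n}_{_P})$ and $w\in\triangle^{\!n}_{_P}$,
\[
(Tf)(w)=\langle Tf,\mathscr K_{_{P,m}}(\cdot,w)\rangle = \langle f, T^*\mathscr K_{_{P,m}}(\cdot,w)\rangle = \phi(w)\,f(w),
\]
so $Tf=\phi f$. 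Holomorphy of $\phi$ follows by dividing: the basis vector $e_0(z)=1/\prod_{j=2}^n z_j$ from \eqref{e-alpha} is nowhere vanishing on $\triangle^{\!n}_{_P}$, and $\phi = (Te_0)/e_0$ is a ratio of holomorphic functions with non-vanishing denominator. Thus $T=\mathscr M_\phi$ with $\phi$ a holomorphic multiplier, proving (ii).

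Part (i) is then immediate: any two multiplication operators commute, so $\{\mathscr M_z\}'$ is abelian by (ii); and if $S$ commutes with every element of $\{\mathscr M_z\}'$, it in particular commutes with $\mathscr M_{z_1},\ldots,\mathscr M_{z_n}$ (which are multipliers by Proposition~\ref{bddness}), so $S\in\{\mathscr M_z\}'$, giving maximality. For (iii), the eigenvalue identity above yields, for every $w\in\triangle^{\!n}_{_P}$,
\[
|\phi(w)|\,\|\mathscr K_{_{P,m}}(\cdot,w)\| = \|\mathscr M^*_\phi \mathscr K_{_{P,m}}(\cdot,w)\| \;\le\; \|\mathscr M_\phi\|\,\|\mathscr K_{_{P,m}}(\cdot,w)\|,
\]
hence $\|\phi\|_{\infty,\triangle^{\!n}_{_P}}\le \|\mathscr M_\phi\|$. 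Thus the map $\mathscr M_\phi\mapsto \phi$ contractively embeds $\{\mathscr M_z\}'$ into $\mathscr H^{\infty}(\triangle^{\!n}_{_P})$.

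The genuine obstacle is not in this proof but in Lemma~\ref{spectral-lem}: proving $\dim\ker(\mathscr M^*_z-\overline{w})=1$ on $\triangle^{\!n}_{_P}$ despite the jump to $\infty$ at the origin (already foreshadowed by \eqref{0-pt-s}). Granting that lemma, the present theorem is a textbook commutant/multiplier argument adapted to the reproducing kernel framework; the only point that needs care is that $\triangle^{\!n}_{_P}\subset\mathbb C\times\mathbb C^{n-1}_*$ does not contain the origin, which is precisely why $e_0$ is nowhere zero and legitimately serves as the ``constant-like" cyclic vector needed to extract $\phi$.
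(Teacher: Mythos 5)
Your argument is correct, but it reaches part (i) by a genuinely different route than the paper. For (ii) and (iii) you do essentially what the paper does: granting Lemma~\ref{spectral-lem}, you observe that $T^*$ preserves the one-dimensional eigenspace $\ker(\mathscr M^*_z-\overline{w})$ spanned by $\mathscr K_{_{P,m}}(\cdot,w)$, extract the symbol $\phi$ from the reproducing property, and get the contractive embedding from $|\phi(w)|\,\|\mathscr K_{_{P,m}}(\cdot,w)\|\le\|\mathscr M_\phi\|\,\|\mathscr K_{_{P,m}}(\cdot,w)\|$ (the paper simply cites \cite[Corollary~5.22]{PR2016} for this last step, and obtains holomorphy of the symbol from anti-holomorphy of $w\mapsto A^*\mathscr K_{_{P,m}}(\cdot,w)$ rather than from your equally valid quotient $\phi=(Te_0)/e_0$, which correctly exploits that $e_0$ is nowhere zero because $\triangle^{\!n}_{_P}$ misses the coordinate hyperplanes). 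The divergence is in (i): the paper proves it \emph{independently} of (ii) by passing to the associated unilateral weighted multishift $\mathscr W$ via \eqref{shift-multi}, showing each $\mathscr W_j$ is injective, invoking \cite[Corollary~12]{JL1979} for maximal abelianness of $\{\mathscr W\}'$, and then proving $\{\mathscr M_z\}'=\{\mathscr W\}'$ by a finite induction using injectivity of $\mathscr W_n$. You instead deduce (i) from (ii): every element of $\{\mathscr M_z\}'$ is a multiplication operator, hence the commutant is abelian, and $(\{\mathscr M_z\}')'\subseteq\{\mathscr M_z\}'$ since each $\mathscr M_{z_j}$ lies in $\{\mathscr M_z\}'$, so $\{\mathscr M_z\}'=(\{\mathscr M_z\}')'$ is maximal abelian. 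Your derivation is shorter and avoids the multishift machinery and the external citation, at the cost of making (i) depend on Lemma~\ref{spectral-lem}, which the paper's proof of (i) does not need. Both routes are sound.
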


We need the following lemma in the proof of Theorem~\ref{mult-h-infty}. 

\begin{lemma} \label{spectral-lem}
Let $\mathscr M_z$ be the multiplication $n$-tuple on $\mathscr H^2_m(\triangle^{\!n}_{_P}).$ Then 
\beqn
\ker(\mathscr M^*_z- \overline{w}) = \begin{cases} \{\eta \mathscr K_{_{P, m}}(\cdot, w) : \eta \in \mathbb C\} & \mbox{if}~w \in \triangle^{\!n}_{_P}, \\
\bigvee \{e_{\alpha} : \alpha \in \mathbb Z^n_+, \, \alpha_n =0\} & \mbox{if}~w = 0.
\end{cases}
\eeqn
Moreover, if $w \in \mathbb C \times \mathbb C^{n-1}_*,$ then   
$f \in \ker(\mathscr M^*_z- \overline{w})$ takes the form
\beq \label{f-must-be-k} 
f(z) = \inp{f}{e_0} \prod_{k=2}^n\overline{w}_k \sum_{\alpha \in \mathbb Z^n_+} \overline{e_{\alpha}(w)}e_{\alpha}(z), \quad z \in \triangle^{\!n}_{_P},
\eeq
where $e_\alpha,$ as defined in \eqref{e-alpha}, extends with the same definition to $\mathbb C \times \mathbb C^{n-1}_*.$
\end{lemma}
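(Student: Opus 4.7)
The strategy is to expand $f \in \ker(\mathscr M^*_z - \overline{w})$ with respect to the orthonormal basis $\{e_\alpha\}_{\alpha \in \mathbb Z^n_+}$ from Proposition~\ref{on-basis}, convert the eigenvalue equations into recurrence relations on the Fourier coefficients $c_\alpha = \langle f, e_\alpha \rangle$, and use the weighted-multishift factorization \eqref{shift-multi} to reduce to a much simpler joint eigenvalue problem for $\mathscr W^*=(\mathscr W_1^*,\ldots,\mathscr W_n^*)$.

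\textbf{Step 1 (from $\mathscr M_z^*$-eigenvectors to $\mathscr W^*$-eigenvectors).} Assume first $w\in\mathbb C\times\mathbb C^{n-1}_*$, i.e.\ $w_2,\ldots,w_n\neq 0$. Using \eqref{shift-multi}, I would argue inductively from $k=n$ down to $k=1$ that $\mathscr W_k^* f = \overline{\varphi(w)_k}\,f$. The base case is immediate since $\mathscr M^*_{z_n}=\mathscr W^*_n$ and $\varphi(w)_n=w_n$. For the step, given $\mathscr W_k^*f=\overline{\varphi(w)_k}f$ for $k>j$, apply $\mathscr M^*_{z_j} = \mathscr W_j^*\,\mathscr M^*_{z_{j+1}}$ and divide by the nonzero scalar $\overline{w}_{j+1}=\prod_{k=j+1}^n\overline{\varphi(w)_k}$ to obtain $\mathscr W^*_j f=\overline{\varphi(w)_j}\,f$. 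Thus joint eigenvectors of $\mathscr M_z^*$ at $\overline{w}$ coincide with joint eigenvectors of $\mathscr W^*$ at $\overline{\varphi(w)}$.

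\textbf{Step 2 (solving the shift recurrence).} By taking the adjoint of \eqref{wt-shift-action} and matching coefficients, the equation $\mathscr W_j^* f = \overline{\varphi(w)_j}f$ is equivalent to
\[
c_{\beta+\varepsilon_j}\,\frac{\sqrt{A_{P,m}(\beta)}}{\sqrt{A_{P,m}(\beta+\varepsilon_j)}}=\overline{\varphi(w)_j}\,c_\beta,\qquad \beta\in\mathbb Z^n_+,\ j=1,\ldots,n.
\]
These commute, so iteration from $\beta=0$ (using $A_{P,m}(0)=1$ from \eqref{rho-alpha-f-new}) yields the unique solution $c_\alpha = c_0\sqrt{A_{P,m}(\alpha)}\,\overline{\varphi(w)}^{\,\alpha}$. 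Plugging this into $f=\sum_\alpha c_\alpha e_\alpha$ and comparing with \eqref{e-alpha}, which gives $\sqrt{A_{P,m}(\alpha)}\,\overline{\varphi(w)}^{\,\alpha}=\overline{e_\alpha(w)}\prod_{k=2}^n\overline{w}_k$, produces exactly the formula \eqref{f-must-be-k} with $c_0=\langle f,e_0\rangle$.

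\textbf{Step 3 (closing the two cases).} For $w\in\triangle^{\!n}_{_P}$, the kernel expansion \eqref{kernel-basis-exp} shows that the series in \eqref{f-must-be-k} converges in $\mathscr H^2_m(\triangle^{\!n}_{_P})$ to $\prod_{k=2}^n\overline{w}_k\,\mathscr K_{_{P,m}}(\cdot,w)$, so $f$ is a scalar multiple of $\mathscr K_{_{P,m}}(\cdot,w)$; the reverse inclusion is \eqref{eigen-v}. For $w=0$, Step 1 breaks down (the divisions by $\overline{w}_{j+1}$ fail), so I would handle this case directly: using $\mathscr M^*_{z_j}=\bigl(\prod_{k=j}^{n-1}\mathscr W_k^*\bigr)\mathscr M^*_{z_n}$ shows $\ker\mathscr M^*_{z_n}\subseteq \ker \mathscr M^*_{z_j}$ for all $j$, hence $\ker(\mathscr M_z^*)=\ker \mathscr M_{z_n}^*=\ker\mathscr W_n^*$, and by the action \eqref{wt-shift-action} this equals $\bigvee\{e_\alpha:\alpha_n=0\}$, as claimed.

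\textbf{Main obstacle.} The delicate point is the inductive descent in Step 1: it uses division by $\overline{w}_{j+1}$, and this is precisely the reason the hypothesis $w\in\mathbb C\times\mathbb C_*^{n-1}$ is needed for \eqref{f-must-be-k} and why the origin must be treated separately. A subtlety worth checking is that even when $w\in\triangle^{\!n}_{_P}$ has $w_1=0$, Step 1 still applies (only $w_2,\ldots,w_n$ must be nonzero), so the one-dimensionality of the kernel persists up to the coordinate hyperplane $\{z_1=0\}\cap\triangle^{\!n}_{_P}$, while the jump to infinite dimension occurs only at the distinguished boundary point $0$.
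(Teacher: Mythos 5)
Your proposal is correct, and it reaches the same coefficient formula $d_\alpha = d_0\sqrt{A_{P,m}(\alpha)}\,\overline{\varphi(w)}^{\,\alpha}$ as the paper, but by a genuinely more streamlined route. The paper works directly with the joint eigenvalue equations for $\mathscr M^*_{z_j}$, whose recurrences \eqref{letting-j} advance the multi-index by the composite steps $\varepsilon^{(j)}=\sum_{k=j}^n\varepsilon_k$; disentangling these requires the rather long elimination carried out in \eqref{4.6-new}--\eqref{upperwale ka naam} and the chain of displays that follows, where the division by powers of $\overline{w}_{j+1}$ happens at the level of coefficients. You instead perform that division once, at the operator level: using the factorization \eqref{shift-multi} you convert a joint eigenvector of $\mathscr M^*_z$ at $\overline{w}$ into a joint eigenvector of the multishift tuple $\mathscr W^*$ at $\overline{\varphi(w)}$ (legitimate exactly because $w_2,\ldots,w_n\neq 0$, which is where the hypothesis $w\in\mathbb C\times\mathbb C^{n-1}_*$ enters), and the resulting single-step recurrences from \eqref{wt-shift-action} solve in one line from $A_{P,m}(0)=1$. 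The remaining bookkeeping — identifying the series with $\prod_{k=2}^n\overline{w}_k\,\mathscr K_{_{P,m}}(\cdot,w)$ via \eqref{e-alpha} and \eqref{kernel-basis-exp}, invoking \eqref{eigen-v} for the reverse inclusion, and treating $w=0$ separately through $\ker\mathscr M^*_z=\ker\mathscr W^*_n$ — matches the paper's treatment. What your approach buys is a shorter and more conceptual derivation that makes transparent why the origin is exceptional; what the paper's direct computation buys is that it never needs to isolate $\mathscr W^*_j f$ as an identity in its own right, only relations among the $d_\alpha$. One cosmetic remark: your Step 1 needs no induction — for each fixed $j$ the identity $\mathscr W^*_jf=\overline{\varphi(w)_j}f$ follows directly from the two hypotheses $\mathscr M^*_{z_j}f=\overline{w}_jf$ and $\mathscr M^*_{z_{j+1}}f=\overline{w}_{j+1}f.$
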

\begin{proof}
Let $f$ belong to $\ker(\mathscr M^*_z - \overline{w})$ for some $w \in \mathbb C \times \mathbb C^{n-1}_*.$ 
By Proposition~\ref{on-basis}, $f=\sum_{\alpha \in \mathbb Z^n_+} d_{\alpha}e_{\alpha}$ for some complex numbers $d_\alpha.$ 
Set 
\beqn \varepsilon^{(j)} = \sum_{k=j}^n \varepsilon_k, 
\quad j=1, \ldots, n. 
\eeqn
Note that for $j=1, \ldots, n,$ 
\beqn
\mathscr M^*_{z_j}f &\overset{\eqref{action-adjoint}}=& 
\sum_{\substack{\alpha \in \mathbb Z^n_+ \\ \alpha \Ge  \varepsilon^{(j)}}} 
d_\alpha \, \frac{\sqrt{A_{P, m}(\alpha - \varepsilon^{(j)})}}{\sqrt{A_{P, m}(\alpha)}}\,e_{\alpha -  \varepsilon^{(j)}} \\
&=& \sum_{\alpha \in \mathbb Z^n_+} d_{\alpha + \varepsilon^{(j)}}\,
\frac{\sqrt{A_{P, m}(\alpha)}}{\sqrt{A_{P, m}(\alpha + \varepsilon^{(j)})}}\,e_{\alpha}.
\eeqn
Since $\mathscr M^*_{z_j}f = \overline{w}_j f,$ by comparing coefficients of $e_\alpha$ on both sides, we get 
\beq \label{letting-j}
d_{\alpha + \varepsilon^{(j)}}
\frac{\sqrt{A_{P, m}(\alpha)}}{\sqrt{A_{P, m}(\alpha + \varepsilon^{(j)})}} = \overline{w}_j d_\alpha, \quad \alpha \in \mathbb Z^n_+,~ j=1, \ldots, n,
\eeq
and therefore, after replacing $\alpha$ by $\alpha -\alpha \diamond \varepsilon^{(j)},$ we obtain 
\beq \label{4.6-new}
d_{\alpha -\alpha \diamond \varepsilon^{(j)}  + \varepsilon^{(j)}}
 = \overline{w}_j \frac{\sqrt{A_{P, m}(\alpha -\alpha \diamond \varepsilon^{(j)}  + \varepsilon^{(j)})}}{\sqrt{A_{P, m}(\alpha-\alpha \diamond \varepsilon^{(j)} )}}\,d_{\alpha -\alpha \diamond \varepsilon^{(j)}}, \quad \alpha \in \mathbb Z^n_+. 
\eeq
Fix $\alpha \in \mathbb Z^n_+$ and note that    
\beqn
&&  d_{\alpha -\alpha \diamond \varepsilon^{(j)}  + 2\varepsilon^{(j)}} \\
 &\overset{\eqref{letting-j}}=&  \overline{w}_j \frac{\sqrt{A_{P, m}(\alpha -\alpha \diamond \varepsilon^{(j)}  + 2\varepsilon^{(j)})}}{\sqrt{A_{P, m}(\alpha-\alpha \diamond \varepsilon^{(j)} + \varepsilon^{(j)})}}\,d_{\alpha -\alpha \diamond \varepsilon^{(j)} + \varepsilon^{(j)}}
 \\
 &\overset{\eqref{4.6-new}}=& \overline{w}^2_j \frac{\sqrt{A_{P, m}(\alpha -\alpha \diamond \varepsilon^{(j)}  + 2\varepsilon^{(j)})}}{\sqrt{A_{P, m}(\alpha-\alpha \diamond \varepsilon^{(j)} )}}\,d_{\alpha -\alpha \diamond  \varepsilon^{(j)}}, 
\eeqn
and hence by a finite induction, we get
\beq \label{ind-a-alpha}
& d_{\alpha -\alpha \diamond \varepsilon^{(j)} + \ell \varepsilon^{(j)}}
 = \displaystyle \overline{w}^{\ell}_j \frac{\sqrt{A_{P, m}(\alpha -\alpha \diamond \varepsilon^{(j)}  + \ell \varepsilon^{(j)})}}{\sqrt{A_{P, m}(\alpha -\alpha \diamond \varepsilon^{(j)})}} d_{\alpha -\alpha \diamond \varepsilon^{(j)}}, \notag \\ & \quad  j=1, \ldots, n, ~\ell \in \mathbb Z_+.
\eeq
Letting $(j, \ell)=(n-1, \alpha_{n-1}), (n, \alpha_{n}), (n, \alpha_{n-1})$ in \eqref{ind-a-alpha}, we obtain
\beq \label{middle-eqn} \notag
\displaystyle d_{\alpha - \alpha_n \varepsilon_n + \alpha_{n-1} \varepsilon_n}
 &=& \displaystyle \overline{w}^{\alpha_{n-1}}_{n-1} \frac{\sqrt{A_{P, m}(\alpha - \alpha_n \varepsilon_n + \alpha_{n-1} \varepsilon_n)}}{\sqrt{A_{P, m}(\alpha -\alpha \diamond \varepsilon^{(n-1)})}}\, d_{\alpha -\alpha \diamond \varepsilon^{(n-1)}}, \\
 d_{\alpha} &=& \overline{w}^{\alpha_n}_n \frac{\sqrt{A_{P, m}(\alpha)}}{\sqrt{A_{P, m}(\alpha - \alpha_n \varepsilon_n)}} d_{\alpha - \alpha_n \varepsilon_n}, 
\\ \notag 
 d_{\alpha - \alpha_n \varepsilon_n + \alpha_{n-1} \varepsilon_n} &=& \overline{w}^{\alpha_{n-1}}_n \frac{\sqrt{A_{P, m}(\alpha - \alpha_n \varepsilon_n + \alpha_{n-1} \varepsilon_n)}}{\sqrt{A_{P, m}(\alpha - \alpha_n \varepsilon_n)}} d_{\alpha - \alpha_n \varepsilon_n}, \quad  \alpha \in \mathbb Z^n_+. 
\eeq
After equating the first and the third identity, and solving it for $d_{\alpha - \alpha_n \varepsilon_n},$  for any $\alpha \in \mathbb Z^n_+,$ we get  
\beqn
\displaystyle d_{\alpha - \alpha_n \varepsilon_n}=\Big(\frac{\overline{w}_{n-1}}{\overline{w}_n}\Big)^{\alpha_{n-1}} \frac{\sqrt{A_{P, m}(\alpha - \alpha_n \varepsilon_n)}}{\sqrt{A_{P, m}(\alpha -\alpha \diamond \varepsilon^{(n-1)})}} \,  \displaystyle d_{\alpha -\alpha \diamond \varepsilon^{(n-1)}}.
\eeqn
This combined with \eqref{middle-eqn} gives 
\beq \label{upperwale ka naam}
d_{\alpha}=\overline{w}^{\alpha_n}_n \Big(\frac{\overline{w}_{n-1}}{\overline{w}_n}\Big)^{\!\alpha_{n-1}} \!\! \frac{\sqrt{A_{P, m}(\alpha)}}{\sqrt{A_{P, m}(\alpha -\alpha \diamond \varepsilon^{(n-1)})}} \displaystyle d_{\alpha -\alpha \diamond \varepsilon^{(n-1)}}. 
\eeq
Letting $(j, \ell)=(n-2, \alpha_{n-2}), (n-1, \alpha_{n-2})$ in \eqref{ind-a-alpha}, and arguing similarly, we obtain  
\beqn
 && d_{\alpha -\alpha \diamond \varepsilon^{(n-1)} + \alpha_{n-2} \varepsilon^{(n-1)}} \\
 &=& \displaystyle \overline{w}^{\alpha_{n-2}}_{n-2} \frac{\sqrt{A_{P, m}(\alpha -\alpha \diamond \varepsilon^{(n-1)} + \alpha_{n-2} \varepsilon^{(n-1)})}}{\sqrt{A_{P, m}(\alpha -\alpha \diamond \varepsilon^{(n-2)})}} d_{\alpha -\alpha \diamond \varepsilon^{(n-2)}},   \\
&&  d_{\alpha -\alpha \diamond \varepsilon^{(n-1)}  + \alpha_{n-2} \varepsilon^{(n-1)}} \\
 &=& \displaystyle \overline{w}^{\alpha_{n-2}}_{n-1} \frac{\sqrt{A_{P, m}(\alpha -\alpha \diamond \varepsilon^{(n-1)}  + \alpha_{n-2} \varepsilon^{(n-1)})}}{\sqrt{A_{P, m}(\alpha -\alpha \diamond \varepsilon^{(n-1)})}} d_{\alpha -\alpha \diamond \varepsilon^{(n-1)}}. 
\eeqn
Equating these identities, we get
\beqn
d_{\alpha -\alpha \diamond \varepsilon^{(n-1)}} 
= 
\displaystyle 
\Big(\frac{\overline{w}_{n-2}}{\overline{w}_{n-1}}\Big)^{\alpha_{n-2}} \frac{\sqrt{A_{P, m}(\alpha -\alpha \diamond \varepsilon^{(n-1)})}}{\sqrt{A_{P, m}(\alpha -\alpha \diamond \varepsilon^{(n-2)})}} d_{\alpha -\alpha \diamond \varepsilon^{(n-2)}}.
\eeqn
Combining this with \eqref{upperwale ka naam} and continuing along similar lines yields 
\allowdisplaybreaks
\beqn
 d_{\alpha} && = \prod_{k=n-2}^{n-1}\Big(\frac{\overline{w}_{k}}{\overline{w}_{k+1}}\Big)^{\alpha_{k}}\overline{w}^{\alpha_n}_n  \frac{\sqrt{A_{P, m}(\alpha)}}{\sqrt{A_{P, m}(\alpha - \alpha \diamond \varepsilon^{(n-2)})}} d_{\alpha - \alpha \diamond \varepsilon^{(n-2)}} \\
& & \vdots  \\
&&  = \prod_{k=1}^{n-1}\Big(\frac{\overline{w}_{k}}{\overline{w}_{k+1}}\Big)^{\alpha_{k}}\overline{w}^{\alpha_n}_n  \frac{\sqrt{A_{P, m}(\alpha)}}{\sqrt{A_{P, m}(\alpha - \alpha \diamond \varepsilon^{(1)})}} d_{\alpha - \alpha \diamond \varepsilon^{(1)}} \\ &&  = \prod_{k=1}^{n-1}\Big(\frac{\overline{w}_{k}}{\overline{w}_{k+1}}\Big)^{\alpha_{k}}\overline{w}^{\alpha_n}_n \sqrt{A_{P, m}(\alpha)} \, d_{{0}}, \quad \alpha \in \mathbb Z^n_+,
\eeqn
where we used the fact that $A_{P, m}(0)=1$ (see \eqref{rho-alpha-f-new}). 
Thus $f$ takes the form
\beqn 
f(z) &=& d_{{0}} \sum_{\alpha \in \mathbb Z^n_+} \prod_{k=1}^{n-1}\Big(\frac{\overline{w}_{k}}{\overline{w}_{k+1}}\Big)^{\alpha_{k}}\overline{w}^{\alpha_n}_n \sqrt{A_{P, m}(\alpha)} \,  e_{\alpha}(z) \\
&\overset{\eqref{e-alpha}}=& \inp{f}{e_0} \prod_{k=2}^n\overline{w}_k \sum_{\alpha \in \mathbb Z^n_+} \overline{e_{\alpha}(w)}e_{\alpha}(z).
\eeqn
This yields \eqref{f-must-be-k}. 

If $w \in \triangle^{\!n}_{_P},$ then  
by \eqref{kernel-basis-exp} and \eqref{f-must-be-k}, 
$\triangle^{\!n}_{_P} \subseteq \sigma_p(\mathscr M^*_z)$ and 
$f$ is a scalar multiple of $\mathscr K_{_{P, m}}(\cdot, w).$ This together with \eqref{eigen-v} and \eqref{0-pt-s} completes the proof of the lemma.
\end{proof}

\begin{proof}[Proof of Theorem~\ref{mult-h-infty}]
(i) Let $\mathscr W$ be the unilateral weighted multishift associated with $\mathscr M_z$ (see \eqref{wt-shift-action}). By Remark~\ref{noeigen},  $\mathscr M_{z_1}$ is injective, and hence by \eqref{shift-multi}, $\mathscr W_1, \ldots, \mathscr W_n$ are injective. Hence, by \cite[Corollary~12]{JL1979}, the commutant $\{\mathscr W\}'$ of $\mathscr W$ is a maximal abelian subalgebra of $\mathcal B(\mathscr H^2_m(\triangle^{\!n}_{_P})).$ Thus, it suffices to check that $\{\mathscr M_z\}'=\{\mathscr W\}'.$ Clearly, by
\eqref{shift-multi}, $\{\mathscr W\}'$  is a subset of  $\{\mathscr M_z\}'.$ To see the reverse inclusion, let $A \in \mathcal B(\mathscr H^2_m(\triangle^{\!n}_{_P}))$ be such that 
\beq \label{prop-commutant}
A \mathscr M_{z_j} = \mathscr M_{z_j}A, \quad j=1, \ldots, n. 
\eeq
Since $\mathscr M_{z_n}=\mathscr W_n$ (see \eqref{shift-multi}), $A$ commutes with $\mathscr W_n.$ This combined with \eqref{prop-commutant} (with $j=n-1$) and \eqref{shift-multi} yields  
\beqn
\mathscr W_n A \mathscr W_{n-1}  &=&  A \mathscr W_n \mathscr W_{n-1} = A \mathscr W_{n-1} \mathscr W_{n} \\
&=& A \mathscr M_{z_{n-1}}=\mathscr M_{z_{n-1}}  A \\
&=& \mathscr W_n \mathscr W_{n-1}A.
\eeqn
Since $\mathscr W_n$ is injective, $A$ commutes with $\mathscr W_{n-1}.$ One may now proceed by a finite induction to conclude that $A$ belongs to the commutant of $\mathscr W.$ 

(ii) Clearly, the multiplier algebra of $\mathscr H^{2}_m(\triangle^{\!n}_{_P})$ is a subset of $\{\mathscr M_z\}'.$ To see the reverse inclusion, 
let $A \in \mathcal B(\mathscr H^2_m(\triangle^{\!n}_{_P}))$ be such that $A\mathscr M_{z_j} = \mathscr M_{z_j} A,$ $j=1, \ldots, n.$ 
Since $A^*\mathscr M^*_{z_j} = \mathscr M^*_{z_j} A^*,$ $j=1, \ldots, n,$
\beqn
\mathscr M^*_{z_j} A^*\mathscr K_{_{P, m}}(\cdot, w) = \overline{w}_j A^* \mathscr K_{_{P, m}}(\cdot, w), \quad w \in \triangle^{\!n}_{_P}.
\eeqn
Thus $A^*\mathscr K_{_{P, m}}(\cdot, w)$ belongs to $\ker \mathscr M^*_z,$ and hence, by Lemma~\ref{spectral-lem}, there exists $\eta(w) \in \mathbb C$ such that
\beqn
A^*\mathscr K_{_{P, m}}(\cdot, w) = \eta(w) \mathscr K_{_{P, m}}(\cdot, w), \quad w \in \triangle^{\!n}_{_P}.
\eeqn
Since $A^*\mathscr K_{_{P, m}}(\cdot, w)$ and $\frac{1}{\mathscr K_{_{P, m}}(\cdot, w)}$ are anti-holomorphic in $w,$ so is $\eta.$ Also, it is easy to see using \eqref{rp} that for any $f \in \mathscr H^{2}_m(\triangle^{\!n}_{_P}),$ $Af=\overline{\eta}f.$ Since $A$ is a bounded linear operator on $\mathscr H^{2}_m(\triangle^{\!n}_{_P}),$ $\overline{\eta}$ is a multiplier of $\mathscr H^{2}_m(\triangle^{\!n}_{_P}).$ 
This shows that $\{\mathscr M_{z}\}'$ is equal to the multiplier algebra of $\mathscr H^{2}_m(\triangle^{\!n}_{_P}).$ 

(iii) This is obtained by combining (ii) with \cite[Corollary~5.22]{PR2016}. 
\end{proof}

The multiplication $n$-tuple $\mathscr M_z$ on  $\mathscr H^2_m(\triangle^{\!n}_{_P})$ is always irreducible. 
\begin{corollary} \label{irrducible}
The multiplication $n$-tuple $\mathscr M_z$ on $\mathscr H^2_m(\triangle^{\!n}_{_P})$ does not have a proper joint reducing subspace. 
\end{corollary}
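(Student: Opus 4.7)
The plan is to reduce this to the observation that any idempotent multiplier of $\mathscr H^2_m(\triangle^{\!n}_{_P})$ must be trivial. If $\mathcal M$ is a joint reducing subspace with orthogonal projection $P$, then $P$ commutes with each $\mathscr M_{z_j}$ (the defining condition of $\mathcal M$ reducing $\mathscr M_{z_j}$), so $P \in \{\mathscr M_z\}'$. By Theorem~\ref{mult-h-infty}(ii), there is a multiplier $\phi$ of $\mathscr H^2_m(\triangle^{\!n}_{_P})$ with $P = \mathscr M_\phi$.

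Next I would verify that $\phi$ is holomorphic on $\triangle^{\!n}_{_P}$ and $\{0,1\}$-valued. For holomorphicity, I would apply $\mathscr M_\phi$ to the basis element $e_0(z) = 1/\prod_{j=2}^{n} z_j$ from \eqref{e-alpha}: the function $\phi \cdot e_0 = P e_0$ lies in $\mathscr H^2_m(\triangle^{\!n}_{_P})$ and is therefore holomorphic on $\triangle^{\!n}_{_P}$, while $e_0$ is nowhere zero there. Hence $\phi = (Pe_0)/e_0$ is a quotient of holomorphic functions with nonvanishing denominator, and is itself holomorphic. From $P^2 = P$ we obtain $\phi^2 e_0 = \phi e_0$ as elements of $\mathscr H^2_m(\triangle^{\!n}_{_P})$, and cancellation of the nowhere zero $e_0$ yields the pointwise identity $\phi(z)^2 = \phi(z)$ on $\triangle^{\!n}_{_P}$; thus $\phi(\triangle^{\!n}_{_P}) \subseteq \{0,1\}$.

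To finish, I would invoke the connectedness of $\triangle^{\!n}_{_P}$ (Proposition~\ref{DoH}) together with the continuity of $\phi$ to conclude that $\phi$ is constant, equal to either $0$ or $1$, so that $P$ is either $0$ or $I$ and $\mathcal M$ is not proper. I do not expect any real obstacle here: all three ingredients—the multiplier description of the commutant, the explicit nowhere-vanishing basis vector $e_0$, and connectedness of the $P$-triangle—are already in place in the preceding sections, and the only thing to check carefully is the holomorphicity of $\phi$, which is handled cleanly by the division argument.
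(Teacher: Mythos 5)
Your proposal is correct and follows essentially the same route as the paper: identify the projection with a multiplier via Theorem~\ref{mult-h-infty}(ii), deduce $\phi^2=\phi$ pointwise, and conclude by connectedness of $\triangle^{\!n}_{_P}$. The only cosmetic difference is that you extract the pointwise idempotency by cancelling the nowhere-vanishing basis vector $e_0$, whereas the paper reads it off from the reproducing property \eqref{rp}; both are equally valid.
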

\begin{proof} If $\mathscr M_z$ has a joint reducing subspace $\mathscr N,$ then the orthogonal projection $P$ of $\mathscr H^2_m(\triangle^{\!n}_{_P})$ onto $\mathscr N$ belongs to the commutant of $\mathscr M_z.$ By Theorem~\ref{mult-h-infty}(ii),  $P=\mathscr M_\phi$ for some $\phi \in \mathscr H^\infty(\triangle^{\!n}_{_P}).$ Since $P^2=P,$ 
by \eqref{rp}, $(\phi(w)-1)\phi(w)=0$ for every $w \in \triangle^{\!n}_{_P}.$ Since $\triangle^{\!n}_{_P}$ is connected (see Proposition~\ref{DoH}), $\phi=0$ or $\phi=1,$ or equivalently, $P=0$ or $P=I.$
\end{proof}

As a by-product of the proof of Theorem~\ref{mult-h-infty}, we describe a part of the point spectrum of the adjoint of the multiplication $n$-tuple $\mathscr M_z$ on $\mathscr H^2_m(\triangle^{\!n}_{_P}).$
\begin{theorem} 
\label{theorem-pt-spectrum}
If $\mathscr M_z$ is the multiplication $n$-tuple on $\mathscr H^2_m(\triangle^{\!n}_{_P}),$ then 
$$\sigma_p(\mathscr M^*_z) \cap (\mathbb C \times \mathbb C^{n-1}_*) =\triangle^{\!n}_{_P}.$$
\end{theorem}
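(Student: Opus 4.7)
My plan is to prove the two inclusions separately. The inclusion $\triangle^{\!n}_{_P} \subseteq \sigma_p(\mathscr M^*_z) \cap (\mathbb C \times \mathbb C^{n-1}_*)$ is essentially free: \eqref{eigen-v} already exhibits $\mathscr K_{_{P,m}}(\cdot, w)$ as a nonzero joint eigenvector of $\mathscr M^*_z$ with eigenvalue $\overline{w}$ for every $w \in \triangle^{\!n}_{_P}$, and $\triangle^{\!n}_{_P} \subseteq \mathbb C \times \mathbb C^{n-1}_*$ by \eqref{gen-H-tri}. The substance of the theorem is the reverse inclusion.

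For that, I would fix $w \in (\mathbb C \times \mathbb C^{n-1}_*) \cap \sigma_p(\mathscr M^*_z)$ together with a nonzero $f \in \ker(\mathscr M^*_z - \overline{w})$. The formula \eqref{f-must-be-k} in Lemma~\ref{spectral-lem} does most of the work: it forces $f$ to be a scalar multiple of $\sum_{\alpha \in \mathbb Z^n_+} \overline{e_\alpha(w)} e_\alpha$, and since $w_2, \ldots, w_n$ are nonzero, the scalar $\inp{f}{e_0} \prod_{k=2}^n \overline{w}_k$ must itself be nonzero (otherwise the formula yields $f \equiv 0$, contradicting the assumption). Because $\{e_\alpha\}_{\alpha \in \mathbb Z^n_+}$ is orthonormal in $\mathscr H^2_m(\triangle^{\!n}_{_P})$ by Proposition~\ref{on-basis}, the displayed series lies in $\mathscr H^2_m(\triangle^{\!n}_{_P})$ if and only if $\sum_\alpha |e_\alpha(w)|^2 < \infty$, and by \eqref{e-alpha} this reduces, after clearing the harmless prefactor $\prod_{j=2}^n |w_j|^{-2}$, to the square-summability condition
\[
\sum_{\alpha \in \mathbb Z^n_+} A_{P,m}(\alpha) \, |\varphi(w)^\alpha|^2 \,<\, \infty.
\]
So the whole theorem boils down to showing that this finiteness forces $w \in \triangle^{\!n}_{_P}$.

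I would run this by contrapositive. If $w \notin \triangle^{\!n}_{_P}$, then by \eqref{gen-H-tri} there exists $j_0$ with $P_{j_0}(t) \Ge 1$ for $t := \varphi(w) \diamond \overline{\varphi(w)} \in [0,\infty)^n$. Selecting from \eqref{rho-alpha-f} the summand with $\gamma^{(j_0)} = \alpha$ and $\gamma^{(j)} = 0$ for $j \neq j_0$ and invoking \eqref{rho-alpha-f-new}, I obtain the pointwise lower bound $A_{P,m}(\alpha) \Ge A_{P_{j_0}, m_{j_0}}(\alpha)$, which reduces divergence of the multivariate series to divergence of the single-factor series $\sum_\alpha A_{P_{j_0}, m_{j_0}}(\alpha) t^\alpha$. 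For $s \in [0,1)$, \eqref{coeff-Pj-mj} gives $\sum_\alpha A_{P_{j_0}, m_{j_0}}(\alpha) (st)^\alpha = (1-P_{j_0}(st))^{-m_{j_0}}$; since $P_{j_0}$ has nonnegative coefficients with $P_{j_0}(0) = 0$, one has $P_{j_0}(st) < P_{j_0}(t)$ for $0 \Le s < 1$, so $P_{j_0}(st) \uparrow P_{j_0}(t) \Ge 1$ and the closed form blows up as $s \uparrow 1$. A standard monotone-convergence (Abel) argument for power series with nonnegative coefficients then forces the series at $s=1$ to be $+\infty$, completing the contrapositive.

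The only genuinely delicate point I anticipate is the boundary case $P_{j_0}(t) = 1$: the closed-form identity $(1-P_{j_0}(\cdot))^{-m_{j_0}}$ is only directly available strictly inside the $P_{j_0}$-ball, so one cannot read off divergence at $t$ itself from the closed form. Positive regularity of $P$ is precisely what underwrites both the strict inequality $P_{j_0}(st) < P_{j_0}(t)$ for $s < 1$ and the Abel-type passage from blow-up of the closed form along the ray $s \mapsto st$ to divergence of the nonnegative-term series at the endpoint $s=1$; modulo this technical point, everything else is a routine manipulation of \eqref{f-must-be-k}, \eqref{e-alpha}, and \eqref{exp}.
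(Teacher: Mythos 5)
Your proof is correct and follows essentially the same route as the paper: both establish $\triangle^{\!n}_{_P}\subseteq\sigma_p(\mathscr M^*_z)$ from the eigenvector $\mathscr K_{_{P,m}}(\cdot,w)$, and both obtain the reverse inclusion by using Lemma~\ref{spectral-lem} and \eqref{f-must-be-k} to reduce everything to the finiteness of $\sum_{\alpha\in\mathbb Z^n_+}A_{P,m}(\alpha)\,|\varphi(w)^{\alpha}|^2$, with your contrapositive divergence argument simply supplying the detail that the paper compresses into ``this combined with \eqref{coeff-Pj-mj} and \eqref{exp} yields the inclusion.'' The one cosmetic repair: when $P_{j_0}(t)>1$ the closed form $(1-P_{j_0}(st))^{-m_{j_0}}$ does not blow up as $s\uparrow 1$ (it tends to a finite value and ceases to represent the series once $P_{j_0}(st)\Ge 1$), so you should instead let $s$ increase to the unique $s_0\in(0,1]$ with $P_{j_0}(s_0t)=1$; monotonicity of the nonnegative-term series in $s$ then gives divergence at $s_0t$, hence at $t$.
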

\begin{proof} 
By Lemma~\ref{spectral-lem}, $\triangle^{\!n}_{_P} \subseteq \sigma_p(\mathscr M^*_z) \cap (\mathbb C \times \mathbb C^{n-1}_*).$ To see the reverse inclusion, let $w \in \mathbb C \times \mathbb C^{n-1}_*.$
Let $f \in \ker(\mathscr M^*_z-\overline{w})$ be nonzero. By Proposition~\ref{on-basis} and  
\eqref{f-must-be-k}, 
\beqn
\inp{f}{e_0} \neq 0~\mbox{and}~ 
\sum_{\alpha \in \mathbb Z^n_+}|e_\alpha(w)|^2 < \infty.
\eeqn
This together with \eqref{e-alpha} shows that 
$\ker(\mathscr M^*_z- \overline{w})$ is not orthogonal to $\{e_0\}$ and 
\beqn 
\sum_{\alpha \in \mathbb Z^n_+} A_{P, m}(\alpha) |\varphi(w)^{\alpha}|^2 
< \infty.
\eeqn
This combined with \eqref{coeff-Pj-mj} and \eqref{exp} yields  $\sigma_p(\mathscr M^*_z) \cap (\mathbb C \times \mathbb C^{n-1}_*) \subseteq \triangle^{\!n}_{_P}.$ 
\end{proof}

One can completely describe the point spectrum of the adjoint of the multiplication $2$-tuple $\mathscr M_z$ on $\mathscr H^2_m(\triangle^{\!2}_{_P}).$
\begin{corollary} \label{pt-spec-in-2}
If $\mathscr M_z$ is the multiplication $2$-tuple on $\mathscr H^2_m(\triangle^{\!2}_{_P}),$ then $$\sigma_p(\mathscr M^*_z) = \triangle^{\!2}_{_P} \cup \{0\}.$$
\end{corollary}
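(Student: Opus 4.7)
The plan is straightforward: the result is a direct combination of three ingredients already in place, because for $n=2$ the complement of $\mathbb{C}\times\mathbb{C}^{n-1}_*=\mathbb{C}\times\mathbb{C}_*$ in $\mathbb{C}^2$ is just the axis $\{(w_1,0):w_1\in\mathbb{C}\}$, which is very easy to analyse.

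First, I would invoke Theorem~\ref{theorem-pt-spectrum} with $n=2$ to get
\begin{equation*}
\sigma_p(\mathscr M^*_z)\cap(\mathbb{C}\times\mathbb{C}_*)=\triangle^{\!2}_{_P}.
\end{equation*}
This takes care of all eigenvalues whose second coordinate is nonzero and shows the inclusion $\triangle^{\!2}_{_P}\subseteq\sigma_p(\mathscr M^*_z)$. It therefore only remains to identify which points of the form $(w_1,0)$ lie in $\sigma_p(\mathscr M^*_z)$.

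Next, for the origin, I would apply Lemma~\ref{spectral-lem} (the $w=0$ case), which gives
\begin{equation*}
\ker(\mathscr M^*_z)=\bigvee\{e_\alpha:\alpha\in\mathbb{Z}^2_+,\ \alpha_2=0\}\neq\{0\},
\end{equation*}
so $0\in\sigma_p(\mathscr M^*_z)$. Finally, for a point $w=(w_1,0)$ with $w_1\neq 0$, I would appeal to Remark~\ref{after-action-adjoint}: since $w_2=0$ but $w_1\neq 0$, such a $w$ belongs to the forbidden set
\begin{equation*}
\{w\in\mathbb{C}^2: w_j\neq 0\text{ and }w_k=0\text{ for some }1\leq j<k\leq 2\},
\end{equation*}
which is disjoint from $\sigma_p(\mathscr M^*_z)$. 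Combining these three observations yields $\sigma_p(\mathscr M^*_z)=\triangle^{\!2}_{_P}\cup\{0\}$, as claimed.

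There is no real obstacle here; every piece of the argument is already available. The only thing to be careful about is the case distinction on $(w_1,0)$: one must separate $w_1=0$ (handled by Lemma~\ref{spectral-lem}) from $w_1\neq 0$ (handled by Remark~\ref{after-action-adjoint}). The dimensional jump from $\dim\ker(\mathscr M^*_z-\overline{w})=1$ on $\triangle^{\!2}_{_P}$ to $\dim\ker\mathscr M^*_z=\infty$ at the origin is already recorded in Lemma~\ref{spectral-lem}, so nothing further needs to be proved about the multiplicity of the eigenvalue $0$.
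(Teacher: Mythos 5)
Your proposal is correct and is exactly the paper's argument: the published proof simply cites Remark~\ref{after-action-adjoint}, Lemma~\ref{spectral-lem} and Theorem~\ref{theorem-pt-spectrum}, which are precisely the three ingredients you combine (and your case split on $(w_1,0)$ is the intended way to assemble them). Nothing is missing.
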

\begin{proof}
Apply Remark~\ref{after-action-adjoint}, Lemma~\ref{spectral-lem} and Theorem~\ref{theorem-pt-spectrum}. 
\end{proof}

\section{Subnormality, Hardy spaces and von Neumann's inequality} \label{S7}

In this section, we discuss the notion of subnormality in the context of the multiplication $n$-tuples $\mathscr M_z$ on $\mathscr H^2_m(\triangle^{\!n}_{_P})$ and discuss its 
role in the definition of a Hardy space on the Hartogs triangle (cf. \cites{GGLV2021, M2021}). 
Before we present a characterization of jointly subnormal $n$-tuples of the multiplication by the coordinate functions on $\mathscr H^2_m(\triangle^{\!n}_{_P}),$ let us recall some definitions. A multisequence $\{a_\beta\}_{\beta \in \mathbb Z^n_+}$ of positive real numbers is {\it Hausdorff moment} if there exists a finite positive Borel regular measure $\mu$ concentrated in some cube $C$ in the positive orthant of the
Euclidean space $\mathbb R^n$ such that 
\beqn
a_\beta  = \int_{C} t^\beta d\mu(t), \quad \beta \in \mathbb Z^n_+.
\eeqn
Note that for any positive constant $c,$ $\{c^{|\beta|}\}_{\beta \in \mathbb Z^n_+}$ is a Hausdorff moment multisequence. Indeed, 
\beq \label{c-eq}
c^{|\beta|} = \int_{[0, ~c]^n} t^{\beta} d\delta_{\bf c}, \quad \beta \in \mathbb Z^n_+, 
\eeq
where $\delta_{\bf c}$ denotes the Dirac delta measure with point mass at the $n$-tuple ${\bf c} = (c, \ldots, c).$
Recall from \cite[Lemma~8.2.1(v)]{BCR1984} that
\beq \label{product-moment}
&& \mbox{\it the product of Hausdorff moment multisequences is again a Hausdorff} \notag \\ && \mbox{\it moment multisequence.} 
\eeq

A commuting $n$-tuple $S=(S_1, \ldots, S_n)$ on $H$ is said to be {\it jointly subnormal} if there exist
a Hilbert space $K$ containing $H$ and a commuting $n$-tuple $
N$ on $K$ consisting of normal operators $N_1, \ldots, N_n$ such that 
\begin{eqnarray*}
N_jh = S_jh~\mbox{for~ every~}h \in H~ \mbox{and}~1\Le j \Le n.
\end{eqnarray*}
We say that $S$ is {\it separately subnormal} if $S_1, \ldots, S_n$ are subnormal operators. 
A commuting $n$-tuple $S=(S_1, \ldots, S_n)$ on $H$ is said to be {\it separately hyponormal} if $[S^*_j, S_j]$ is a positive operator for every $j=1, \ldots, n.$ It turns out that a jointly subnormal tuple is separately subnormal and a separately subnormal tuple is separately hyponormal (
the reader is referred to \cites{AP1990, C1988} for the basics of jointly subnormal tuples and related classes).   
\begin{remark}
The multiplication $n$-tuple  $\mathscr M_z$ on $\mathscr H^2_m(\triangle^{\!n}_{_P})$ is separately hyponormal if and only if 
\begin{equation*}
A_{P, m}\Big(\alpha + \sum_{k=j}^n \varepsilon_k\Big)A_{P, m}\Big(\alpha - \sum_{k=j}^n \varepsilon_k\Big) \Le A_{P, m}(\alpha)^2, ~ \alpha \in \mathbb Z^n_+,~j=1, \ldots, n. 
\end{equation*}
Indeed, 
a simple calculation using \eqref{action-basis} and \eqref{action-adjoint} shows that $[\mathscr M^*_{z_j}, \mathscr M_{z_j}]$ is a diagonal operator with diagonal entries 
given by 
\begin{equation*}
\inp{[\mathscr M^*_{z_j}, \mathscr M_{z_j}]e_\alpha}{e_\alpha} = \frac{A_{P, m}(\alpha)}{A_{P, m}\big(\alpha + \sum_{k=j}^n \varepsilon_k\big)} - \frac{A_{P, m}\big(\alpha - \sum_{k=j}^n \varepsilon_k\big)}{A_{P, m}(\alpha)}, ~\alpha \in \mathbb Z^n_+,
\end{equation*}
and hence the conclusion is immediate from \cite[Proposition~II.6.6]{Co1991}. \hfill $\diamondsuit$
\end{remark}

The following proposition characterizes jointly subnormal multiplication $n$-tuples $\mathscr M_z$ on $\mathscr H^2_m(\triangle^{\!n}_{_P}).$ 
\begin{proposition} \label{joint-s} 
The multiplication $n$-tuple $\mathscr M_z$ on $\mathscr H^2_m(\triangle^{\!n}_{_P})$ is jointly subnormal if and only if for every $\gamma \in \mathbb Z^n_+,$ $\Big\{\frac{1}{A_{P, m}(\gamma + \sum_{j=1}^n \beta_j \sum_{k=j}^n \varepsilon_k)}\Big\}_{\beta \in \mathbb Z^n_+}$ is a Hausdorff moment multisequence.
\end{proposition}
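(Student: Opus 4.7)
The plan is to interpret the multisequence in the statement operator-theoretically, using that by \eqref{action-power-Mz} one has, for every $\gamma, \beta \in \mathbb Z^n_+$,
\[
\|\mathscr M_z^\beta e_\gamma\|^2 \;=\; \frac{A_{P,m}(\gamma)}{A_{P,m}\!\Big(\gamma + \sum_{j=1}^n \beta_j \sum_{k=j}^n \varepsilon_k\Big)}.
\]
Since $A_{P,m}(\gamma)>0$ is a $\beta$-independent constant, the Hausdorff moment condition in the statement is equivalent to $\{\|\mathscr M_z^\beta e_\gamma\|^2\}_{\beta\in\mathbb Z^n_+}$ being a Hausdorff moment multisequence for every $\gamma \in \mathbb Z_+^n$. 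This is the dictionary through which I would prove joint subnormality.

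For the forward direction, I would invoke the spectral theorem for a commuting normal extension $N=(N_1,\ldots,N_n)$ of $\mathscr M_z$, afforded by hypothesis on some Hilbert space $\mathcal K \supseteq \mathscr H^2_m(\triangle^{\!n}_{_P})$. The scalar spectral measures $\mu_\gamma(\Delta):=\langle E(\Delta)e_\gamma,e_\gamma\rangle$, where $E$ is the joint spectral measure of $N$ on the compact set $\sigma(N) \subset \mathbb C^n$, yield
\[
\|\mathscr M_z^\beta e_\gamma\|^2 \;=\; \int_{\sigma(N)} |z^\beta|^2\, d\mu_\gamma(z).
\]
Pushforward by $z\mapsto(|z_1|^2,\ldots,|z_n|^2)$, supported on a compact cube $[0,R]^n\subset\mathbb R^n_+$ (with $R$ dominating the spectral radii of the $N_j$'s), delivers the required Hausdorff representation.

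For the reverse direction, the plan is to reduce to a several-variable Berger-type theorem for commuting weighted multishifts. By \eqref{powers-orthogonal}, for each $\gamma$ the orbit $\{\mathscr M_z^\beta e_\gamma : \beta \in \mathbb Z^n_+\}$ is mutually orthogonal, and after normalization becomes an orthonormal basis for the cyclic $\mathscr M_z$-invariant subspace $\mathscr H^{(\gamma)} := \bigvee\{\mathscr M_z^\beta e_\gamma : \beta \in \mathbb Z_+^n\}$. In this basis the restriction $\mathscr M_z|_{\mathscr H^{(\gamma)}}$ is realized as a classical commuting weighted multishift on $\ell^2(\mathbb Z^n_+)$ whose moment multisequence from the canonical cyclic vector equals $\{\|\mathscr M_z^\beta e_\gamma\|^2\}_\beta$; a Jewell--Lubin-type several-variable Berger theorem (in the spirit of \cite{JL1979}) then identifies joint subnormality of this restriction with the Hausdorff property of the moment multisequence.

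The main obstacle is the passage from joint subnormality on each cyclic piece $\mathscr H^{(\gamma)}$ to joint subnormality of $\mathscr M_z$ on the whole space, since the subspaces $\mathscr H^{(\gamma)}$ are generally not pairwise orthogonal and admit no naive direct-sum decomposition. I expect to handle this by exploiting the compatibility
\[
A_{P,m}(\gamma+\delta_j)\, t_j\, d\mu_\gamma(t) \;=\; A_{P,m}(\gamma)\, d\mu_{\gamma+\delta_j}(t), \qquad \delta_j:=\textstyle\sum_{k=j}^n\varepsilon_k,
\]
between the representing measures, forced by uniqueness of Hausdorff moments on compact sets and equivalent to \eqref{action-power-Mz} read along two different orbits. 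This coherence should permit the $\mu_\gamma$'s to be assembled into a positive operator-valued measure on $[0,R]^n$ (circularized to $\mathbb C^n$ via the natural $\mathbb T^n$-action) from which a commuting normal extension of $\mathscr M_z$ is produced in the standard direct-integral fashion.
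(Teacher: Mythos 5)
Your forward direction (spectral measure of a commuting normal extension, pushed forward under $z\mapsto(|z_1|^2,\ldots,|z_n|^2)$ to a compact cube) is correct and standard. The genuine gap is in the reverse direction, and it is exactly the obstacle you flag yourself: joint subnormality of each restriction $\mathscr M_z|_{\mathscr H^{(\gamma)}}$ does not yield joint subnormality of $\mathscr M_z$ on $\bigvee_\gamma \mathscr H^{(\gamma)}$, because the cyclic subspaces overlap without being orthogonal (already for $n=2$, with $\delta_1=(1,1)$ and $\delta_2=(0,1)$, the vector $e_{(1,1)}$ lies in both $\mathscr H^{((0,0))}$ and $\mathscr H^{((1,0))}$). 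The ``coherence'' relation you write between the representing measures $\mu_\gamma$ constrains only the diagonal entries $\inp{E(\cdot)e_\gamma}{e_\gamma}$ of the sought operator-valued measure; constructing a normal extension requires positivity of the full moment matrix $\inp{\mathscr M_z^{\beta}e_\gamma}{\mathscr M_z^{\beta'}e_{\gamma'}}$ jointly in $(\gamma,\beta)$ and $(\gamma',\beta')$, and nothing in your sketch controls the off-diagonal part. As written, the ``assemble a POVM / direct integral'' step is a hope, not an argument.

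The paper sidesteps the gluing problem entirely. It invokes the Athavale--Pedersen criterion \cite{AP1990}: after rescaling by $c=\max_j\|\mathscr M_{z_j}\|$ so that the tuple is contractive, $\mathscr M_z$ is jointly subnormal if and only if $\{\|\mathscr M_z^{\beta}f\|^2\}_{\beta\in\mathbb Z^n_+}$ is a Hausdorff moment multisequence for \emph{every} vector $f\in\mathscr H^2_m(\triangle^{\!n}_{_P})$ (the rescaling is absorbed via \eqref{c-eq} and \eqref{product-moment}). Then, by the orthogonality \eqref{powers-orthogonal} and \eqref{action-power-Mz}, one has $\|\mathscr M_z^{\beta}f\|^2=\sum_{\alpha}|\inp{f}{e_\alpha}|^2\,\|\mathscr M_z^{\beta}e_\alpha\|^2$, a positive combination of the basis moment multisequences; since the Hausdorff moment multisequences form a closed convex cone \cite{BCR1984}, the condition for all $f$ is equivalent to the condition for the $e_\gamma$ alone, which is the multisequence in the statement up to the positive constant $A_{P,m}(\gamma)$. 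This convex-cone reduction is precisely the step your approach is missing; if you wish to keep the restriction-to-cyclic-subspaces picture, you would still need an argument of this type (or a genuinely new joint-positivity argument) to pass from the pieces to the whole.
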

\begin{proof}
Let $c=\max\{\|\mathscr M_{z_j}\| : j=1, \ldots, n\}$ and note that 
$c$ is a positive real number. Clearly, 
$\mathscr M_z$ is jointly subnormal if and only if $c^{-1}\mathscr M_z := (c^{-1}\mathscr M_{z_1}, \ldots, c^{-1}\mathscr M_{z_n})$ is jointly subnormal. Moreover, the $n$-tuple $c^{-1}\mathscr M_z$ consists of commuting contractions (that is, $\|c^{-1}\mathscr M_{z_j}\| \Le 1,$ $j=1, \ldots, n$).   
By the discussion following \cite[Proposition~0]{AP1990}, 
$\mathscr M_z$ is jointly subnormal if and only if for every $f \in \mathscr H^2_m(\triangle^{\!n}_{_P}),$
\beqn 
\{c^{-2|\beta|}\|\mathscr M^{\beta}_zf\|^2\}_{\beta \in \mathbb Z^n_+}~ \mbox{is a Hausdorff moment multisequence}.
\eeqn
This combined with \eqref{c-eq} (applied to $c^2$ and $c^{-2}$) and \eqref{product-moment} yields the fact$:$ 
\begin{align}
\label{equivalence-HMS}
& \mbox{ $\mathscr M_z$ is jointly subnormal if and only if for every $f \in \mathscr H^2_m(\triangle^{\!n}_{_P}),$} \notag \\
& \{\|\mathscr M^{\beta}_zf\|^2\}_{\beta \in \mathbb Z^n_+}~ \mbox{is a Hausdorff moment multisequence}.
\end{align}
For $f \in \mathscr H^2_m(\triangle^{\!n}_{_P}),$ write $f = \sum_{\alpha \in \mathbb Z^n_+} \inp{f}{e_\alpha} e_\alpha$ (see Proposition~\ref{on-basis}). By \eqref{powers-orthogonal},  for any $\beta \in \mathbb Z^n_+,$ the family $\{\mathscr M^{\beta}_z e_\alpha\}_{\alpha \in \mathbb Z^n_+}$ is orthogonal in $\mathscr H^2_m(\triangle^{\!n}_{_P}),$ and hence 
\beqn
\|\mathscr M^{\beta}_z f\|^2 &=& \sum_{\alpha \in \mathbb Z^n_+} |\inp{f}{e_\alpha}|^2 \|\mathscr M^{\beta}_z e_\alpha\|^2 \\ &\overset{\eqref{action-power-Mz}}=&  \sum_{\alpha \in \mathbb Z^n_+} |\inp{f}{e_\alpha}|^2   \frac{A_{P, m}(\alpha)}{A_{P, m}(\alpha + \sum_{j=1}^n \beta_j \sum_{k=j}^n \varepsilon_k)}. 
\eeqn
This combined with the fact that the set of Hausdorff moment multisequences forms a closed convex cone in $\mathbb R^{\mathbb Z^n_+}$ (see \cite[p.~130]{BCR1984}) shows  that $\{\|\mathscr M^{\beta}_zf\|^2\}_{\beta \in \mathbb Z^n_+}$ is a Hausdorff moment multisequence for every $f \in \mathscr H^2_m(\triangle^{\!n}_{_P})$ if and only if for every $\alpha \in \mathbb Z^n_+,$ $\{\|\mathscr M^{\beta}_ze_\alpha\|^2\}_{\beta \in \mathbb Z^n_+}$ is a Hausdorff moment multisequence.
This together with \eqref{equivalence-HMS} completes the proof.
\end{proof}

The criterion for the joint subnormality, as provided in Proposition~\ref{joint-s}, takes a simple form if the polynomial $n$-tuple $P$ in question is admissible. 
\begin{corollary} \label{coro-subnormal}
Assume that $P$ is an admissible polynomial $n$-tuple. Then the multiplication $n$-tuple $\mathscr M_z$ on $\mathscr H^2_m(\triangle^{\!n}_{_P})$ is jointly subnormal if and only if for every $\gamma \in \mathbb Z^n_+,$ $\Big\{\displaystyle \prod_{j=1}^n \frac{1}{A_{{P}_j, m_j}((\gamma_j  + \sum_{l=1}^j \beta_l)\varepsilon_j)}\Big\}_{\beta \in \mathbb Z^n_+}$ is a Hausdorff moment multisequence.
\end{corollary}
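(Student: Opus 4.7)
The plan is to deduce this directly from Proposition~\ref{joint-s} by using the product formula for $A_{P,m}$ that holds in the admissible case. Proposition~\ref{joint-s} reduces joint subnormality of $\mathscr{M}_z$ on $\mathscr{H}^2_m(\triangle^{\!n}_{_P})$ to verifying, for every $\gamma \in \mathbb{Z}^n_+$, that the multisequence $\bigl\{1/A_{P, m}(\gamma + \sum_{j=1}^n \beta_j \sum_{k=j}^n \varepsilon_k)\bigr\}_{\beta \in \mathbb{Z}^n_+}$ is a Hausdorff moment multisequence. So the only task is to rewrite the index $\gamma + \sum_{j=1}^n \beta_j \sum_{k=j}^n \varepsilon_k$ and simplify $A_{P,m}$ at this index under the admissibility hypothesis.

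First, I would compute the $i$-th coordinate of $\sum_{j=1}^n \beta_j \sum_{k=j}^n \varepsilon_k$. Since $\varepsilon_k$ contributes to coordinate $i$ precisely when $k = i$, and the inner sum ranges over $k = j, \ldots, n$, the index $j$ contributes $\beta_j$ to coordinate $i$ exactly when $j \leq i$. Hence
\begin{equation*}
\Bigl(\gamma + \sum_{j=1}^n \beta_j \sum_{k=j}^n \varepsilon_k\Bigr)_i = \gamma_i + \sum_{l=1}^i \beta_l, \quad i=1,\ldots,n.
\end{equation*}

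Next, since $P$ is admissible, Lemma~\ref{APM-positive} gives the product formula $A_{P,m}(\alpha) = \prod_{j=1}^n A_{P_j, m_j}(\alpha_j \varepsilon_j)$ for every $\alpha \in \mathbb{Z}^n_+$. Applying this with $\alpha = \gamma + \sum_{j=1}^n \beta_j \sum_{k=j}^n \varepsilon_k$ and using the coordinate computation above yields
\begin{equation*}
A_{P,m}\Bigl(\gamma + \sum_{j=1}^n \beta_j \sum_{k=j}^n \varepsilon_k\Bigr) = \prod_{j=1}^n A_{P_j, m_j}\Bigl(\bigl(\gamma_j + \sum_{l=1}^j \beta_l\bigr)\varepsilon_j\Bigr).
\end{equation*}
Taking reciprocals and feeding this equality into the criterion of Proposition~\ref{joint-s} completes the proof. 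There is no genuine obstacle here: the argument is an index-chasing simplification, and the admissibility hypothesis is used exactly once, to collapse the multi-index coefficient-function into the product over $j$ of the single-variable coefficient-functions.
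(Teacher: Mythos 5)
Your proposal is correct and matches the paper's own argument, which likewise deduces the corollary from Proposition~\ref{joint-s} via the product formula \eqref{A-tilde-A-new}; your explicit coordinate computation of $\gamma + \sum_{j=1}^n \beta_j \sum_{k=j}^n \varepsilon_k$ just spells out what the paper leaves implicit.
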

\begin{proof}
In view of \eqref{A-tilde-A-new}, this is immediate from Proposition~\ref{joint-s}. 
\end{proof}

We now apply Corollary \ref{coro-subnormal} to the case of $n$-dimensional Hartogs triangle. 
\begin{example}[Example~\ref{exm-triangle-2} continued $\cdots$] \label{exm-triangle-3} 
We claim that $\mathscr M_z$ on $\mathscr H^2_m(\triangle^{\!n}_{_{0}})$ is jointly subnormal.
Note first that by the identity
\beqn 
\frac{1}{(1-x)^k} = \sum_{l=0}^\infty \binom{l+k-1}{k-1} x^l, \quad x \in (0, 1), ~k \in \mathbb N,
\eeqn
we may conclude from \eqref{def-P-a} and \eqref{coeff-Pj-mj} that 
\beq 
\label{A-P0-m-alpha-j}
A_{P_{{j, 0}}, m_j}(k\varepsilon_j) = \binom{k + m_j -1}{m_j-1}, \quad k \in \mathbb Z_+, ~j=1, \ldots, n.
\eeq
 Since $P_0$ is admissible, by Corollary~\ref{coro-subnormal}, it suffices to check that for every $\alpha \in \mathbb Z^n_+,$
\beqn
\Big\{\displaystyle \prod_{j=1}^n \frac{1}{A_{{P}_{j, 0}, m_j}((\alpha_j  + \sum_{l=1}^j \beta_l)\varepsilon_j)}\Big\}_{\beta \in \mathbb Z^n_+}. 
\eeqn
is a Hausdorff moment multisequence. 
To see this, note that for $\alpha, \beta \in \mathbb Z^n_+,$
\beqn 
\displaystyle \prod_{j=1}^n \frac{1}{A_{{P}_{j, 0}, m_j}((\alpha_j  + \sum_{l=1}^j \beta_l)\varepsilon_j)} &\overset{\eqref{A-P0-m-alpha-j}}=& \displaystyle\prod_{j=1}^n \frac{1}{\binom{\alpha_j + \sum_{l=1}^j \beta_l + m_j -1}{m_j-1}} \\ 
&=& \prod_{j=1}^n \frac{(m_j-1)!(\alpha_j + \sum_{l=1}^j \beta_l)!}{ 
(\alpha_j + \sum_{l=1}^j \beta_l + m_j -1 )!} \\
&=& \prod_{j=1}^n \prod_{k=1}^{m_j-1} \frac{(m_j-1)!}{\alpha_j + k +  \sum_{l=1}^j \beta_l}. 
\eeqn
If $J = \{j \in \{1, \ldots, n\} : m_j > 1\},$ then for every $\alpha, \beta \in \mathbb Z^n_+,$
\beq
\label{moment-h}
\displaystyle \prod_{j=1}^n \frac{1}{A_{{P}_{j, 0}, m_j}((\alpha_j  + \sum_{l=1}^j \beta_l)\varepsilon_j)} &\overset{\eqref{A-P0-m-alpha-j}}  = \displaystyle \prod_{j \in J} \prod_{k=1}^{m_j-1} \frac{(m_j-1)!}{\alpha_j + k +  \sum_{l=1}^j \beta_l}.
\eeq 
On the other hand, for any positive real number $a$ and $j=1, \ldots, n,$
\beqn
&& \frac{1}{a+\sum_{k=1}^j \beta_k} \\ &=&  \int_{[0, 1]^{n}} t^{\beta} t^{a-1}_1 dt_1 d\delta_{t_1}(t_2) \cdots d\delta_{t_1}(t_j) d\delta_1(t_{j+1}) \cdots d\delta_1(t_n), 
\quad \beta \in \mathbb Z^n_+,
\eeqn
where $\delta_x$ denotes the Dirac delta measure with point mass at $x \in [0, 1].$
This combined with \eqref{product-moment} and \eqref{moment-h} shows that $\mathscr M_z$ on $\mathscr H^2_m(\triangle^{\!n}_{_{0}})$ is jointly subnormal. 
\eof
\end{example}

It is worth mentioning that \eqref{A-tilde-A-new} together with \eqref{A-P0-m-alpha-j} yields the formula$:$
\beq
\label{A-P0-m-alpha}
A_{P_0, m}(\alpha) = \prod_{j=1}^n \binom{\alpha_j + m_j -1}{m_j-1}, \quad \alpha \in \mathbb Z^n_+.
\eeq
In the next proposition, we need the following identity (see \cite[p.~4]{HKZ2000})$:$
\beq
\label{HKZ-id}
\int_{\mathbb D} |w|^{2l} (1- |w|^2)^{k} dw =
\frac{\pi}{(k+1)\binom{l+k+1}{k+1}}, \quad k, l \in \mathbb Z_+. 
\eeq
The following shows that the space $\mathscr H^2(\triangle^{\!n}_{_{0}})$ (see Example~\ref{exm-triangle-2}) provides a befitting candidate for the Hardy space of $\triangle^{\!n}_{_{0}}.$ 
\begin{proposition} 
\label{norm-Hardy}
The following statements are valid$:$
\begin{itemize}
\item[$\mathrm{(i)}$] 
the norm of $f \in \mathscr H^2(\triangle^{\!n}_{_{0}})$ is given by
\begin{equation*}
\|f\|^2=  \sup_{\underset{t_j \in (0, 1)}{j=1, \ldots, n}} \int_{[0, 2\pi]^n}  \!\Big|f\Big(\prod_{j=1}^n t_j e^{i \theta_1}, \prod_{j=2}^n t_j e^{i \theta_2}, \ldots, t_n e^{i \theta_n}\Big)\Big|^2 \prod_{j=1}^n t^{2j-1}_j \frac{d\theta}{(2\pi)^n}, 
\end{equation*}
where $d\theta$ denotes the Lebesgue measure on $[0, 2\pi]^n,$
\item[$\mathrm{(ii)}$] if $m_j \Ge 2$ for every $j=1, \ldots, n,$ then $\mathscr H^2_m(\triangle^{\!n}_{_{0}})$ is embedded isometrically into $L^2(\triangle^{\!n}_{_{0}}, w(z)dz),$ 
where the weight $w(z)$ is given by
\beqn
w(z) = \frac{1}{\pi^n}\Big(\prod_{j=1}^{n-1} (m_j-1)\Big(1- \frac{|z_j|^2}{|z_{j+1}|^2}\Big)^{m_j -2}\Big) (m_n-1)(1-|z_n|^2)^{m_n-2}, \\ z \in \triangle^{\!n}_{_{0}}.
\eeqn
\end{itemize}
\end{proposition}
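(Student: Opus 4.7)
The plan is to compute both identities by expanding $f \in \mathscr H^2_m(\triangle^{\!n}_{_0})$ in the orthonormal basis $\{e_\alpha\}_{\alpha \in \mathbb Z^n_+}$ from Proposition~\ref{on-basis} and evaluating the right hand sides term by term. First I would parameterize the torus-layers of $\triangle^{\!n}_{_0}$ by $z_j = e^{i\theta_j}\prod_{k=j}^n t_k$, i.e.\ $z = \varphi^{-1}(t_1 e^{i\theta_1},\ldots,t_n e^{i\theta_n})$, for $(t,\theta) \in (0,1)^n \times [0,2\pi]^n$. Under this parameterization,
\[
\varphi(z)_j = t_j\, e^{i(\theta_j-\theta_{j+1})}\ (j<n),\qquad \varphi(z)_n = t_n e^{i\theta_n},
\]
and $\prod_{k=2}^n z_k = e^{i(\theta_2+\cdots+\theta_n)}\prod_{k=2}^n t_k^{k-1}$, so \eqref{e-alpha} gives
\[
e_\alpha(z) = \frac{\sqrt{A_{P_0,m}(\alpha)}\,\prod_{j=1}^n t_j^{\alpha_j}}{\prod_{k=2}^n t_k^{k-1}}\,\chi_\alpha(\theta),
\]
where $\chi_\alpha(\theta) = e^{i\alpha_1\theta_1}\prod_{j=2}^{n} e^{i(\alpha_j-\alpha_{j-1}-1)\theta_j}$. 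The map $\alpha \mapsto (\alpha_1,\alpha_2-\alpha_1-1,\ldots,\alpha_n-\alpha_{n-1}-1)$ is injective on $\mathbb Z^n_+$, so the characters $\{\chi_\alpha\}$ are pairwise distinct on $\mathbb T^n$, hence orthonormal in $L^2([0,2\pi]^n, d\theta/(2\pi)^n)$. Multiplying by the weight $\prod_j t_j^{2j-1}$, a short bookkeeping gives the key per-layer identity
\[
\int_{[0,2\pi]^n} e_\alpha \overline{e_\beta}\,\prod_{j=1}^n t_j^{2j-1}\,\frac{d\theta}{(2\pi)^n} = \delta_{\alpha,\beta}\,A_{P_0,m}(\alpha)\prod_{j=1}^n t_j^{2\alpha_j+1}.
\]

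For part (i), $m=\mathbf 1$ and \eqref{A-P0-m-alpha} yields $A_{P_0,\mathbf 1}(\alpha)\equiv 1$. Writing $f = \sum_\alpha c_\alpha e_\alpha$ with $c_\alpha = \inp{f}{e_\alpha}$ and applying Tonelli to the orthogonality above,
\[
I(t) := \int_{[0,2\pi]^n}|f|^2\prod_{j=1}^n t_j^{2j-1}\,\frac{d\theta}{(2\pi)^n} = \sum_{\alpha\in\mathbb Z^n_+}|c_\alpha|^2 \prod_{j=1}^n t_j^{2\alpha_j+1}.
\]
Since each summand is monotone increasing in each $t_j\in(0,1)$, the monotone convergence theorem gives $\sup_{t\in(0,1)^n}I(t) = \sum_\alpha |c_\alpha|^2 = \|f\|^2$, which is the stated formula.

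For part (ii), I would pass to polar coordinates $z_j=r_j e^{i\theta_j}$ so that $dz = \prod_j r_j\,dr_j\,d\theta_j$, and then make the further substitution $r_j = \prod_{k=j}^n t_k$. The identifications $1-|z_j|^2/|z_{j+1}|^2 = 1-t_j^2$ and $1-|z_n|^2 = 1-t_n^2$ convert the weight $w$ into a product in the $t_j$-variables. The Jacobian of $(t_1,\ldots,t_n)\mapsto(r_1,\ldots,r_n)$ is upper triangular with determinant $\prod_{k=2}^n t_k^{k-1}$, and combining with $\prod_j r_j = \prod_k t_k^k$ yields the clean identity $\prod_j r_j\,dr_j = \prod_j t_j^{2j-1}\,dt_j$. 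Invoking the per-layer identity above, Tonelli, and separating the $t_j$-integrals,
\[
\int_{\triangle^{\!n}_{_0}}|f|^2 w(z)\,dz = 2^n\sum_{\alpha}|c_\alpha|^2 A_{P_0,m}(\alpha)\prod_{j=1}^n(m_j-1)\int_0^1 t_j^{2\alpha_j+1}(1-t_j^2)^{m_j-2}\,dt_j.
\]
The hypothesis $m_j\ge 2$ makes each inner integral a Beta integral (via $u=t_j^2$), equal to $\frac{1}{2(m_j-1)\binom{\alpha_j+m_j-1}{m_j-1}}$, and \eqref{A-P0-m-alpha} contributes exactly the reciprocal factor $A_{P_0,m}(\alpha) = \prod_j\binom{\alpha_j+m_j-1}{m_j-1}$. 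All factors telescope, leaving $\sum_\alpha|c_\alpha|^2 = \|f\|^2$.

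The main technical point will be verifying the orthogonality of $\{\chi_\alpha\}$ (i.e.\ injectivity of the exponent map) and the Jacobian computation $\prod_j r_j\,dr_j = \prod_j t_j^{2j-1}\,dt_j$; once these are in hand the remainder is Tonelli, a standard Beta integral, and a direct cancellation using \eqref{A-P0-m-alpha}. The hypothesis $m_j\ge 2$ in (ii) is exactly what is needed to make the Beta integrand integrable at $t_j=1$, and also what makes the weight $w$ a nonnegative locally integrable function on $\triangle^{\!n}_{_0}$.
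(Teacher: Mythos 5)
Your proposal is correct and follows essentially the same route as the paper: expand in the orthonormal basis $\{e_\alpha\}$ of Proposition~\ref{on-basis}, compute the torus-layer integrals, and reduce (ii) to one-dimensional Beta-type integrals that cancel against $A_{P_0,m}(\alpha)$ via \eqref{A-P0-m-alpha} — the paper performs the same computation (only on basis vectors) using the change of variables $w=\varphi(z)$ with the Jacobian \eqref{Jaco} and the cited identity \eqref{HKZ-id}, while you use polar coordinates plus an explicit Beta integral and spell out the per-layer orthogonality and Tonelli steps the paper leaves implicit. The only cosmetic slip is the aside ``i.e.\ $z=\varphi^{-1}(t_1e^{i\theta_1},\ldots,t_ne^{i\theta_n})$'' (whose first coordinate would carry phase $\theta_1+\cdots+\theta_n$ by \eqref{phi-inverse}), but the formulas you actually use are the correct ones for $z_j=e^{i\theta_j}\prod_{k= j}^{n}t_k$, so nothing in the argument is affected.
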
 
\begin{proof} 
(i) 
Note that 
for $\alpha \in \mathbb Z^n_+,$ 
\beqn
&&  \int_{[0, 2\pi]^n}  \Big|e_\alpha \Big(\prod_{j=1}^n t_j e^{i \theta_1}, \prod_{j=2}^n t_j e^{i \theta_2}, \ldots, t_n e^{i \theta_n}\Big)\Big|^2 \prod_{j=1}^n t^{2j-1}_j d\theta \\
&\overset{\eqref{e-alpha} \& \eqref{A-P0-m-alpha}}=&  \int_{[0, 2\pi]^n}   \Big|\varphi \Big(\prod_{j=1}^n t_j e^{i \theta_1}, \prod_{j=2}^n t_j e^{i \theta_2}, \ldots, t_n e^{i \theta_n}\Big)^{\alpha}\Big|^2 \prod_{j=1}^n t_j d\theta \\
&\overset{\eqref{tilde}}=&   \int_{[0, 2\pi]^n}    \prod_{j=1}^n t^{2\alpha_j+1}_j d\theta \\
&=& (2\pi)^n \prod_{j=1}^n t^{2\alpha_j+1}_j.
\eeqn
Now taking supremum over $t_j \in (0, 1),$ $j=1, \ldots, n,$ we get the desired formula in (i) for $f=e_\alpha,$ $\alpha \in \mathbb Z^n_+.$ Since $\{e_\alpha\}_{\alpha \in \mathbb Z^n_+}$ is an orthonormal basis for $\mathscr H^2(\triangle^{\!n}_{_{0}})$ (see Proposition~\ref{on-basis}), the conclusion in (i) follows. 

(ii) Assume that $m_j \Ge 2,$ $j=1, \ldots, n.$
Note that for any $\alpha \in \mathbb Z^n_+,$ by Proposition~\ref{DoH}(ii),
\allowdisplaybreaks 
\beqn
&  &  \int_{\triangle^{\!n}_{_{0}}} |e_\alpha(z)|^2 \prod_{j=1}^{n-1}\Big(1- \frac{|z_j|^2}{|z_{j+1}|^2}\Big)^{m_j -2} (1-|z_n|^2)^{m_n-2} dz\\
&\overset{\eqref{e-alpha}}=& A_{P_0, m}(\alpha) \int_{\triangle^{\!n}_{_{0}}} \frac{|\varphi(z)^{\alpha}|^2}{\prod_{j=2}^n|z_j|^2}   \prod_{j=1}^{n-1}\Big(1- \frac{|z_j|^2}{|z_{j+1}|^2}\Big)^{m_j -2} (1-|z_n|^2)^{m_n-2}dz \\
& \overset{\eqref{Jaco}}= & A_{P_0, m}(\alpha) \int_{\mathbb D^n} |w^{\alpha}|^2   \prod_{j=1}^{n} \Big(1- |w_j|^2\Big)^{m_j -2} dw \\
&\overset{\eqref{A-P0-m-alpha}}=&  \prod_{j=1}^{n} \binom{\alpha_j +m_j-1}{m_j -1} \int_{\mathbb D} |w_j|^{2\alpha_j} \Big(1- |w_j|^2\Big)^{m_j -2} dw_j \\
&\overset{\eqref{HKZ-id}}=& \frac{\pi^n}{\prod_{j=1}^{n} (m_j-1)}.
 \eeqn 
Thus the identity mapping from $\mbox{span}\{e_\alpha : \alpha \in \mathbb Z^n_+\}$ into $L^2(\triangle^{\!n}_{_{0}}, w(z)dz)$ is isometric. The conclusion in (ii) now follows from Proposition~\ref{on-basis}. 
\end{proof}
\begin{remark}
In case of $m_1=1$ and $m_2 =2,$ it is not difficult to see that the norm on $\mathscr H^2_m(\triangle^{\!2}_{_{0}})$ is given by
\beqn
\|f\|^2 = \frac{1}{2\pi^2} \sup_{t \in (0, 1)} \int_{\mathbb D} \int_{[0, 2\pi]}   \big|f\big(t e^{i \theta}w, w\big)|^2t|w|^2 d\theta dw, \quad f \in \mathscr H^2(\triangle^{\!2}_{_{0}}). 
\eeqn
Similarly, one can also deduce the formula for norm  on $\mathscr H^2_m(\triangle^{\!2}_{_{0}})$  when $m_1 = 2$ and $m_2=1.$ 
\hfill $\diamondsuit$
\end{remark}

We refer to the space $\mathscr H^2(\triangle^{\!n}_{_{0}})$ as the  {\it Hardy space of the $n$-dimensional Hartogs triangle $\triangle^{\!n}_{_0}.$} In the case of $n=2,$ this space was constructed independently in \cite[Section~3]{M2021} and \cite[Section~6]{GGLV2021} by entirely different methods.  In case $m_j = 2,$ $j=1, \ldots, n,$ one may refer to  $\mathscr H^2_m(\triangle^{\!n}_{_{0}})$ as the {\it Bergman space of $\triangle^{\!n}_{_{0}}.$}  
If $m_j \Ge 2,$ $j=1, \ldots, n,$ then we refer to $\mathscr H^2_m(\triangle^{\!n}_{_{0}})$  as a {\it weighted Bergman space of $\triangle^{\!n}_{_{0}}$}. 
\begin{remark}
Unlike the case of polydisc, the {\it Bergman kernel} $\mathscr K_{0, \bf 2}$ of $\triangle^{\!n}_{_{0}}$ is not a power of the {\it Szeg$\ddot{\mbox{o}}$ kernel} $\mathscr K_{0, \bf 1}$ of $\triangle^{\!n}_{_{0}}$ (see Example~\ref{exm-triangle-2}), where $\bf k$ denotes the $n$-tuple with all entries equal to $k \in \mathbb Z_+.$ 
In connection with the zeros 
of the Bergman kernel, the following is worth noting$:$
The Bergman kernel $\mathscr K_{0, \bf 2}(\cdot, \cdot)$ of $\triangle^{\!n}_{_{0}}$ (seen as a rational function) has zeroes precisely at $$\{z \in \mathbb C^n : \mbox{for some}~j=2, \ldots, n, ~z_j =0~\mbox{and}~z_i \neq 0, ~1 \Le i \neq j \Le n-1\}.$$ 
Indeed, by Example~\ref{exm-triangle-2}, we have 
\beqn 
 \mathscr K_{0, \bf 2}(z, z) = \frac{1}{(1-|z_n|^{2})}\prod_{j=1}^{n-1} \frac{|z_{j+1}|^2}{\Big(|z_{j+1}|^2-|z_j|^2\Big)^{2}}, \quad z \in \triangle^{\!n}_{0}. 
\eeqn
(cf. \cite[Section~3.1]{BFS1999}).
\hfill $\diamondsuit$
\end{remark}

We conclude this section with an analog of von Neumann's inequality for the $n$-dimensional Hartogs triangle (cf. \cite[Corollary~5.4]{P1999}). This generalizes \cite[Theorem~1.2]{CJP2022} (the case of $n=2$). 
\begin{theorem} \label{vn-H}  
If a commuting $n$-tuple $T=(T_1 , \ldots, T_n)$ is a $\triangle^{\!n}_{_0}$-contraction on $H$ such that $\sigma(T) \subset \triangle^{\!n}_{_{0}},$ 
then  
\beqn
\|\phi(T)\| \Le\|\phi\|_{\infty, \triangle^{\!n}_{_{0}}}, \quad \phi \in \mathscr H^{\infty}(\triangle^{\!n}_{_{0}}).
\eeqn
\end{theorem}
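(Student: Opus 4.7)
The plan is to reduce von Neumann's inequality to a reproducing-kernel positivity statement for the Hardy space $\mathscr H^2(\triangle^{\!n}_{_0})$ and then transfer that positivity to $T$ through the hereditary functional calculus, using $\sigma(T)\subset\triangle^{\!n}_{_0}$ to secure convergence. First I would observe that, since $\sigma(T)$ is a compact subset of the open domain $\triangle^{\!n}_{_0}$ and $\phi\in\mathscr H^\infty(\triangle^{\!n}_{_0})$ is holomorphic there, the Taylor analytic functional calculus produces $\phi(T)\in\mathcal B(H).$ By homogeneity it suffices to assume $\|\phi\|_{\infty,\triangle^{\!n}_{_0}}\Le 1$ and prove $\|\phi(T)\|\Le 1.$

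Next I would establish the kernel positivity
\[
(1-\phi(z)\overline{\phi(w)})\,\mathscr K_{0,\mathbf 1}(z,w)\succeq 0\quad\text{on }\triangle^{\!n}_{_0}\times\triangle^{\!n}_{_0},
\]
equivalently, that $\mathscr M_\phi$ is a contractive multiplier of $\mathscr H^2(\triangle^{\!n}_{_0}).$ The boundary-integral formula of Proposition~\ref{norm-Hardy}(i) makes this transparent: the supremum-over-$t$ integral for $\|\phi f\|^2$ is dominated by $\|\phi\|_\infty^2\|f\|^2$ because $|\phi|\Le 1$ pointwise inside $\triangle^{\!n}_{_0}.$ The same conclusion can alternatively be drawn from the joint subnormality of $\mathscr M_z$ established in Example~\ref{exm-triangle-3}, whose minimal normal extension has joint spectrum in $\overline{\triangle^{\!n}_{_0}}$ and therefore supplies a contractive $L^\infty$-functional calculus that restricts to $\mathscr H^2(\triangle^{\!n}_{_0}).$

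The third step is to transfer the kernel positivity to $T$ through the hereditary functional calculus. Moore's theorem applied to the positive kernel above produces a factorization $(1-\phi(z)\overline{\phi(w)})\,\mathscr K_{0,\mathbf 1}(z,w)=\sum_i g_i(z)\overline{g_i(w)},$ while Example~\ref{exm-triangle-2}(i) exhibits $1/\mathscr K_{0,\mathbf 1}$ as an explicit hereditary polynomial. Substituting $z\rightsquigarrow T$ and $\overline{w}\rightsquigarrow T^*$ term by term, the spectral-radius control coming from $\sigma(T)\subset\triangle^{\!n}_{_0}$ guarantees absolute convergence in operator norm, and combining the resulting operator identity with the standing hypothesis $\tfrac{1}{\mathscr K_{0,\mathbf 1}}(T,T^*)\Ge 0$ yields $I-\phi(T)^*\phi(T)\Ge 0,$ which is the desired inequality.

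\textbf{Main obstacle.} The crux of the argument is the last step. The Moore decomposition is an infinite sum of positive rank-one contributions, while the hereditary functional calculus is a genuine homomorphism only on hereditary polynomials. To close the gap I would first truncate to polynomial $\phi,$ extract the operator identity algebraically so that the positive factor $\tfrac{1}{\mathscr K_{0,\mathbf 1}}(T,T^*)$ is sandwiched in the correct position despite the non-commutativity of $T$ and $T^*,$ and then pass to the limit by approximating $\phi\in\mathscr H^\infty(\triangle^{\!n}_{_0})$ uniformly on a neighborhood of $\sigma(T)$ by polynomials of controlled sup-norm (via Ces\`aro means on $\varphi(\triangle^{\!n}_{_0})=\mathbb D\times\mathbb D_*^{n-1}$). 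This is the mechanism underlying \cite[Corollary~5.4]{P1999} and the $n=2$ argument of \cite[Theorem~1.2]{CJP2022}; the new ingredient for general $n$ is an induction exploiting the factorization $\triangle^{\!n}_{_0}\simeq\mathbb D\times\mathbb D_*^{n-1}$ and the product structure of $\mathscr K_{0,\mathbf 1}$ afforded by the admissibility of $P_0.$
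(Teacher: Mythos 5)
Your proposal is correct and follows essentially the same route as the paper: the paper reduces the theorem to the fact that the multiplier algebra of $\mathscr H^2(\triangle^{\!n}_{_0})$ is isometrically equal to $\mathscr H^{\infty}(\triangle^{\!n}_{_0})$ (via Theorem~\ref{mult-h-infty} and the norm formula of Proposition~\ref{norm-Hardy}(i)) and then invokes the transfer principle of \cite[Theorem~2.1]{CJP2022}, which is exactly the hereditary-functional-calculus argument you sketch in your third step and ``main obstacle'' paragraph. The only difference is that you unpack the internals of that cited transfer principle rather than quoting it as a black box.
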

\begin{proof}
In view of \cite[Theorem 2.1]{CJP2022}, it suffices to check that the multiplier algebra of $\mathscr H^2(\triangle^{\!n}_{_{0}})$ is equal to $\mathscr H^\infty(\triangle^{\!n}_{_P})$ (with equality of norms). This follows from Theorem~\ref{mult-h-infty} and Proposition~\ref{norm-Hardy}(i). 
\end{proof}

Any $\triangle^{\!n}_{_0}$-contraction with Taylor spectrum contained in $\triangle^{\!n}_{_{0}}$ is a contractive $n$-tuple of special kind. 
\begin{corollary} \label{coro-v-N-i}
Assume that $T=(T_1 , \ldots, T_n)$ is a $\triangle^{\!n}_{_0}$-contraction on $H$ such that $\sigma(T) \subset \triangle^{\!n}_{_{0}}.$ 
Then the operators $T_2, \ldots, T_n$ are invertible and    
\beq
\label{vn-H-ineq-new-new}
T^*_jT_j \Le T^*_{j+1}T_{j+1} \Le I, \quad j=1, \ldots, n-1. 
\eeq
In particular, $r(T_j) \Le r(T_{j+1}) < 1$ for every $ j=1, \ldots, n-1.$ 
\end{corollary}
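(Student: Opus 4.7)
The proof has two independent ingredients: the polynomial spectral mapping theorem for the Taylor spectrum (giving $\sigma(T_j)=\pi_j(\sigma(T))$, where $\pi_j$ is the $j$-th coordinate projection; see \cite[Corollary~3.5]{C1988}) and Theorem~\ref{vn-H} applied to a few simple test functions in $\mathscr H^\infty(\triangle^{\!n}_{_0})$.

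First I would establish invertibility of $T_2,\ldots,T_n$: since $\sigma(T)\subset \triangle^{\!n}_{_0}\subseteq \mathbb C\times \mathbb C^{n-1}_*$, for $j\ge 2$ we have $0\notin \pi_j(\triangle^{\!n}_{_0})$, so $0\notin \sigma(T_j)$ and hence $T_j$ is invertible. Next, for $j=1,\ldots,n-1$, consider the rational function $\phi_j(z)=z_j/z_{j+1}$. Since $z_{j+1}$ does not vanish on $\triangle^{\!n}_{_0}$, $\phi_j$ is holomorphic on $\triangle^{\!n}_{_0}$, and by the defining inequalities $|z_j|<|z_{j+1}|$, we have $|\phi_j|<1$ on $\triangle^{\!n}_{_0}$. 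Thus $\phi_j\in \mathscr H^\infty(\triangle^{\!n}_{_0})$ with $\|\phi_j\|_{\infty,\triangle^{\!n}_{_0}}\le 1$. Multiplicativity of the holomorphic functional calculus on commuting tuples, combined with the invertibility of $T_{j+1}$, yields $\phi_j(T)=T_jT_{j+1}^{-1}$. Theorem~\ref{vn-H} therefore gives $\|T_jT_{j+1}^{-1}\|\le 1$, which, after sandwiching by $T_{j+1}^*$ and $T_{j+1}$, is equivalent to $T_j^*T_j\le T_{j+1}^*T_{j+1}$. Applying Theorem~\ref{vn-H} to $\phi(z)=z_n$, which satisfies $|\phi|<1$ on $\triangle^{\!n}_{_0}$, gives $\|T_n\|\le 1$, i.e., $T_n^*T_n\le I$. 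Chaining these inequalities produces
\[
T_1^*T_1\le T_2^*T_2\le \cdots \le T_n^*T_n\le I,
\]
which is in particular \eqref{vn-H-ineq-new-new}.

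For the spectral radii, I would again invoke $\sigma(T_j)=\pi_j(\sigma(T))$. Compactness of $\sigma(T)$ supplies some $w\in \sigma(T)$ with $|w_j|=r(T_j)$, and the inclusion $\sigma(T)\subset \triangle^{\!n}_{_0}$ gives $|w_j|<|w_{j+1}|\le r(T_{j+1})$; analogously, some $w'\in \sigma(T)$ gives $r(T_n)=|w'_n|<1$. Together these yield $r(T_j)<r(T_{j+1})\le r(T_n)<1$. I do not foresee any substantive obstacle; the argument is essentially a packaging of invertibility (from the spectral hypothesis) with Theorem~\ref{vn-H} applied to the coordinate ratios $z_j/z_{j+1}$ and to $z_n$.
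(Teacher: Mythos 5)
Your proposal is correct and follows essentially the same route as the paper: invertibility of $T_2,\ldots,T_n$ from the projection property of the Taylor spectrum, then Theorem~\ref{vn-H} applied to $\phi_j(z)=z_j/z_{j+1}$ and $\phi_n(z)=z_n$ to get \eqref{vn-H-ineq-new-new}. The only (harmless) divergence is in the spectral-radius statement, where the paper iterates the operator inequality to $T_j^{*k}T_j^k\Le T_{j+1}^{*k}T_{j+1}^k\Le I$ and invokes the spectral radius formula, while you read $r(T_j)<r(T_{j+1})<1$ directly off $\sigma(T_j)=\pi_j(\sigma(T))$ and compactness — which in fact yields the slightly stronger strict inequality.
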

\begin{proof} Since $\sigma(T) \subset \triangle^{\!n}_{_{0}},$ by the projection property (see \cite[Theorem~4.9]{C1988}), the spectra of $T_2, \ldots, T_n$ are contained in $\mathbb D_*.$ In particular, $T_2, \ldots, T_n$ are invertible operators with spectral radii less than $1.$ 
We may now apply Theorem~\ref{vn-H} to the functions $\phi_1, \ldots, \phi_n \in \mathscr H^{\infty}(\triangle^{\!n}_{_{0}})$ given by 
\beqn
\phi_j(z)=\begin{cases} \frac{z_j}{z_{j+1}}, & \mbox{if}~j=1, \ldots, n-1, \\
z_n & \mbox{if~}j=n,
\end{cases}
\eeqn
to obtain \eqref{vn-H-ineq-new-new}. Since $T$ is a commuting $n$-tuple, an induction on $k \Ge 1$ shows that 
\beqn
T^{*k}_jT^k_j \Le T^{*k}_{j+1}T^k_{j+1} \Le I, \quad j=1, \ldots, n-1.
\eeqn
This  
together with the spectral radius formula (see \cite[Theorem~2.2.10]{Si2015}) completes the proof. 
\end{proof}

\section{Hilbert spaces of polydiscs associated with admissible tuples} \label{S8}

One of the essential characteristics of the Hardy space $\mathscr H^2(\triangle^{\!n}_{_{0}})$ of $\triangle^{\!n}_{_0}$ is its  intimate relation with its polydisc counter-part. We make this precise in this section (see Proposition~\ref{Mz-tilde-Mz}). 

Consider the polydisc $$\mathbb D^n_{_{\widetilde{P}}}: = \mathbb D_{_{\widetilde{P}_1}} \times  \cdots \times \mathbb D_{_{\widetilde{P}_n}},$$ 
(see \eqref{Q-disc-is-disc}) and the 
positive semi-definite kernel $\widetilde{\mathscr K}_{{P, m}} : \mathbb D^n_{_{\widetilde{P}}} \times \mathbb D^n_{_{\widetilde{P}}} \rar \mathbb C$ given~by 
\beq \label{kernel-tilde-decom}
\widetilde{\mathscr K}_{{P, m}}(z, w) = \prod_{j=1}^n \frac{1}{(1-\widetilde{P}_j(z_j \overline{w}_j))^{m_j}}, \quad z, w \in \mathbb D^n_{_{\widetilde{P}}}.
\eeq
Applying \eqref{exp-exp} and simplifying the expression above, we obtain   
\beq \label{tilde-kernel-exp}
\widetilde{\mathscr K}_{{P, m}}(z, w) = \sum_{\alpha \in \mathbb Z^n_+} \prod_{j=1}^n A_{\widetilde{P}_j, m_j}(\alpha_j) z^{\alpha}\overline{w}^\alpha, 
\eeq 
where the series above converges compactly on $\mathbb D^n_{_{\widetilde{P}}} \times \mathbb D^n_{_{\widetilde{P}}}.$
Let $\mathscr H^2_{m}( \mathbb D^n_{_{\widetilde{P}}})$ be the reproducing kernel Hilbert space associated with the kernel $\widetilde{\mathscr K}_{{P, m}}.$ 

\begin{proposition} \label{onb-polydisc}
For $\alpha \in \mathbb Z^n_+, $ define $f_\alpha : \mathbb D^n_{_{\widetilde{P}}} \rar \mathbb C$ by 
\beq \label{f-alpha-new}
f_\alpha(z)= \prod_{j=1}^n \sqrt{A_{\widetilde{P}_j, m_j}(\alpha_j)}\, z^{\alpha}, \quad z \in \mathbb D^n_{_{\widetilde{P}}}.
\eeq
Then $\{f_{\alpha}\}_{\alpha \in \mathbb Z^n_+}$ forms an orthonormal basis for $\mathscr H^2_{m}( \mathbb D^n_{_{\widetilde{P}}}).$ Moreover,
$z_k$ is a multiplier of $\mathscr H^2_{m}( \mathbb D^n_{_{\widetilde{P}}})$ and  
\beq \label{action-tilde-Mz}
z_k f_\alpha  = \frac{\sqrt{A_{\widetilde{P}_k, m_k}(\alpha_k)}}{ \sqrt{A_{\widetilde{P}_k, m_k}(\alpha_k + 1)}} f_{\alpha + \varepsilon_k}, \quad \alpha \in \mathbb Z^n_+, ~k=1, \ldots, n.
\eeq
\end{proposition}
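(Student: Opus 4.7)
The proposal is to mirror the strategy of Proposition~\ref{on-basis}, adapted to the polydisc setting, and then read off the multiplier action directly from the monomial form of $f_\alpha$.

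First I would observe that by \eqref{f-alpha-new} and \eqref{tilde-kernel-exp}, the kernel factorizes as
\begin{equation*}
\widetilde{\mathscr K}_{P,m}(z,w) \;=\; \sum_{\alpha \in \mathbb Z^n_+} f_\alpha(z)\,\overline{f_\alpha(w)},
\end{equation*}
with compact convergence on $\mathbb D^n_{_{\widetilde P}}\times\mathbb D^n_{_{\widetilde P}}$. By the Papadakis theorem (as cited in the proof of Proposition~\ref{on-basis}), this forces $f_\alpha\in\mathscr H^2_m(\mathbb D^n_{_{\widetilde P}})$ and makes $\{f_\alpha\}_{\alpha\in\mathbb Z^n_+}$ a Parseval frame. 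To upgrade from Parseval frame to orthonormal basis, I would use the same identity-of-power-series trick as in Proposition~\ref{on-basis}: expanding the Parseval reproduction $f_\alpha=\sum_{\beta}\langle f_\alpha,f_\beta\rangle f_\beta$ and dividing by $\prod_j\sqrt{A_{\widetilde P_j,m_j}(\beta_j)}$ yields a power series in $z$ that vanishes identically on the polydisc, so every coefficient must be zero. This gives $\|f_\alpha\|=1$ and $\langle f_\alpha,f_\beta\rangle=0$ for $\alpha\neq\beta$.

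For the second and third assertions, the action of $z_k$ can be computed straight from \eqref{f-alpha-new}:
\begin{equation*}
z_k\,f_\alpha(z) \;=\; \prod_{j=1}^n \sqrt{A_{\widetilde P_j,m_j}(\alpha_j)}\; z^{\alpha+\varepsilon_k},
\end{equation*}
and multiplying and dividing the $k$-th factor by $\sqrt{A_{\widetilde P_k,m_k}(\alpha_k+1)}$ reassembles this as the right-hand side of \eqref{action-tilde-Mz}. It then remains to verify that this defines a bounded operator. Since $\widetilde P_k(w)=a_k w+\sum_{r\ge 2}c_r w^r$ is a one-variable polynomial with nonnegative coefficients and $a_k>0$, Lemma~\ref{Spott}(i) and (iii) (applied with $Q=\widetilde P_k$ and $q_{\varepsilon_k}$ replaced by $a_k$) give
\begin{equation*}
A_{\widetilde P_k,m_k}(\alpha_k+1) \;\ge\; a_k\,A_{\widetilde P_k,m_k}(\alpha_k), \qquad \alpha_k\in\mathbb Z_+,
\end{equation*}
so the weights in \eqref{action-tilde-Mz} are bounded above by $1/\sqrt{a_k}$. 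Because $\{f_{\alpha+\varepsilon_k}\}_{\alpha\in\mathbb Z^n_+}$ is an orthogonal subfamily of the orthonormal basis, this uniform bound combined with the Parseval identity shows that the linear extension of $f\mapsto z_kf$ is bounded on $\mathscr H^2_m(\mathbb D^n_{_{\widetilde P}})$, hence $z_k$ is a multiplier.

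There is essentially no obstacle here: the only place needing care is the identity-of-power-series step that promotes a Parseval frame of monomial type to an orthonormal basis, and that argument is already laid out in Proposition~\ref{on-basis} and transfers verbatim since $\{z^\alpha\}$ are linearly independent on the open polydisc $\mathbb D^n_{_{\widetilde P}}$.
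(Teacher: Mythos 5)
Your proposal is correct and follows essentially the same route as the paper: the paper's (outline of a) proof likewise invokes the Papadakis/Parseval-frame argument from Proposition~\ref{on-basis} for the orthonormal basis claim, computes $z_kf_\alpha$ directly from \eqref{f-alpha-new}, and bounds the weights by $1/\sqrt{\partial_k P_k(0)}$ via Lemma~\ref{Spott}(i) applied to $Q=\widetilde{P}_k$. You have merely filled in the details that the paper delegates to Propositions~\ref{on-basis} and \ref{bddness}.
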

\begin{proof}[Outline of the proof] 
As in the proof of Proposition~\ref{on-basis}, one may deduce the first part from \cite[Proposition~2.8, Theorem 2.10 and Exercise~3.7]{PR2016}.
The verification of the remaining part is similar to that of Proposition~\ref{bddness}. Note that for any $\alpha \in \mathbb Z^n_+$ and $k=1, \ldots, n,$ 
\beqn
z_k f_\alpha  = \frac{\prod_{j=1}^n \sqrt{A_{\widetilde{P}_j, m_j}(\alpha_j)}}{\prod_{j=1}^n \sqrt{A_{\widetilde{P}_j, m_j}((\alpha + \varepsilon_k)_j)}} f_{\alpha + \varepsilon_k} = \frac{\sqrt{A_{\widetilde{P}_k, m_k}(\alpha_k)}}{ \sqrt{A_{\widetilde{P}_k, m_k}(\alpha_k + 1)}} f_{\alpha + \varepsilon_k},
\eeqn
and hence we may infer from Lemma~\ref{Spott}(i) (applied to $Q=\widetilde{P}_k$) that 
\beq \label{bdd-single-multi}
\|z_k f_\alpha\|^2  =  \frac{A_{\widetilde{P}_k, m_k}(\alpha_k)}{ A_{\widetilde{P}_k, m_k}(\alpha_k + 1)} \Le \frac{1}{\partial_k{P}_k(0)}.
\eeq
This completes the proof. 
\end{proof}

For $k=1, \ldots, n,$ let $\widetilde{\mathscr M}_{z_k}$ denote the operator of multiplication by the coordinate function $z_k$ on $\mathscr H^2_{m}( \mathbb D^n_{_{\widetilde{P}}}).$ Combining Proposition~\ref{onb-polydisc} with \eqref{bdd-single-multi} yields the following estimate$:$
$$
\Big\|\widetilde{\mathscr M}_{z_k}\Big\|\Le \frac{1}{\sqrt{\partial_k{P}_k(0)}}, \quad k=1, \ldots, n.$$
Unlike the case of $\mathscr H^2(\triangle^{\!n}_{_{P}})$ (see Proposition~\ref{not-F}(i)),  the multiplication $n$-tuple $\widetilde{\mathscr M}_z$ on $\mathscr H^2_{m}( \mathbb D^n_{_{\widetilde{P}}})$ is always doubly commuting.
\begin{corollary} \label{coro-d-comm}
The multiplication $n$-tuple $\widetilde{\mathscr M}_z=(\widetilde{\mathscr M}_{z_1}, \ldots, \widetilde{\mathscr M}_{z_n})$ on the space $\mathscr H^2_{m}( \mathbb D^n_{_{\widetilde{P}}})$ is  doubly commuting.
\end{corollary}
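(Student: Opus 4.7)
The plan is to verify $[\widetilde{\mathscr M}_{z_j}^*, \widetilde{\mathscr M}_{z_k}] = 0$ for every pair $1 \Le j \neq k \Le n$ by a direct computation on the orthonormal basis $\{f_\alpha\}_{\alpha \in \mathbb Z^n_+}$ from Proposition~\ref{onb-polydisc}. The essential observation, already visible in \eqref{action-tilde-Mz}, is that the weight of $\widetilde{\mathscr M}_{z_k}$ depends only on the single coordinate $\alpha_k$: setting $c_k(t) := \sqrt{A_{\widetilde{P}_k,m_k}(t)}/\sqrt{A_{\widetilde{P}_k,m_k}(t+1)}$ for $t \in \mathbb Z_+$, one has $\widetilde{\mathscr M}_{z_k} f_\alpha = c_k(\alpha_k)\, f_{\alpha + \varepsilon_k}$.

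First, I would derive the adjoint formula $\widetilde{\mathscr M}_{z_k}^* f_\alpha = c_k(\alpha_k - 1)\, f_{\alpha - \varepsilon_k}$ when $\alpha_k \Ge 1$ and $\widetilde{\mathscr M}_{z_k}^* f_\alpha = 0$ when $\alpha_k = 0$, exactly as in the argument leading to \eqref{action-adjoint}. Then, for fixed $j \neq k$ and arbitrary $\alpha \in \mathbb Z^n_+$, I would compute both $\widetilde{\mathscr M}_{z_k}^* \widetilde{\mathscr M}_{z_j} f_\alpha$ and $\widetilde{\mathscr M}_{z_j} \widetilde{\mathscr M}_{z_k}^* f_\alpha$. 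The first is $c_j(\alpha_j)\, c_k((\alpha+\varepsilon_j)_k - 1)\, f_{\alpha + \varepsilon_j - \varepsilon_k}$ when $\alpha_k \Ge 1$ and zero otherwise; the second is $c_k(\alpha_k - 1)\, c_j((\alpha-\varepsilon_k)_j)\, f_{\alpha - \varepsilon_k + \varepsilon_j}$ when $\alpha_k \Ge 1$ and zero otherwise. Since $j \neq k$ forces $(\alpha+\varepsilon_j)_k = \alpha_k$ and $(\alpha-\varepsilon_k)_j = \alpha_j$, the two expressions agree on the basis, and the commutator vanishes by linearity and boundedness.

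There is no real obstacle here; the work is essentially index bookkeeping. Conceptually, the statement merely reflects the tensor-product factorization $\mathscr H^2_m(\mathbb D^n_{\widetilde{P}}) \cong \bigotimes_{j=1}^n H_j$, where $H_j$ is the reproducing kernel Hilbert space on $\mathbb D_{\widetilde{P}_j}$ associated with the kernel $(z_j,w_j) \mapsto (1-\widetilde{P}_j(z_j\overline{w}_j))^{-m_j}$, a factorization forced by \eqref{kernel-tilde-decom}. Under this identification, $\widetilde{\mathscr M}_{z_j}$ becomes $I \otimes \cdots \otimes M_{z_j} \otimes \cdots \otimes I$, from which doubly commutativity is automatic. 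Since the tensor picture is not set up explicitly in the excerpt, I would present the direct basis calculation outlined above, with the remark that the weights separating across coordinates is the algebraic shadow of this tensor structure.
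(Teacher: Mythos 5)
Your proposal is correct and follows essentially the same route as the paper: the paper likewise derives the adjoint formula for $\widetilde{\mathscr M}^*_{z_k}$ on the basis $\{f_\alpha\}$ from \eqref{action-tilde-Mz} and verifies that the commutators $[\widetilde{\mathscr M}_{z_j},\widetilde{\mathscr M}^*_{z_k}]$ annihilate each $f_\alpha$ for $j\neq k$, concluding by density. Your closing remark about the tensor-product factorization is a nice conceptual gloss (the paper does set up exactly that decomposition later, in the proof of Proposition~\ref{prop-spectrum}), but it is not needed here.
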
 
\begin{proof} Clearly, $\widetilde{\mathscr M}_z$ is a commuting $n$-tuple. 
For $k=1, \ldots, n,$ let $\widetilde{\mathscr M}^*_{z_k}$ denote the Hilbert space adjoint of $\widetilde{\mathscr M}_{z_k}.$
It is easy to see from \eqref{action-tilde-Mz} that for any $\alpha \in \mathbb Z^n_+$ and $k=1, \ldots, n,$
\beqn
\widetilde{\mathscr M}^*_{z_k}  f_\alpha  = \begin{cases} 
0 & \mbox{if}~\alpha_k = 0, \\
\frac{\sqrt{A_{\widetilde{P}_k, m_k}(\alpha_k-1)}}{ \sqrt{A_{\widetilde{P}_k, m_k}(\alpha_k)}} f_{\alpha - \varepsilon_k}, & \mbox{otherwise}.  
\end{cases}
\eeqn
It is now easy to see that 
\beqn
\Big(\widetilde{\mathscr M}_{z_j}\widetilde{\mathscr M}^*_{z_k} - \widetilde{\mathscr M}^*_{z_k}\widetilde{\mathscr M}_{z_j} \Big)f_\alpha =0, \quad \alpha \in \mathbb Z^n_+, ~1 \Le j \neq k \Le n.
\eeqn
The desired conclusion now follows from the fact that $\{f_{\alpha}\}_{\alpha \in \mathbb Z^n_+}$ forms an orthonormal basis for $\mathscr H^2_{m}( \mathbb D^n_{_{\widetilde{P}}})$ (see Proposition~\ref{onb-polydisc}). 
\end{proof}

For an admissible polynomial $n$-tuple $P,$ the relation between $\mathscr H^2_m(\triangle^{\!n}_{_P})$  and $\mathscr H^2_{m}( \mathbb D^n_{_{\widetilde{P}}})$ and the respective multiplication $n$-tuples can be made precise. 
\begin{proposition} \label{Mz-tilde-Mz}
Assume that $P$ is an admissible polynomial $n$-tuple.  
Then the linear transformation $\Psi : \mathscr H^2_m(\triangle^{\!n}_{_P}) \rar \mathscr H^2_{m}( \mathbb D^n_{_{\widetilde{P}}})$ given by
\beqn
\Psi(f) = J_{_{\varphi^{-1}}} \cdot f \circ \varphi^{-1}, \quad f \in \mathscr H^2_m(\triangle^{\!n}_{_P}),
\eeqn
defines a unitary transformation. Moreover, if $\mathscr M_z$ and $\widetilde{\mathscr M}_{z}$ are the multiplication $n$-tuples on $\mathscr H^2_m(\triangle^{\!n}_{_P})$ and $\mathscr H^2_{m}( \mathbb D^n_{_{\widetilde{P}}}),$ respectively, then   
\beqn
\Psi \mathscr M_{z_j} = \Big(\prod_{k=j}^n \widetilde{\mathscr M}_{z_k}\Big) \Psi, \quad j=1, \ldots, n. 
\eeqn 
\end{proposition}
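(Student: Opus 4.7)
The plan is to reduce everything to the two orthonormal bases already in hand: $\{e_\alpha\}_{\alpha\in\mathbb Z^n_+}$ for $\mathscr H^2_m(\triangle^{\!n}_{_P})$ from Proposition~\ref{on-basis}, and $\{f_\alpha\}_{\alpha\in\mathbb Z^n_+}$ for $\mathscr H^2_{m}(\mathbb D^n_{_{\widetilde{P}}})$ from Proposition~\ref{onb-polydisc}. Specifically, I would first show that $\Psi e_\alpha = f_\alpha$ for every $\alpha\in\mathbb Z^n_+$, which automatically extends $\Psi$ by linearity and continuity to a unitary of $\mathscr H^2_m(\triangle^{\!n}_{_P})$ onto $\mathscr H^2_m(\mathbb D^n_{_{\widetilde{P}}})$.

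For the computation of $\Psi e_\alpha$, I would substitute the explicit expression \eqref{e-alpha} into $\Psi(f)(w)=J_{_{\varphi^{-1}}}(w)\,(f\circ\varphi^{-1})(w)$. Using \eqref{phi-inverse}, the denominator transforms as
$$\prod_{j=2}^n (\varphi^{-1}(w))_j = \prod_{j=2}^n \prod_{k=j}^n w_k = \prod_{k=2}^n w_k^{k-1},$$
which by \eqref{Jaco} is exactly $J_{_{\varphi^{-1}}}(w)$; hence it cancels with the Jacobian factor and
$$\Psi e_\alpha(w) = \sqrt{A_{P,m}(\alpha)}\,w^\alpha.$$
Next I would invoke admissibility: by \eqref{P-admissible-rmk} each $P_j$ depends only on $z_j$ and coincides there with $\widetilde{P}_j$, so comparing \eqref{coeff-Pj-mj} with \eqref{exp-exp} gives $A_{P_j,m_j}(k\varepsilon_j)=A_{\widetilde P_j,m_j}(k)$. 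Combined with \eqref{A-tilde-A-new}, this yields the key factorization
$$A_{P,m}(\alpha)=\prod_{j=1}^n A_{\widetilde P_j,m_j}(\alpha_j),$$
which together with \eqref{f-alpha-new} shows $\Psi e_\alpha=f_\alpha$. Unitarity follows.

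For the intertwining identity, I would verify it on the basis $\{e_\alpha\}$. By \eqref{action-basis},
$$\Psi\mathscr M_{z_j} e_\alpha = \frac{\sqrt{A_{P,m}(\alpha)}}{\sqrt{A_{P,m}(\alpha+\sum_{k=j}^n\varepsilon_k)}}\,f_{\alpha+\sum_{k=j}^n\varepsilon_k},$$
while iterating \eqref{action-tilde-Mz} for $k=n,n-1,\dots,j$ gives
$$\Bigl(\prod_{k=j}^n \widetilde{\mathscr M}_{z_k}\Bigr)\Psi e_\alpha = \prod_{k=j}^n\frac{\sqrt{A_{\widetilde P_k,m_k}(\alpha_k)}}{\sqrt{A_{\widetilde P_k,m_k}(\alpha_k+1)}}\,f_{\alpha+\sum_{k=j}^n\varepsilon_k}.$$
The factorization established above makes the two scalars equal: the factors with $k<j$ telescope to $1$, while the factors with $k\ge j$ coincide termwise. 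Density of the linear span of $\{e_\alpha\}$ then completes the argument.

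The main obstacle I anticipate is purely bookkeeping: first, correctly identifying the product $\prod_{j=2}^n(\varphi^{-1}(w))_j$ with $J_{_{\varphi^{-1}}}(w)$ so that the definition of $\Psi$ produces an honest polynomial $w^\alpha$ (without this cancellation, the image would not even lie in $\mathscr H^2_m(\mathbb D^n_{_{\widetilde P}})$); and second, properly exploiting admissibility to turn the $n$-variable coefficient function $A_{P,m}$ into a product of single-variable coefficient functions $A_{\widetilde P_j,m_j}$, since without admissibility \eqref{A-tilde-A-new} fails and the proposition itself does not hold in that generality.
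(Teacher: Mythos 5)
Your proposal is correct and follows essentially the same route as the paper: the paper likewise verifies $\Psi(e_\alpha)=f_\alpha$ via the Jacobian cancellation $\prod_{j=2}^n(\varphi^{-1}(w))_j=J_{_{\varphi^{-1}}}(w)$, uses admissibility to obtain the factorization $A_{P,m}(\alpha)=\prod_{j=1}^n A_{\widetilde P_j,m_j}(\alpha_j)$, and checks the intertwining relation on the basis using \eqref{action-basis} and \eqref{action-tilde-Mz}. The only cosmetic difference is that the paper first introduces an auxiliary map $\Phi:f_\alpha\mapsto \bigl(\prod_j\sqrt{A_{\widetilde P_j,m_j}(\alpha_j)}/\sqrt{A_{P,m}(\alpha)}\bigr)e_\alpha$ and identifies $\Psi$ with $\Phi^{-1}$, whereas you work with $\Psi$ directly.
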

\begin{proof} 
Let $f_\alpha,$ $\alpha \in \mathbb Z^n_+,$ be as in \eqref{f-alpha-new}.  
Define
\beqn 
\Phi(f_\alpha) = \frac{\prod_{j=1}^n\sqrt{A_{\widetilde{P}_j, m_j}(\alpha_j)}}{\sqrt{A_{P, m}(\alpha)} } \, e_\alpha, \quad \alpha \in \mathbb Z^n_+,
\eeqn
which extends linearly to the linear span of $\{f_\alpha\}_{\alpha \in \mathbb Z^n_+}.$ By Propositions~\ref{on-basis} and \ref{onb-polydisc}, for any finite subset $F$ of $\mathbb Z^n_+$ and $\{a_\alpha : \alpha \in F\} \subset \mathbb C,$
\beq  \label{equality-holds}
\Big\|\Phi \Big(\sum_{\alpha \in F} a_\alpha f_\alpha \Big)\Big\|^2 \notag &=& \sum_{\alpha \in F} |a_\alpha|^2\, \frac{\prod_{j=1}^nA_{\widetilde{P}_j, m_j}(\alpha_j)}{A_{P, m}(\alpha)} \\ & \overset{\eqref{rho-alpha-f}}  \Le & \sum_{\alpha \in F} |a_\alpha|^2   \\
&=& \Big\|\sum_{\alpha \in F} a_\alpha f_\alpha\Big\|^2. \notag 
\eeq
Hence, $\Phi$ extends boundedly from $\mathscr H^2_{m}( \mathbb D^n_{_{\widetilde{P}}})$ into $\mathscr H^2_m(\triangle^{\!n}_{_P}).$ 
Since $P$ is admissible, 
$\widetilde{P}_j(z_j)=P_j(z),$ $z \in \mathbb C^n,$ and hence by \eqref{exp}, \eqref{A-tilde-A-new}, \eqref{tilde-kernel-exp} and Proposition~\ref{DoH}(ii), 
\beq \label{A-tilde-A}
A_{P, m}(\alpha) = \prod_{j=1}^nA_{\widetilde{P}_j, m_j}(\alpha_j), \quad \alpha \in \mathbb Z^n_+. 
\eeq
It follows that equality holds in \eqref{equality-holds} and $\Phi$ is a unitary map from $\mathscr H^2_{m}(\mathbb D^n_{_{\widetilde{P}}})$ onto $\mathscr H^2_m(\triangle^{\!n}_{_P}).$

Since $\{e_\alpha\}_{\alpha \in \mathbb Z^n_+}$ is an orthonormal basis for $\mathscr H^2_m(\triangle^{\!n}_{_P}),$ it suffices to check that 
 $\Psi(e_\alpha) = \Phi^{-1}(e_\alpha),$ $\alpha \in \mathbb Z^n_+.$ Indeed, 
 \allowdisplaybreaks
\beqn
\Psi(e_\alpha)(z) =  J_{_{\varphi^{-1}}}(z)  e_\alpha \circ \varphi^{-1}(z) &\overset{\eqref{phi-inverse}\&\eqref{e-alpha}}=&  
J_{_{\varphi^{-1}}}(z) \frac{\sqrt{A_{P, m}(\alpha)} \, z^{\alpha}}{\prod_{j=2}^nz^{j-1}_j} \\
& \overset{\eqref{Jaco}\&\eqref{A-tilde-A}}= & 
\prod_{j=1}^n \sqrt{A_{\widetilde{P}_j, m_j}(\alpha_j)} z^{\alpha}
\\
& \overset{\eqref{f-alpha-new}} = &
f_\alpha(z), \quad z \in \mathbb D^n_{_{\widetilde{P}}}, ~\alpha \in \mathbb Z^n_+.
\eeqn
This 
completes the proof of the first part. 
This also shows that
\beq \label{PSI-e-alpha-f-alpha}
\Psi(e_\alpha) = f_\alpha, \quad \alpha \in \mathbb Z^n_+.
\eeq
To see the remaining part, note that 
for any $\alpha \in \mathbb Z^n_+$ and $j=1, \ldots, n,$
\beqn
\Psi \mathscr M_{z_j}(e_\alpha) & \overset{\eqref{action-basis}} =& \frac{\sqrt{A_{P, m}(\alpha)}}{\sqrt{A_{P, m}(\alpha + \sum_{k=j}^n \varepsilon_k)}}\,f_{\alpha +  \sum_{k=j}^n \varepsilon_k} \\ & \overset{\eqref{A-tilde-A}} =& \frac{\prod_{k=j}^n \sqrt{A_{\widetilde{P}_k, m_k}(\alpha_k)}}{\prod_{k=j}^n \sqrt{A_{\widetilde{P}_k, m_k}(\alpha_k+1)}}\,f_{\alpha +  \sum_{k=j}^n \varepsilon_k} \\
& \overset{\eqref{action-tilde-Mz}\&\eqref{PSI-e-alpha-f-alpha}} = & \Big(\prod_{k=j}^n \widetilde{\mathscr M}_{z_k}\Big) \Psi(e_\alpha).
\eeqn
One may now apply Proposition~\ref{on-basis} to complete the proof. 
\end{proof}
The families of separately subnormal and jointly subnormal multiplication $n$-tuples $\widetilde{\mathscr M}_{z}$ on $\mathscr H^2_{m}( \mathbb D^n_{_{\widetilde{P}}})$ coincide provided $P$ is admissible. 
\begin{corollary} \label{sep-joint-sub}
Assume that $P$ is an admissible polynomial $n$-tuple. 
Then the multiplication $n$-tuple $\widetilde{\mathscr M}_{z}$ on $\mathscr H^2_{m}( \mathbb D^n_{_{\widetilde{P}}})$ is separately subnormal if and only if 
$\widetilde{\mathscr M}_{z}$ is jointly subnormal. In particular, if $\widetilde{\mathscr M}_{z}$ is separately subnormal, then the multiplication $n$-tuple $\mathscr M_z$ on $\mathscr H^2_m(\triangle^{\!n}_{_P})$ is jointly subnormal. 
\end{corollary}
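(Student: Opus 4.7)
The plan is to split the statement into its two halves and handle each using a structural tool already in place. The ``only if'' direction is immediate, since any jointly subnormal tuple is separately subnormal. For the ``if'' direction, the key observation is that Corollary~\ref{coro-d-comm} guarantees that $\widetilde{\mathscr M}_z$ is doubly commuting. I would then invoke the classical theorem of It\^o (see, e.g., the discussion of jointly subnormal tuples in \cite{AP1990}) stating that any doubly commuting $n$-tuple of subnormal operators is jointly subnormal. Under the standing admissibility of $P$, this forces joint subnormality of $\widetilde{\mathscr M}_z$ from separate subnormality, with no further computation.

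For the ``in particular'' assertion, my plan is to transport a joint normal extension of $\widetilde{\mathscr M}_z$ through the unitary intertwiner supplied by Proposition~\ref{Mz-tilde-Mz}, which gives $\Psi \mathscr M_{z_j} = \big(\prod_{k=j}^n \widetilde{\mathscr M}_{z_k}\big)\Psi$ for each $j=1,\ldots,n.$ Thus $\mathscr M_z$ is unitarily equivalent to the $n$-tuple $T = (T_1, \ldots, T_n)$ with $T_j = \prod_{k=j}^n \widetilde{\mathscr M}_{z_k}.$ Starting from a commuting normal extension $N = (N_1, \ldots, N_n)$ of $\widetilde{\mathscr M}_z$ on some Hilbert space $K \supseteq \mathscr H^2_m(\mathbb D^n_{_{\widetilde{P}}})$ (which exists by the first half), I would form the products $\prod_{k=j}^n N_k$ for $j=1,\ldots,n.$ Since commuting normal operators admit a joint spectral measure, these products remain mutually commuting normals, and each restricts on $\mathscr H^2_m(\mathbb D^n_{_{\widetilde{P}}})$ to the corresponding entry of $T.$ This would exhibit a joint normal extension of $T$, and hence of $\mathscr M_z.$

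The main obstacle is essentially bibliographic, namely pinning down It\^o's theorem in the exact doubly-commuting form; the rest amounts to bookkeeping with the unitary $\Psi$ together with the elementary observation that products of commuting normals are again normal. In particular, no moment-theoretic argument in the style of Proposition~\ref{joint-s} is needed here, the admissibility of $P$ being used only to invoke Proposition~\ref{Mz-tilde-Mz}.
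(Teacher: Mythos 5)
Your proof is correct and follows essentially the same route as the paper: the ``if'' direction via Corollary~\ref{coro-d-comm} together with the It\^o-type result for doubly commuting subnormal tuples (the paper cites \cite[Proposition~1]{AP1990} for exactly this), and the ``in particular'' part by transporting through the unitary of Proposition~\ref{Mz-tilde-Mz}. Your explicit construction of the normal extension of $\mathscr M_z$ from products of the commuting normals $N_k$ is a correct filling-in of a step the paper leaves implicit.
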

\begin{proof}
Since $\widetilde{\mathscr M}_z$ is doubly commuting (see Corollary~\ref{coro-d-comm}), if $\widetilde{\mathscr M}_{z}$ is separately subnormal, then by \cite[Proposition~1]{AP1990}, $\widetilde{\mathscr M}_{z}$ is jointly subnormal. The converse is trivial. The remaining part now follows  Proposition~\ref{Mz-tilde-Mz}.
\end{proof}

\subsection{Taylor spectrum} 

In this subsection, we compute the Taylor spectrum of $\mathscr M_z$ on $\mathscr H^2_m(\triangle^{\!n}_{_P})$ provided $P$ is an admissible polynomial $n$-tuple. This computation relies heavily on a result from \cite{CV1978} pertaining to the Taylor spectra of tensor products of commuting operator tuples on a Hilbert space.  

\begin{proposition} \label{prop-spectrum}
Assume that $P$ is an admissible polynomial $n$-tuple. Then the Taylor spectrum of $\mathscr M_z$ on $\mathscr H^2_m(\triangle^{\!n}_{_P})$ is equal to $\overline{\triangle^{\!n}_{_{P_{\bf r}}}}$ $($see Example~\ref{exam-Delta-P-r}$),$ 
where ${\bf r}=(r_1, \ldots, r_n)$ is given by 
\beq \label{rj}
r_j = \lim_{n \rar \infty} \sup_{k \Ge 0} \Bigg(\frac{\sqrt{A_{\widetilde{P}_j, m_j}(k)}}{ \sqrt{A_{\widetilde{P}_j, m_j}(k + n)}}\Bigg)^{\frac{1}{n}}, \quad j=1, \ldots, n
\eeq
$($see \eqref{tilde-kernel-exp}$).$ 
Moreover, $r_j \Le \frac{1}{\sqrt{\partial_j{P}_j(0)}},$ $j=1, \ldots, n.$
\end{proposition}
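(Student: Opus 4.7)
The plan is to transport the spectral computation from $\mathscr M_z$ to the doubly commuting tuple $\widetilde{\mathscr M}_z$ on $\mathscr H^2_m(\mathbb D^n_{_{\widetilde{P}}})$ via Proposition~\ref{Mz-tilde-Mz}, and then exploit the product structure of that space. First, since $P$ is admissible, identity \eqref{kernel-tilde-decom} shows that the reproducing kernel $\widetilde{\mathscr K}_{P,m}$ factors as a product of one-variable kernels. Hence $\mathscr H^2_m(\mathbb D^n_{_{\widetilde{P}}})$ is unitarily equivalent to the Hilbert-space tensor product
\[
\mathscr H^2_{m_1}(\mathbb D_{_{\widetilde{P}_1}}) \otimes \cdots \otimes \mathscr H^2_{m_n}(\mathbb D_{_{\widetilde{P}_n}}),
\]
and under this identification $\widetilde{\mathscr M}_{z_j}$ is carried to $I\otimes\cdots\otimes S_j\otimes\cdots\otimes I$, where $S_j$ denotes multiplication by the coordinate on $\mathscr H^2_{m_j}(\mathbb D_{_{\widetilde{P}_j}})$. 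By the one-variable specialization of \eqref{action-tilde-Mz}, each $S_j$ is a unilateral weighted shift with weights $\sqrt{A_{\widetilde{P}_j, m_j}(k)}/\sqrt{A_{\widetilde{P}_j, m_j}(k+1)}$, $k\in\mathbb Z_+$.

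Next, I would invoke the standard spectral description of a unilateral weighted shift to conclude that $\sigma(S_j)=\overline{\mathbb D}(0, r_j)$, where the spectral radius is
\[
r_j=\lim_{n\to\infty}\sup_{k\Ge 0}\bigl(w^{(j)}_k\,w^{(j)}_{k+1}\cdots w^{(j)}_{k+n-1}\bigr)^{1/n},
\]
which telescopes to exactly the quantity in \eqref{rj}. The tensor-product formula for the Taylor spectrum of commuting tuples \cite{CV1978} then gives
\[
\sigma(\widetilde{\mathscr M}_z)=\prod_{j=1}^n \sigma(S_j)=\prod_{j=1}^n \overline{\mathbb D}(0, r_j).
\]

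To conclude, observe that by Proposition~\ref{Mz-tilde-Mz} together with \eqref{phi-inverse}, the $n$-tuple $\mathscr M_z$ is unitarily equivalent to $\varphi^{-1}(\widetilde{\mathscr M}_z)$, where $\varphi^{-1}$ is the polynomial map in \eqref{phi-inverse}. Arguing as in the proof of Proposition~\ref{spectral-coro-new} via the spectral mapping property \cite[Corollary~3.5]{C1988} and using Example~\ref{exam-Delta-P-r} to identify the image, we obtain
\[
\sigma(\mathscr M_z)=\varphi^{-1}\Bigl(\prod_{j=1}^n \overline{\mathbb D}(0, r_j)\Bigr)=\overline{\triangle^{\!n}_{_{P_{\bf r}}}}.
\]
The estimate $r_j\Le 1/\sqrt{\partial_j P_j(0)}$ follows from $r_j=\mathrm{spr}(S_j)\Le\|S_j\|$ combined with \eqref{bdd-single-multi}.

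The main technical obstacle is the tensor-product identification: one must carefully verify that the unitary implementing the factorization of $\mathscr H^2_m(\mathbb D^n_{_{\widetilde{P}}})$ intertwines each $\widetilde{\mathscr M}_{z_j}$ with the corresponding elementary tensor $I\otimes\cdots\otimes S_j\otimes\cdots\otimes I$, so that the result of \cite{CV1978} applies in the required form. Once this bookkeeping is done, the remaining steps—computing the spectrum of a weighted shift and pushing the product spectrum through $\varphi^{-1}$—are routine.
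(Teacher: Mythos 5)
Your proposal is correct and follows essentially the same route as the paper's proof: factor the kernel of $\mathscr H^2_m(\mathbb D^n_{_{\widetilde{P}}})$ into one-variable kernels, identify $\widetilde{\mathscr M}_{z_j}$ with an elementary tensor of a weighted shift whose spectrum is $\overline{\mathbb D}(0,r_j)$ by Shields' theorem, apply the tensor-product spectral formula of \cite{CV1978}, and push the product spectrum through $\varphi^{-1}$ via the spectral mapping property and \eqref{image-polydisc}. The intertwining you flag as the main obstacle is handled in the paper exactly as you anticipate, via \cite[Theorem~5.11]{PR2016} and density of elementary tensors.
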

\begin{proof}
Let $j=1, \ldots, n.$ Consider the positive semi-definite kernel 
\beqn
\widetilde{\mathscr K}_{{P_j, m_j}}(s, t) = \frac{1}{\Big(1-\widetilde{P}_j(s  \overline{t})\Big)^{m_j}}, ~ s, t \in \mathbb D_{_{\widetilde{P}_j}}.
\eeqn
Let $\mathscr H^2_{m_j}(\mathbb D_{_{\widetilde{P}_j}})$ denote the reproducing kernel Hilbert space associated with $\widetilde{\mathscr K}_{{P_j, m_j}}$ and   
let $M_j$ denote the operator (possibly unbounded) of multiplication by the coordinate function in $\mathscr H^2_{m_j}(\mathbb D_{_{\widetilde{P}_j}}).$ 
By \eqref{action-tilde-Mz} (applied to $n=1$), the multiplication operator $M_j$ is the weighted shift on $\mathscr H^2_{m_j}(\mathbb D_{_{\widetilde{P}_j}})$  with the bounded weight sequence $$\Big\{\sqrt{A_{\widetilde{P}_j, m_j}(k)}/\sqrt{A_{\widetilde{P}_j, m_j}(k + 1)} : k \in \mathbb Z_+\Big\}$$ (see \eqref{bdd-single-multi}). It follows that $M_j$ is a bounded linear operator on $\mathscr H^2_{m_j}(\mathbb D_{_{\widetilde{P}_j}}).$  
In particular,  
\beq \label{pectrum}
\text{the spectrum of $M_j$ is the closed disc $\overline{\mathbb D}(0, r_j),$}
\eeq
where $r_j$ is given by \eqref{rj}  (see \cite[Theorem~4]{S1974}). 

Note that the reproducing kernel $\widetilde{\mathscr K}_{{P, m}}$ of $\mathscr H^2_m(\mathbb D^n_{_{\widetilde{P}}})$ takes the form
\beqn
\widetilde{\mathscr K}_{{P, m}}(z, w) \overset{\eqref{kernel-tilde-decom}}= \prod_{j=1}^n 
\widetilde{\mathscr K}_{{P_j, m_j}}(z_j, w_j), \quad z, w \in \mathbb D^n_{_{\widetilde{P}}}.
\eeqn
Hence, by repeated applications of \cite[Theorem~5.11]{PR2016}, 
there exists a well-defined, linear isometry $\mathcal U$ from $\otimes_{j=1}^n \mathscr H^2_{m_j}(\mathbb D_{_{\widetilde{P}_j}})$ onto the reproducing kernel Hilbert space $\mathscr H^2_m(\mathbb D^n_{_{\widetilde{P}}})$ such that
\beqn
\mathcal U(\otimes_{j=1}^n h_j)(z) = \prod_{j=1}^n h_j(z_j), \quad h_j \in \mathscr H^2_{m_j}(\mathbb D_{_{\widetilde{P}_j}}), ~z = (z_1, \ldots, z_n) \in \mathbb D^n_{_{\widetilde{P}}},
\eeqn
where $\otimes$ denotes the Hilbert space tensor product. 
Consider the commuting $n$-tuple $\widetilde{M}=(\widetilde{M}_1, \ldots, \widetilde{M}_n),$  where $\widetilde{M}_j$ acting on $\otimes_{j=1}^n \mathscr H^2_{m_j}(\mathbb D_{_{\widetilde{P}_j}})$  is given by
\beqn
\widetilde{M}_j = I \otimes  \cdots I \otimes \underbrace{M_j}_{\text{\tiny jth place}} \otimes I \cdots \otimes I, \quad j=1, \ldots, n.
\eeqn
It is easy to see that 
\beqn
\mathcal U \widetilde{M}_j (\otimes_{k=1}^n h_k)
= \widetilde{\mathscr M}_{z_j} \mathcal U (\otimes_{k=1}^n h_k), \quad h_k \in\mathscr H^2_{m_k}(\mathbb D_{_{\widetilde{P}_k}}), ~k, j=1, \cdots, n.
\eeqn
Since linear span of $\big\{\otimes_{k=1}^n h_k : h_k \in\mathscr H^2_{m_k}(\mathbb D_{_{\widetilde{P}_k}})\big\}$ is dense in $\otimes_{j=1}^n \mathscr H^2_{m_j}(\mathbb D_{_{\widetilde{P}_j}})$ (see \cite[Proof of Theorem~5.11]{PR2016}), we conclude that $\widetilde{M}$ and $\widetilde{\mathscr M}_z$ are unitarily equivalent. Hence, by \cite[Theorem~2.2]{CV1978}, 
\beq 
\label{phi-spectrum-tilde}
\sigma(\widetilde{\mathscr M}_z) = \sigma(\widetilde{M}) =  \sigma(M_1) \times \cdots \times \sigma(M_n). 
\eeq
Since $P$ is an admissible polynomial $n$-tuple, 
by Proposition~\ref{Mz-tilde-Mz}, $\sigma(\mathscr M_z)=\sigma(\varphi^{-1}(\widetilde{\mathscr M}_{z}))$ (see \eqref{phi-inverse}). It now follows from \eqref{phi-spectrum-tilde} and the spectral mapping property that 
\allowdisplaybreaks
\beqn
\sigma(\mathscr M_z) 
= \varphi^{-1}\Big(\sigma(M_1) \times \cdots \times \sigma(M_n)\Big). 
\eeqn
This combined with \eqref{image-polydisc} and \eqref{pectrum} completes the proof of the first part. The rest follows from \eqref{bdd-single-multi} and the fact that the spectral radius of a bounded linear operator is at most its norm.  
\end{proof}

For an admissible polynomial $n$-tuple $P,$ the Taylor spectrum of a  separately hyponormal multiplication $n$-tuple $\mathscr M_z$ on
$\mathscr H^2_{m}(\triangle^{\!n}_{_P})$ can be computed  explicitly.  
\begin{corollary} \label{spec-precise}
Assume that $P$ is an admissible polynomial $n$-tuple and let $\mathscr M_z$ be the multiplication $n$-tuple on $\mathscr H^2_{m}(\triangle^{\!n}_{_P}).$ If $\mathscr M_z$ is separately hyponormal, then   
$\sigma(\mathscr M_z)=\overline{\triangle^{\!n}_{_{P_{\bf r}}}},$ where ${\bf r}=\varphi(\|\mathscr M_{z_1}\|, \ldots, \|\mathscr M_{z_n}\|)$ $($see \eqref{tilde}$).$  If, in addition, $m$ is the $n$-tuple with all entries equal to $1,$ then $\sigma(\mathscr M_z)=\overline{\triangle^{\!n}_{_{P_{\bf r}}}},$ 
where ${\bf r}=(r_1, \ldots, r_n)$ with $r_j=\frac{1}{\sqrt{\partial_j P_j(0)}},$ $j=1, \ldots, n.$
\end{corollary}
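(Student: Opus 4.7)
The plan is to combine Proposition~\ref{prop-spectrum} with the projection property of the Taylor spectrum (\cite[Theorem~4.9]{C1988}) and the identity $r(T)=\|T\|$ for hyponormal operators $T.$ By Proposition~\ref{prop-spectrum}, we already know $\sigma(\mathscr M_z)=\overline{\triangle^{\!n}_{_{P_{\bf r}}}}$ for the $n$-tuple ${\bf r}=(r_1,\ldots,r_n)$ given by \eqref{rj}, so the only remaining task is to re-express $\bf r$ in terms of the operator norms $\|\mathscr M_{z_j}\|.$

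From the description of $\triangle^{\!n}_{_{P_{\bf r}}}$ in Example~\ref{exam-Delta-P-r}, the projection of $\overline{\triangle^{\!n}_{_{P_{\bf r}}}}$ onto the $j$-th coordinate is the closed disc $\overline{\mathbb D}(0,\prod_{k=j}^n r_k).$ The projection property of the Taylor spectrum therefore yields
$$\sigma(\mathscr M_{z_j})=\overline{\mathbb D}\Big(0,\,\prod_{k=j}^n r_k\Big),\quad j=1,\ldots,n,$$
so the spectral radius of $\mathscr M_{z_j}$ equals $\prod_{k=j}^n r_k.$ Since $\mathscr M_z$ is separately hyponormal, each $\mathscr M_{z_j}$ is a hyponormal operator, and the identity $r(T)=\|T\|$ upgrades this to $\|\mathscr M_{z_j}\|=\prod_{k=j}^n r_k.$ A telescoping calculation with the definition of $\varphi$ in \eqref{tilde} then gives $\varphi(\|\mathscr M_{z_1}\|,\ldots,\|\mathscr M_{z_n}\|)=(r_1,\ldots,r_n)={\bf r},$ proving the first assertion.

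For the second assertion, I would invoke Corollary~\ref{norm-attained}(ii). The chain $\mathcal P_l=\mathcal P_k\subseteq\mathcal P_{k-1}\subseteq\cdots\subseteq\mathcal P_1$ in Remark~\ref{rmk-d-admissible} (with $k=\max\{\deg P_1,\ldots,\deg P_n\}$) shows that admissibility implies $n$-admissibility in every dimension, so Corollary~\ref{norm-attained}(ii) applies under $m={\bf 1}$ and gives $\|\mathscr M_{z_j}\|=1/\sqrt{\prod_{l=j}^n \partial_l P_l(0)}.$ Substituting into the relations $r_j=\|\mathscr M_{z_j}\|/\|\mathscr M_{z_{j+1}}\|$ (for $j<n$) and $r_n=\|\mathscr M_{z_n}\|$ obtained above yields $r_j=1/\sqrt{\partial_j P_j(0)},$ as claimed. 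No serious obstacle is present: the hard work already sits inside Proposition~\ref{prop-spectrum}, while separate hyponormality enters only to convert spectral radii into operator norms via $r(\mathscr M_{z_j})=\|\mathscr M_{z_j}\|.$
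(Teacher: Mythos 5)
Your proposal is correct and follows essentially the same route as the paper's proof: invoke Proposition~\ref{prop-spectrum} to get $\sigma(\mathscr M_z)=\overline{\triangle^{\!n}_{_{P_{\bf r}}}}$, use the projection property to identify $\sigma(\mathscr M_{z_j})$ with the closed disc of radius $\prod_{k=j}^n r_k$, convert spectral radius to norm via hyponormality, and finish the $m={\bf 1}$ case with Corollary~\ref{norm-attained}(ii). The telescoping via $\varphi$ and the observation that admissibility gives $n$-admissibility are exactly the (implicit) steps in the paper's argument.
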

\begin{proof} Fix $j=1, \ldots, n.$
By Proposition~\ref{prop-spectrum}, there exists an $n$-tuple $\bf r$ of positive real numbers $r_1, \ldots, r_n$ such that  
\beq \label{spectrum-gen-H-T}
\sigma(\mathscr M_z) = \overline{\triangle^{\!n}_{_{P_{\bf r}}}}.
\eeq
This combined with the projection property of the Taylor spectrum (see \cite[Theorem~4.9]{C1988}) shows that 
\beqn
\sigma(\mathscr M_{z_j}) = \{w \in \mathbb C : \mathrm{there ~exists}~z \in \overline{\triangle^{\!n}_{_{P_{\bf r}}}}~\mathrm{with~}z_j=w\},
\eeqn
which is easily seen to be equal to the closed disc centered at the origin and of radius $\prod_{k=j}^nr_k.$  
On the other hand, by the assumption, $\mathscr M_{z_j}$ is hyponormal. Hence, by \cite[Proposition~II.4.6]{Co1991}, 
\beqn 
\|\mathscr M_{z_j}\|=r(\mathscr M_{z_j}) = \prod_{k=j}^nr_k.
\eeqn
This together with \eqref{phi-inverse} and \eqref{spectrum-gen-H-T} yields the first part. The remaining part now follows from Corollary~\ref{norm-attained}(ii). 
\end{proof}

\section{Closing remarks} \label{S9}
The aim of this work was to introduce generalized Hartogs triangles $\triangle^{\!n}_{_P}$ and to provide a theoretical framework for discussing operator theory on these domains.
One of the main outcomes of all this analysis is the von Neumann's inequality for the $n$-dimensional Hartogs triangle.  Following \cite{AMY2020}, it is natural to seek a counterpart of dilation theory, model theory and function theory on these domains. 
In this regard, the following, a simple consequence of the corresponding result for the unit bidisc (see \cite[Theorem~4.49]{AMY2020}), is worth noting. 
\begin{proposition}[Pick’s interpolation theorem on the Hartogs triangle]  
\label{Pick}
Let $\lambda_j =(\lambda^{(j)}_1 , \lambda^{(j)}_2),$ $j=1, \ldots, k,$ be distinct points in $\triangle^{\!2}_0$ and let $z_1, \ldots, z_k \in \mathbb C.$ There exists $\psi \in \mathscr H^{\infty}(\triangle^{\!2}_0)$ with $\|\psi\|_{\infty, \triangle^{\!2}_0} \Le 1$ such that $\psi(\lambda_j) = z_j$ for $j = 1, \ldots, k$ if and only if there exists a pair of $k \times k$ positive semi-definite matrices $A_1 = [a^{(1)}_{i, j}]$ and $A_2 = [a^{(2)}_{i, j}]$ such that
for $i,j = 1, \ldots, k,$
\beqn
1-\overline{z}_i z_j = \Big({\overline{\lambda^{(2)}_i}}{{\lambda^{(2)}_j}} - \overline{\lambda^{(1)}_i} {{\lambda^{(1)}_j}}\Big) a^{(1)}_{i, j} + 
\Big(1-\overline{\lambda^{(2)}_i} \lambda^{(2)}_j\Big)a^{(2)}_{i, j}.
\eeqn
\end{proposition}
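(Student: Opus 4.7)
The plan is to transfer the interpolation problem on $\triangle^{\!2}_0$ to the bidisc $\mathbb D^2$ via the biholomorphism $\varphi$ from \eqref{tilde} and then invoke the classical Pick theorem for the bidisc. The map $\varphi(z_1,z_2) = (z_1/z_2,\,z_2)$ is a biholomorphism from $\triangle^{\!2}_0$ onto $\mathbb D \times \mathbb D_*,$ so $\psi \mapsto \phi := \psi \circ \varphi^{-1}$ is an isometric algebra isomorphism from $\mathscr H^{\infty}(\triangle^{\!2}_0)$ onto $\mathscr H^{\infty}(\mathbb D \times \mathbb D_*).$ By Riemann's removable-singularity theorem in several complex variables, every bounded holomorphic function on $\mathbb D \times \mathbb D_*$ extends uniquely across the codimension-one analytic set $\mathbb D \times \{0\}$ to a bounded holomorphic function on $\mathbb D^2$ of the same sup norm, so this isomorphism in fact realizes $\mathscr H^{\infty}(\triangle^{\!2}_0)$ isometrically as $H^{\infty}(\mathbb D^2).$ Setting $\mu_j := \varphi(\lambda_j) = (\lambda^{(j)}_1/\lambda^{(j)}_2,\,\lambda^{(j)}_2)\in \mathbb D \times \mathbb D_*,$ the existence of a $\psi$ in the closed unit ball of $\mathscr H^{\infty}(\triangle^{\!2}_0)$ with $\psi(\lambda_j) = z_j$ is therefore equivalent to the existence of $\phi \in H^{\infty}(\mathbb D^2)$ with $\|\phi\|_{\infty} \le 1$ and $\phi(\mu_j) = z_j.$

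By \cite[Theorem~4.49]{AMY2020} (Pick's theorem on the bidisc), such $\phi$ exists if and only if there are $k \times k$ positive semi-definite matrices $B_1 = [b^{(1)}_{i,j}]$ and $B_2 = [b^{(2)}_{i,j}]$ satisfying
\begin{equation*}
1 - \overline{z}_i z_j \;=\; \bigl(1 - \overline{(\mu_i)_1}(\mu_j)_1\bigr)\, b^{(1)}_{i,j} \;+\; \bigl(1 - \overline{(\mu_i)_2}(\mu_j)_2\bigr)\, b^{(2)}_{i,j}.
\end{equation*}
Substituting the explicit values of $\mu_i$ and $\mu_j$ and clearing the denominator in the first factor gives
\begin{equation*}
1 - \overline{(\mu_i)_1}(\mu_j)_1 \;=\; \frac{\overline{\lambda^{(i)}_2}\lambda^{(j)}_2 - \overline{\lambda^{(i)}_1}\lambda^{(j)}_1}{\overline{\lambda^{(i)}_2}\,\lambda^{(j)}_2}, \qquad 1 - \overline{(\mu_i)_2}(\mu_j)_2 \;=\; 1 - \overline{\lambda^{(i)}_2}\lambda^{(j)}_2,
\end{equation*}
so the bidisc identity rewrites in exactly the shape of the identity stated in the proposition after the substitution $a^{(1)}_{i,j} := b^{(1)}_{i,j}/(\overline{\lambda^{(i)}_2}\lambda^{(j)}_2)$ and $a^{(2)}_{i,j} := b^{(2)}_{i,j}.$

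To close the argument I need to check that positivity is preserved under this rescaling. Since $\lambda^{(j)}_2 \ne 0$ for every $j$ (as $\lambda_j \in \triangle^{\!2}_0 \subset \mathbb C \times \mathbb C_*$), the rank-one matrix $\bigl[1/(\overline{\lambda^{(i)}_2}\lambda^{(j)}_2)\bigr] = [\overline{v_i}\,v_j]$ with $v_j := 1/\lambda^{(j)}_2$ is positive semi-definite, so by the Schur product theorem $A_1 = [a^{(1)}_{i,j}]$ is positive semi-definite whenever $B_1$ is. The converse direction is handled by reversing the rescaling via $b^{(1)}_{i,j} = (\overline{\lambda^{(i)}_2}\lambda^{(j)}_2)\,a^{(1)}_{i,j},$ again positivity-preserving by the Schur product theorem. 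The proof is thus essentially a bookkeeping exercise: the only analytic input beyond the transfer by $\varphi$ is the Riemann extension used to identify $\mathscr H^{\infty}(\mathbb D \times \mathbb D_*)$ with $H^{\infty}(\mathbb D^2),$ and the only algebraic subtlety is the invocation of the Schur product theorem to transport positivity across the $\overline{\lambda^{(i)}_2}\lambda^{(j)}_2$-rescaling; there is no genuine obstacle to overcome.
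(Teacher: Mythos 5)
Your proposal is correct and follows exactly the route of the paper's proof, which simply transfers the problem to the bidisc via $\varphi$ and cites \cite[Theorem~4.49]{AMY2020}; you have merely written out the details the paper leaves implicit (the removable-singularity identification of $\mathscr H^{\infty}(\mathbb D\times\mathbb D_*)$ with $H^{\infty}(\mathbb D^2)$ and the Schur-product argument showing the rescaling by $[1/(\overline{\lambda^{(i)}_2}\lambda^{(j)}_2)]$ preserves positive semi-definiteness). No gaps.
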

\begin{proof}
Since $\varphi,$ as given by \eqref{tilde}, maps $\triangle^{\!2}_0$ biholomorphically onto $\mathbb D \times \mathbb D_*,$ the result may be easily deduced from \cite[Theorem~4.49]{AMY2020}.
\end{proof}
Needless to say, one can also get an analogue of \cite[Theorem~4.56]{AMY2020} for the Hartogs triangle. 
This raises, in particular, the question for which positive regular polynomial $2$-tuples $P,$ the generalized Hartogs triangle $\triangle^{\!2}_{_P}$ has the Pick’s interpolation.  

In Subsection~\ref{S5.3}, we discussed the notion of $\triangle^{\!n}_{_P}$-isometry (see Definition~\ref{def-D-contraction}). Below are some simple but basic examples of $\triangle^{\!n}_{_0}$-isometries. 
\begin{example} \label{last-ex}
Let $\phi \in \mathscr H^{\infty}(\mathbb D)$ and $\psi$ be an invertible element of $\mathscr H^{\infty}(\mathbb D).$ If $\frac{\phi}{\psi}$ is an inner function, then by
\cite[Proposition~V.2.2]{SFBK2010}, 
$\mathscr M_\phi^* \mathscr M_\phi=\mathscr M^*_\psi \mathscr M_\psi,$ and hence by Remark~\ref{rmk-H-triangle}, 
$(\mathscr M_\phi, \mathscr M_\psi)$ is a $\triangle^{\!2}_{_0}$-isometry and 
$(\mathscr M_z, \mathscr M_\phi, \mathscr M_\psi)$ is a $\triangle^{\!3}_{_0}$-isometry,
where $\mathscr M_{f} \in \mathcal B(\mathscr H^2(\mathbb D))$ denotes the operator of multiplication by $f \in \mathscr H^{\infty}(\mathbb D).$ It is also worth mentioning that the commuting $2$-tuple $(\mathscr M_\phi, \mathscr M_z)$ is a $\triangle^{\!2}_{_0}$-isometry. 
Indeed, if $T=(T_1, T_2)$ is a commuting $2$-tuple and $T_2$ is an isometry, then  
\beqn
T^*_2\big(T^*_2T_2-T^*_1T_1\big)T_2 = I - T^*_1T_1=T^*_2T_2-T^*_1T_1,
\eeqn
and hence $T$ is a $\triangle^{\!2}_{_0}$-isometry. 
\eof 
\end{example}
In addition to the previous example, we would like to draw attention to the following connection between the notions of $\triangle^{\!n}_{_0}$-contractivity and hypercontractivity (see \cites{A1985, AS2001} for the necessary definitions)$:$ 
\begin{align} \label{connection}
&\mbox{\it If $T$ is a toral $n$-hypercontraction $($resp. a toral $n$-isometry$),$ then} &\\ \notag 
& \mbox{\it the commuting $n$-tuple $\varphi^{-1}(T)$ is a $\triangle^{\!n}_{_0}$-contraction $($resp. a $\triangle^{\!n}_{_0}$-isometry$).$}&
\end{align} 
This fact is a consequence of Agler's hereditary functional calculus (see \cite[Chapter~4]{AMY2020}) and \eqref{phi-inverse}.
In view of Example~\ref{last-ex} and the preceding discussion, 
one may ask for an analog of the von Neumann-Wold decomposition (see \cite{Co1991}). 
We hope to explore these and related issues in a sequel to this paper.

{}
\end{document}